\tikzset{my loop/.style =  {to path={
  \pgfextra{}
  [looseness=12,min distance=6mm]
  \tikz@to@curve@path},font=\sffamily\small
  }}  
\newtheorem{theorem}{Theorem}[section]
\newtheorem{lemma}[theorem]{Lemma}
\newtheorem{cor}[theorem]{Corollary}
\newtheorem{proposition}[theorem]{Proposition}
\theoremstyle{definition}
\newtheorem{df}{Definition}[section]
\theoremstyle{remark}
\newtheorem{rem}{Remark}[section]
\newtheorem{ques}{Question}[section]
\newcommand*{\R}{\mathbb{R}}
\newcommand*{\Z}{\mathbb{Z}}
\newcommand*{\N}{\mathbb{N}}
\newcommand*{\C}{\mathbb{C}}
\newcommand*{\F}{\mathcal{F}}
\newcommand*{\G}{\mathcal{G}}
\newcommand*{\ind}{\text{ind}}
\newcommand*{\eps}{\varepsilon}
\title{Deterministic polynomial-time approximation algorithms for partition functions and graph polynomials\footnote{An extended abstract of this work has been accepted in Eurocomb 2017.}}
\author{Viresh Patel\footnote{Korteweg de Vries Institute for Mathematics, University of Amsterdam. Email: \texttt{vpatel@uva.nl}. Supported by the Netherlands Organisation for Scientific Research (NWO) through the Gravitation Programme Networks (024.002.003).} \and Guus Regts\footnote{Korteweg de Vries Institute for Mathematics, University of Amsterdam. Email: \texttt{guusregts@gmail.com}. Supported by a personal NWO Veni grant}.}
\begin{document}
\maketitle
\abstract{In this paper we show a new way of constructing deterministic polynomial-time approximation algorithms for computing complex-valued evaluations of a large class of graph polynomials on bounded degree graphs. In particular, our approach works for the Tutte polynomial and  independence polynomial, as well as partition functions of complex-valued spin and edge-coloring models.

More specifically, we define a large class of graph polynomials $\mathcal C$ and show that if $p\in \cal C$ and there is a disk $D$ centered at zero in the complex plane such that $p(G)$ does not vanish on $D$ for all bounded degree graphs $G$, then for each $z$ in the interior of $D$ there exists a deterministic polynomial-time approximation algorithm for evaluating $p(G)$ at $z$.
This gives an explicit connection between absence of zeros of graph polynomials and the existence of efficient approximation algorithms, allowing us to show new relationships between well-known conjectures.

Our work builds on a recent line of work initiated by Barvinok \cite{B14a, B14b, B15, B16}, which provides a new algorithmic approach besides the existing Markov chain Monte Carlo method and the correlation decay method for these types of problems.

\noindent \begin{footnotesize}
Keywords: approximation algorithms, Tutte polynomial, independence polynomial, partition function, graph homomorphism, Holant problem.

\noindent MSC: 68W25 (Primary) 05C31,  (Secondary). 
\end{footnotesize}
}

\section{Introduction}
Computational counting is an important area of computer science where one seeks to find efficient algorithms to count certain combinatorial objects such as independent sets, proper colorings, or matchings in a graph. More generally, each combinatorial counting problem has an associated generating function, namely the independence polynomial for independent sets, the chromatic and more generally Tutte polynomial for proper graph colorings, and the matching polynomial for matchings. Such graph polynomials are studied in mathematics and computer science, but also  in statistical physics where they are normally referred to as partition functions. A fundamental question asks for which graphs and at which numerical values one can approximately evaluate these polynomials efficiently.
Indeed the counting problems correspond to evaluating these graph polynomials or partition functions at particular values.

Many of these counting problems
are known to be computationally hard in the sense of being \#P-hard, even when one restricts to graphs of maximum degree at most three \cite{BDGJ99,DG00a}.
On the other hand several efficient randomized approximation algorithms exist for some of these \#P-hard problems via the use of the powerful Markov chain Monte Carlo technique. In a major breakthrough, Weitz \cite{W6}, inspired by ideas from statistical physics, developed the so-called correlation decay method allowing him to obtain the first efficient deterministic approximation algorithm for counting independent sets in graphs of maximum degree at most five. (One expects no such algorithm for graphs of maximum degree larger than five \cite{SS12}, while previously the best known (randomized) algorithm worked only for graphs of maximum degree at most four.)
The correlation decay method has subsequently been refined and applied to various other problems; see e.g. \cite{BGKNT7,GK12,LY13,SST14} and references therein.

In this paper we consider a different approach.
The approach is quite robust in that it can be applied to a large class of graph polynomials and gives the first general polynomial-time method to approximate graph polynomials at complex values for bounded degree graphs. Very recently complex evaluations have also been considered by Harvey, Srivastava, and Vondr{\'a}k~\cite{HSV16} for the special case of the independence polynomial.
Complex evaluations of graph polynomials, aside from being the natural extensions of real evaluations, arise as interesting counting problems e.g.\ counting restricted tensions or flows can be modelled as the partition functions of a complex spin system (see \cite{GGN14}) and the number of homomorphisms into any fixed graph can be modelled as the partition function of a complex edge-coloring model (see \cite{S7,S10}).

A further important aspect of our work is to highlight the explicit relation between the (absence of complex) roots of a graph polynomial and efficient algorithms to evaluate it. Indeed, in Remark~\ref{re:sokal} below we give the explicit connection between a conjecture of Sokal on zero-free regions of the chromatic polynomial and the notorious algorithmic problem of efficiently approximating the number of proper colorings in a bounded degree graph.

Our approach combines a number of ingredients including ideas from sparse graph limits \cite{CF12}, results on the locations of zeros of graph polynomials and partition functions \cite{S98,SS05,JPS13,BS14a,BS14b,R15} and an algorithmic development due to Barvinok \cite{B14a}.
The Taylor approximation technique of Barvinok has been used to construct deterministic quasi-polynomial-time approximation algorithms 
for evaluating a number of graph partition functions (for general graphs); see e.g.\ work by Barvinok \cite{B14a,B14b,B15,B16}, by Barvinok and Sobe\'ron \cite{BS14a,BS14b}, and by the second author \cite{R15}. We refer to Barvinok's recent book \cite{Babook} for more background.

The approach can be roughly described as follows. 
First the problem of evaluating the partition function or graph polynomial is cast as the evaluation of a univariate polynomial. Next, a region is identified where this polynomial does not vanish; hence in this region the logarithm of the polynomial is well-approximated by a low-order Taylor approximation (of order $\log n$, where $n$ in the degree of the polynomial). Finally we must compute this Taylor approximation by efficiently computing the first $O(\log n)$ coefficients of the polynomial. 
So far this approach has only resulted in algorithms that run in quasi-polynomial time.
The main technical contribution of the present paper is a polynomial-time algorithm for computing (essentially) the first $O(\log n)$ coefficients of a large class of graph polynomials whenever we work with bounded degree graphs cf.\ Theorem~\ref{thm:compute coef}, and we believe it to be of independent interest.

Below we shall state and discuss some concrete results that can be obtained by combining this approach with (known) results on the location of roots of graph polynomials and partition functions.
In particular, we obtain new deterministic polynomial-time algorithms (FPTAS) for evaluating the independence polynomial, the Tutte polynomial, and computing partition functions of spin and edge-coloring models in the case of bounded degree graphs.
Before we state our algorithmic results, we first need a definition.
Since we will approximate polynomials at complex values, we define what it means to be a good approximation.
\begin{df}
Let $q$ and $\xi$ be a non-zero complex numbers. We call $\xi$ a \emph{multiplicative $\eps$-approximation to $q$} if 
$e^{-\eps}\leq |q|/|\xi|\leq e^{\eps}$ and if the angle between $\xi$ and $q$ (as seen as vectors in $\C=\R^2$) is at most $\eps$.
\end{df}

\subsection{The independence polynomial}\label{subsec:independence}
The \emph{independence polynomial} of a graph $G=(V,E)$ is denoted by $Z(G)$ and is defined as
\begin{equation}\label{eq:def ind pol}
Z(G)(\lambda):=\sum_{\substack{I\subseteq V\\ I \text{ independent}}}\lambda^{|I|}.
\end{equation}
In \cite{W6} Weitz proved, based on the correlation decay method, that if $0\leq \lambda<\lambda_c$,
where
\[
\lambda_c=\frac{(\Delta-1)^{\Delta-1}}{(\Delta-2)^\Delta},
\]
then there exists a deterministic algorithm, which given a graph $G=(V,E)$ of maximum degree at most $\Delta$ and $\eps>0$, computes a multiplicative $\eps$-approximation to $Z(G)(\lambda)$ in time $(|V|/\eps)^{O(1)}$.
Sly and Sun \cite{SS12} proved this is tight by showing that, as soon as $\lambda>\lambda_c$, one cannot efficiently approximate $Z(G,\lambda)$ unless NP=RP.

In Section \ref{sec:independence} we prove the following result, which has been independently obtained by Harvey, Srivastava and Vondr\'ak \cite{HSV16} using the correlation decay method.
\begin{theorem}\label{thm:ind general}
Let $\Delta\in \N$ and let $\lambda\in \C$ be such that $|\lambda|<\lambda^*(\Delta):=\frac{(\Delta-1)^{\Delta-1}}{\Delta^{\Delta}}$. Then there exists a deterministic algorithm, which, given a graph $G=(V,E)$ of maximum degree at most $\Delta$ and $\eps>0$, computes a multiplicative $\eps$-approximation to $Z(G)(\lambda)$ in time $(|V|/\eps)^{O(1)}$.
\end{theorem}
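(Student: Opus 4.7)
The plan is to apply Barvinok's interpolation method, which requires two ingredients: (i) a disk on which $Z(G)(\lambda)$ does not vanish for all graphs of maximum degree at most $\Delta$, of radius at least $\lambda^*(\Delta)$, and (ii) an efficient way to compute the first $O(\log(|V|/\eps))$ coefficients of $Z(G)$. For ingredient (i) I would invoke Shearer's classical theorem (see Scott--Sokal \cite{SS05} for a modern treatment): for any graph $G$ of maximum degree at most $\Delta$, the independence polynomial $Z(G)(\lambda)$ has no zeros with $|\lambda|<\lambda^*(\Delta)=(\Delta-1)^{\Delta-1}/\Delta^{\Delta}$. This bound is sharp, which is precisely why the radius appearing in the statement matches $\lambda^*(\Delta)$.

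Given a target $\lambda_0$ with $|\lambda_0|<\lambda^*(\Delta)$, set $r:=\lambda^*(\Delta)/|\lambda_0|>1$ and consider the univariate polynomial $f(t):=Z(G)(t\lambda_0)$ of degree $n:=|V(G)|$, which is zero-free on the closed disk $|t|\le r$ and satisfies $f(0)=1$. The branch $g(t):=\log f(t)$ with $g(0)=0$ is then analytic on $|t|\le r$, and the standard Barvinok truncation estimate gives
\[
\Bigl|\log f(1)-\sum_{k=1}^{K}\tfrac{g^{(k)}(0)}{k!}\Bigr|\;\le\;\tfrac{n}{K(r-1)}\,r^{-K},
\]
which drops below $\eps$ for $K=C\log(n/\eps)$, with $C$ depending only on $r-1$, hence only on $\lambda_0$ and $\Delta$. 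Exponentiating the truncated sum yields a multiplicative $\eps$-approximation of $Z(G)(\lambda_0)$. The Taylor coefficients $g^{(k)}(0)/k!$ are recovered from the first $K$ coefficients of $f$, i.e.\ from the independent-set counts $i_1(G),\dots,i_K(G)$, by Newton's identities, so the problem reduces to computing these counts.

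The crux is to carry out this last step in polynomial (rather than the naive quasi-polynomial) time. This is precisely what the main technical result, Theorem~\ref{thm:compute coef}, is designed to do, so my plan is to verify that $Z$ lies in the class $\mathcal{C}$ and then invoke that theorem as a black box. The informal reason to expect this step to go through is that $\log Z(G)(\lambda)$ admits a cluster expansion whose $k$-th coefficient is a weighted sum over connected induced subgraphs of $G$ on at most $k$ vertices; in a graph of maximum degree $\Delta$ there are at most $n(e\Delta)^k$ such subgraphs (rooted at a vertex), which for $k=O(\log n)$ is polynomial and can in principle be enumerated efficiently. The main obstacle is therefore to check that the multiplicativity of $Z$ over disjoint unions and the local structure of its coefficients fit the hypotheses of Theorem~\ref{thm:compute coef}; once this is in place, combining the efficient coefficient computation with Shearer's zero-freeness and the Barvinok truncation estimate gives the claimed FPTAS.
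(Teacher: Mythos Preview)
Your proposal is correct and follows essentially the same route as the paper: Shearer's zero-freeness, Barvinok's Taylor truncation (the paper's Corollary~\ref{cor:approx}), and an appeal to Theorem~\ref{thm:compute coef} to compute the low-order coefficients in polynomial time. The only step you leave implicit is the verification that $Z$ is a BIGCP, which in the paper is immediate: the $k$th coefficient of $Z(G)$ equals $\ind(\bullet^k,G)$ where $\bullet^k$ is the edgeless graph on $k$ vertices, so one may take $\alpha=\beta=1$ in Definition~\ref{df:bigcp}; your cluster-expansion heuristic is not needed here (that work is already packaged inside the proof of Theorem~\ref{thm:compute coef}).
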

\begin{rem}
From the proof of Theorem~\ref{thm:ind general} it follows that the running time is in fact bounded by 
\[
(|V|/\eps)^{\frac{D}{1 -|\lambda|/\lambda^*(\Delta)}\ln( \Delta) }|V|^{D'}
\]
for some absolute constants $D, D'$.
\end{rem}
Theorem~\ref{thm:ind general} in fact also applies to the multivariate independence polynomial, as we will briefly explain in Subsection~\ref{subsec:mult}.

For positive valued $\lambda$ our result is weaker than Weitz's result since $\lambda_c > \lambda^*$. However our result works for negative\footnote{In an unpublished note \cite{Sr15} Srivastava notes that the correlation decay method of Weitz in fact also applies to negative $\lambda$ as long as $\lambda>-\lambda^*$.}  and even complex  $\lambda$.
The case $\lambda<0$ is relevant due to its connection to the Lov\'asz local lemma, cf.\ \cite{SS05}.
We remark here that by very recent results of Peters and the second author \cite{PR17} confirming a conjecture of Sokal \cite{S1}, and by the method from Subsection~\ref{subsec:clawfree} below, we are able to obtain an alternative proof of Weitz's result. We say more about this in Section~\ref{sec:conclude}.

The value $\lambda^*$ in Theorem \ref{thm:ind general} originates from a paper of  Shearer \cite{S98} (see also Scott and Sokal \cite{SS05}) where it is shown that for graphs of maximum degree $\Delta$, the independence polynomial does not vanish at any $\lambda \in \mathbb{C}$ satisfying $|\lambda|\leq \lambda^*$. 
Also the value of $\lambda^*$ is tight, as there exists a sequence of trees $T_n$ of maximum degree at most $\Delta$ and $\lambda_n<-\lambda^*$ with $\lambda_n\to -\lambda^*$ such that $Z(T_n,\lambda_n)=0$, cf.\ \cite[Example 3.6]{SS05}.
Theorem~\ref{thm:ind general} is also tight, as very recently, Galanis, Goldberg and \v{S}tefankovi\v{c} showed that it is NP-hard to approximate $Z_G(\lambda)$ when $\lambda<-\lambda^*$.

As an extension to Theorem \ref{thm:ind general}, we are able to efficiently approximate the independence polynomial on almost the entire complex plane for the special class of claw-free graphs. We make use of a result of Chudnovsky and Seymour \cite{CS7}
stating that the independence polynomial of a claw-free graph has only negative real roots. 
We prove the following result in Subsection \ref{subsec:clawfree}.
\begin{theorem}\label{thm:ind claw free}
Let $\Delta\in \N$ and let $\lambda\in \C$ be such that $\lambda$ is not a real negative number. Then there exists a deterministic algorithm, which, given a claw-free graph $G=(V,E)$ of maximum degree at most $\Delta$ and $\eps>0$, computes a multiplicative $\eps$-approximation to $Z(G)(\lambda)$ in time $(|V|/\eps)^{O(1)}$.
\end{theorem}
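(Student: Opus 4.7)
The plan is to combine the Chudnovsky--Seymour theorem (all roots of $Z(G)$ for claw-free $G$ lie in $(-\infty,0)$) with Barvinok's polynomial interpolation technique and the main computational result of this paper, Theorem~\ref{thm:compute coef}. The zero-free region for $Z(G)$ provided by Chudnovsky--Seymour is the slit plane $U := \C \setminus (-\infty,0)$, which is not a disk centred at the origin, so the Barvinok interpolation method cannot be applied to $Z(G)$ directly. I will use a polynomial change of variables to transport the problem onto a disk.

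More precisely, for each $\lambda \in U$ I construct a polynomial $\phi = \phi_\lambda$ of some constant degree $N = N(\lambda)$ (independent of $G$) with $\phi(0)=0$, $\phi(1) = \lambda$, and $\phi(\ov D_\rho) \subset U$ for some $\rho > 1$, where $\ov D_\rho$ is the closed disk of radius $\rho$ about $0$. Existence of such a $\phi$ is a routine Barvinok-style exercise: truncate the Taylor expansion of a conformal isomorphism of the open unit disk onto $U$ fixing $0$ at sufficiently high order, then rescale so that $1$ maps to $\lambda$; shrinking $\rho$ slightly ensures the polynomial approximation still avoids the slit. Setting $p(w) := Z(G)(\phi(w))$, this is a polynomial in $w$ of degree at most $nN$ whose roots are exactly the $\phi$-preimages of the roots of $Z(G) \subset (-\infty,0)$, and hence lie outside $\ov D_\rho$. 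Thus $\log p$ admits a single-valued analytic branch on $\ov D_\rho$ with $\log p(0)=0$.

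Barvinok's interpolation argument now yields that the degree-$m$ Taylor polynomial $T_m$ of $\log p$ around $0$ satisfies $|\log p(1) - T_m(1)| \leq \eps$ for $m = O(\log(n/\eps))$, using the crude bound $|p(w)| \leq (1+\max_{\ov D_\rho}|\phi|)^n$ to control $|\log p|$ on $\ov D_\rho$. Consequently $\exp(T_m(1))$ is a multiplicative $\eps$-approximation to $p(1) = Z(G)(\lambda)$. The coefficients of $T_m$ are recovered from the first $m$ coefficients of $p$ via the Newton identities, and because $\phi$ has constant degree $N$, those first $m$ coefficients of $p$ are in turn polynomial functions of $a_0,\ldots,a_m$, the first $m$ coefficients of $Z(G)(t) = \sum_k a_k t^k$ (where $a_k$ counts independent sets of size $k$ in $G$). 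By Theorem~\ref{thm:compute coef} these $a_k$ can be computed in time polynomial in $|V|$ for bounded-degree $G$ and $m = O(\log(|V|/\eps))$, which yields the desired algorithm.

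The main obstacle is the construction of $\phi$ in the second paragraph: one must guarantee that the image of a disk \emph{strictly larger} than the unit disk avoids $(-\infty,0)$ while passing through both $0$ and $\lambda$. The degree $N$ inevitably grows as $\lambda$ approaches the slit (the conformal map from the disk to the slit plane has a derivative singularity on the boundary), so one must carefully track how $N$, and the gap $\rho-1$, degenerate in terms of the distance from $\lambda$ to $(-\infty,0)$; for any fixed $\lambda$, however, $N$ and $\rho$ are constants absorbed into the $O(1)$ exponent in the claimed running time.
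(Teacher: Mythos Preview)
Your overall strategy---compose $Z(G)$ with a Barvinok-type polynomial change of variables, then invoke Theorem~\ref{thm:compute coef}---is exactly the paper's approach, but your construction of $\phi$ is impossible as stated and this is a genuine gap, not merely a detail to be filled in.

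You require a non-constant polynomial $\phi$ with $\phi(0)=0$ and $\phi(\ov D_\rho)\subset U=\C\setminus(-\infty,0)$. By the open mapping theorem, $\phi(D_\rho)$ is an open set containing $0$, and any open neighbourhood of $0$ in $\C$ meets $(-\infty,0)$. Hence $\phi(\ov D_\rho)\not\subset U$ for \emph{any} such $\phi$. (Equivalently: there is no conformal isomorphism from the open disk onto the slit plane fixing $0$, because $0$ lies on the boundary of $\C\setminus(-\infty,0]$; the domain $\C\setminus(-\infty,0)$ you wrote is not even open.) Your final paragraph flags the construction as ``the main obstacle'' but does not notice that it is actually impossible.

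The missing ingredient is the one place where the bounded-degree hypothesis enters the zero-location argument: Shearer's theorem gives $|\zeta|\ge \lambda^*(\Delta)>1/(e(\Delta-1))$ for every root $\zeta$ of $Z(G)$, so the roots are bounded away from $0$ uniformly in $G$ (this fails without the degree bound: $Z(K_n)$ has its root at $-1/n$). One therefore only needs $\phi$ to map $\ov D_\rho$ into $\C\setminus(-\infty,-c]$ for a fixed $c=c(\Delta)>0$, and this \emph{is} achievable. The paper does exactly this: it uses Barvinok's polynomial $\phi_\rho$ mapping the disk into a thin horizontal strip $S_\rho$ around $[0,1]$, and chooses $\rho$ (depending on $\lambda$ and $\Delta$) so that $\lambda S_\rho$ meets the negative real axis only in a short interval $(-c',0]$ with $c'<1/(e(\Delta-1))$. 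Once you insert this quantitative root-free gap, the rest of your argument (Newton identities, coefficients of the composite from the first $m$ coefficients of $Z(G)$, Theorem~\ref{thm:compute coef}) goes through and coincides with the paper's proof.
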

Note that when $G$ is the line graph of some graph $H$ we have that $Z_G(\lambda)$ is equal to the matching polynomial of $H$. 
So in particular, Theorem \ref{thm:ind claw free} implies a result of Bayati, Gamarnik, Katz, Nair, and Tetali \cite{BGKNT7}.
Our proof of it however is entirely different from the proof in \cite{BGKNT7}.

\subsection{The Tutte polynomial}\label{subsec:Tutte}
The random cluster formulation of the Tutte polynomial of a graph $G=(V,E)$ is a two-variable polynomial, which is denoted  by $Z_T(G)$ and is defined by
\begin{equation}
Z_T(G)(q,w):=\sum_{A\subseteq E} q^{k(A)}w^{|A|},
\end{equation}
where $k(A)$ denotes the number of components of the graph $(V,A)$.
In particular, if $w=-1$, $Z_T(G)(q,-1)$ is equal to the chromatic polynomial of $G$.

Jerrum and Sinclair \cite{JS93} showed that when $q=2$ and $w>0$ there exists a randomized polynomial-time approximation algorithm for computing evaluations of the Tutte polynomial in general.
In \cite{GJ12} Goldberg and Jerrum showed that approximating evaluations of the Tutte polynomial on general graphs for $q>2$ and $w>0$ is as hard as counting independent sets in bipartite graphs and in \cite{GJ12a} Goldberg and Jerrum showed that for several choices of real parameters $(q,w)$ it is even \#P-hard to approximate the evaluation of the Tutte polynomial on general graphs.
Goldberg and Guo \cite{GG14} looked at the complexity of approximately evaluating the Tutte polynomial for general graphs at complex values.

When $w=-1$ and $q\in \N$, $Z_T(G)(q,w)$ gives the number of $q$-colorings of $G$. Lu and Yin \cite{LY13} showed that when $q>2.58\Delta$ there exists a deterministic polynomial-time algorithm for approximating the Tutte polynomial at $(q,-1)$ on graphs of maximum degree at most $\Delta$. There are many randomized algorithms of the sort above with sharper bounds on $q$; see e.g.\ Jerrum \cite{J95} and Vigoda \cite{V00}.
As far as we know there are no general results known for the Tutte polynomial on bounded degree graphs.

We will consider the Tutte polynomial as a univariate polynomial by considering $w$ to be constant.
In Section \ref{sec:Tutte} we prove the following result.
\begin{theorem}\label{thm:Tutte}
Let $\Delta\in \N$ and let $w\in \C$. Then there exists a constant $K$ (depending on $\Delta$ and $w$) such that if $q\in \C$ is such that $|q|>K$, then there exists a deterministic  algorithm, which, given a loopless multigraph $G=(V,E)$ of maximum degree at most $\Delta$ and $\eps>0$, computes a multiplicative $\eps$-approximation to $Z(G)(q,w)$ in time $(|V|/\eps)^{O(1)}$.
\end{theorem}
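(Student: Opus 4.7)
The plan is to specialize the paper's general framework to the Tutte polynomial: normalize $Z_T$ so that its constant term is $1$, locate a disk around the origin on which the normalized polynomial does not vanish, compute its first $O(\log(n/\eps))$ Taylor coefficients in polynomial time via Theorem~\ref{thm:compute coef}, and then approximate the logarithm by a truncated Taylor series in the spirit of Barvinok~\cite{B14a,Babook}. Concretely, writing $n=|V|$ and using $\rk(A)=n-k(A)$, I factor
\[
Z_T(G)(q,w)=q^{n}\,\hat Z_T(G)(1/q,w),\qquad \hat Z_T(G)(z,w):=\sum_{A\subseteq E}z^{\rk(A)}w^{|A|},
\]
so that $\hat Z_T(G)(\cdot,w)$ is a univariate polynomial in $z$ of degree at most $n-1$ with $\hat Z_T(G)(0,w)=1$. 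A multiplicative $\eps$-approximation to $Z_T(G)(q,w)$ at a point with $|q|>K$ then reduces, up to the explicit factor $q^{n}$, to one for $\hat Z_T(G)(z,w)$ at the point $z=1/q$ inside the disk $|z|<1/K$.

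For the zero-free region I would invoke the standard polymer/cluster expansion for the random cluster model on bounded degree graphs. In a graph of maximum degree at most $\Delta$ the number of connected edge subsets of size $k$ meeting a given vertex is at most $(e\Delta)^{k}$, and this bound makes the logarithmic expansion of $\hat Z_T(G)(z,w)$ converge absolutely (so in particular $\hat Z_T\neq 0$) whenever $|z|$ is smaller than an explicit function of $\Delta$ and $|w|$; Sokal-type zero-free regions for the multivariate Tutte polynomial on bounded degree graphs supply the constant $K=K(\Delta,w)$ we need.

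The heart of the argument is the coefficient-extraction step. The coefficient of $z^{k}$ in $\hat Z_T(G)(z,w)$ is a sum over rank-$k$ edge subsets of $G$ weighted by a power of $w$, which is a graph polynomial of exactly the form handled by Theorem~\ref{thm:compute coef} once $w$ is treated as a fixed constant. That theorem yields the first $m=O(\log(n/\eps))$ coefficients of $\hat Z_T(G)(z,w)$ in time $(n/\eps)^{O(1)}$. Barvinok's truncation argument then converts these, via the Newton/power-sum recurrence relating a polynomial to the Taylor series of its logarithm, into the first $m$ Taylor coefficients of $g(z):=\log\hat Z_T(G)(z,w)$, and shows that evaluating the resulting truncated series at $z=1/q$ gives an additive $\eps$-approximation to $g(1/q)$ provided $|1/q|$ lies strictly inside the zero-free disk. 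Exponentiating and multiplying by $q^{n}$ yields the desired multiplicative approximation to $Z_T(G)(q,w)$.

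The main obstacle is this third step: fitting $\hat Z_T(\cdot,w)$ into the class $\mathcal{C}$ so that Theorem~\ref{thm:compute coef} actually applies and delivers genuinely polynomial rather than quasi-polynomial running time. The zero-freeness input is classical for bounded degree graphs, and the Taylor truncation is by now a standard template; it is the efficient computation of the first $O(\log n)$ coefficients of the Tutte polynomial on bounded degree graphs that constitutes the real technical content.
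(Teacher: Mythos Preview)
Your proposal is correct and follows essentially the same route as the paper: the paper defines $p_T(G)(z)=z^{|V|}Z_T(G)(1/z,w)$, which coincides with your $\hat Z_T$, invokes the Jackson--Procacci--Sokal zero-free region \cite{JPS13} for the constant $K$, verifies that $p_T$ is a BIGCP, and then applies Theorem~\ref{thm:compute coef} together with Corollary~\ref{cor:approx}. The one concrete point you left implicit, and which the paper spells out for the BIGCP verification, is that an edge set $A$ with $\rk(A)=k$ has, after deleting isolated vertices, at most $2k$ vertices (since $|V(F)|=k(F)+k\le |V(F)|/2+k$), giving $\alpha=2$ and $\beta=2^{\Delta}$ in Definition~\ref{df:bigcp}.
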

\begin{rem}
From the proof the Theorem~\ref{thm:Tutte} it follows that the running time is in fact bounded by 
\[
(|V|/\eps)^{\frac{D}{1 -K/|q|}\Delta \ln(\Delta)}|V|^{D'}
\]
for some absolute constants $D,D'$.

The constant $K$ in the theorem above comes from a paper of Jackson, Procacci and Sokal \cite{JPS13} and unfortunately takes half a page to state exactly. 
However, when $w$ satisfies $|1+w|\leq 1$ (this includes the chromatic polynomial), the constant $K$ may be taken to be $6.91\Delta$.
\end{rem}
\begin{rem} \label{re:sokal}
Sokal \cite[Conjecture 21]{J06} conjectured that $Z_T(G)(q,-1)\neq 0$ as long as $\Re(q)> \Delta(G)$.
Combined with our results (and the technique from Section~\ref{subsec:clawfree}) a confirmation of the conjecture would imply an efficient approximation algorithm for computing the number of $(\Delta+1)$-colorings of any graph $G$ of maximum degree at most $\Delta$, a notorious problem in computational counting.
\end{rem}

\subsection{Partition functions of spin models}\label{subsec:spin}
Let $A\in \C^{k\times k}$ be a symmetric matrix. In the context of statistical physics $A$ is often called a \emph{spin model} cf.\ \cite{HJ}.
For a graph $G=(V,E)$, the partition function of $A$ is defined as
\begin{equation}
p(G)(A)=\sum_{\phi:V\to [k]}\prod_{\{u,v\}\in E}A_{\phi(u),\phi(v)} \label{eq:part spin}.
\end{equation}
If $A$ is the adjacency matrix of some graph $H$, then $p(G)(H)$ is equal to the number of graph homomorphisms from $G$ to $H$.
In \cite{BS14a} $p(G)(A)$ is called the graph homomorphism partition function.

Building on a line of research started by Dyer and Greenhill \cite{DG00} and Bulatov and Grohe \cite{BG05}, a full dichotomy theorem has been proved for the complexity of exactly computing the partition function of a complex spin model by Cai, Chen and Lu \cite{CCL13}. This dichotomy essentially says that computing the partition function of $A$ exactly is \#P hard unless the matrix $A$ has some special structure.

Lu and Yin \cite{LY13} proved, using the correlation decay approach, that for fixed $\Delta\in \N$, if a real matrix $A$ is sufficiently close to the all ones matrix (i.e. $|A_{i,j}-1|\leq O(1)/\Delta$ for all $i,j=1,\ldots k$), then there exists a $(|V(G)|/\eps)^{O(1)}$-time algorithm for computing a multiplicative $\eps$-approximation to $P(G)(A)$ on graphs of maximum degree at most $\Delta$. 
Barvinok and Sob\'eron \cite{BS14a} showed that there exists a $(|V(G)|/\eps)^{O(\ln|V(G)|)}$-time algorithm for complex-valued matrices $A$ that satisfy $|A_{i,j}-1|\leq O(1)/\Delta$ for all $i,j=1,\ldots,k$.

Building on the work of Barvinok and Sob\'eron we prove in Section \ref{sec:hom} the following result.
\begin{theorem}\label{thm:part spin}
Let $\Delta,k\in \N$. Then there exists a deterministic algorithm, which, given a graph $G=(V,E)$ of maximum degree at most $\Delta$, a (complex-valued) symmetric $k\times k$ matrix $A$ such that  $|A_{i,j}-1|\leq 0.34/\Delta$ for all $i,j=1,\ldots,k$, and $\eps>0$, computes a multiplicative $\eps$-approximation to $p(G)(A)$ in time $(|V|/\eps)^{O(1)}$.
\end{theorem}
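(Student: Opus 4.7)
The plan is to cast the computation of $p(G)(A)$ as the evaluation at $z=1$ of a univariate polynomial, establish a zero-free disk for that polynomial via the Barvinok--Sob\'eron result, and then apply the main technical theorem of this paper (Theorem~\ref{thm:compute coef}) to compute its first $O(\log|V|)$ coefficients in polynomial time.

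Let $J$ denote the $k\times k$ all-ones matrix and set $A(z) := J + z(A-J)$ and $P(z) := p(G)(A(z))$. Then $P(0) = p(G)(J) = k^{|V|}$ is trivially computable, while $P(1) = p(G)(A)$ is the quantity of interest. By definition~\eqref{eq:part spin}, $P$ is a polynomial in $z$ of degree at most $|E|\le \Delta|V|/2$, and the terms are indexed by maps $\phi:V\to[k]$ so that $P$ belongs to the class $\mathcal{C}$ of graph polynomials to which Theorem~\ref{thm:compute coef} will apply.

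For any $z$ with $|z|\le R$, the entries of $A(z)$ satisfy $|A(z)_{i,j}-1| \le R\cdot 0.34/\Delta$. The constant $0.34$ in the hypothesis is engineered so that, for some $R$ strictly larger than $1$, the matrix $A(z)$ still lies in the zero-free regime of Barvinok and Sob\'eron~\cite{BS14a}, so $P(z)\ne 0$ on the closed disk $\{|z|\le R\}$. Consequently, $f(z):=\log P(z)$ (with a fixed branch) is analytic on this disk and admits a convergent Taylor expansion
\[
f(z) = |V|\log k + \sum_{j\ge 1} c_j z^j.
\]
The standard Barvinok estimate (via the Cauchy integral formula and the trivial bound $|P(z)|\le (k \cdot (1+0.34/\Delta))^{|E|}$ on $|z|=R$) gives $|c_j|\le O(|V|)\cdot R^{-j}$, so the truncation $T_m(z) := |V|\log k + \sum_{j=1}^m c_j z^j$ satisfies $|f(1) - T_m(1)| = O(|V| \cdot R^{-m}/(R-1))$. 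Choosing $m = O\!\left(\log(|V|/\eps)\right)$ yields an additive $\eps$-approximation to $\log P(1)$, which is exactly a multiplicative $\eps$-approximation to $P(1) = p(G)(A)$.

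It remains to compute $c_1,\ldots,c_m$. These can be recovered from the first $m$ coefficients $p_1,\ldots,p_m$ of $P$ by the usual Newton-style identities relating a power series and its logarithm, so the whole algorithmic burden reduces to computing $p_1,\ldots,p_m$. The naive expansion writes $p_j$ as a sum over weighted connected subgraphs with $j$ edges (together with connection-coefficient contributions), which a direct enumeration handles in time $|V|^{O(m)} = |V|^{O(\log|V|)}$, giving only quasi-polynomial time and explaining why previous work of Barvinok--Sob\'eron stopped at that bound. The main obstacle, and precisely the point where Theorem~\ref{thm:compute coef} is essential, is to do this enumeration in time $c_{\Delta}^{\,m}\cdot\mathrm{poly}(|V|)$ by exploiting the fact that in a graph of maximum degree $\Delta$ the number of connected subgraphs of size $j$ rooted at a fixed vertex is only $c_\Delta^{\,j}$. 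This removes the dependence of the exponent on $m$ and yields overall running time $(|V|/\eps)^{O(1)}$, completing the proof.
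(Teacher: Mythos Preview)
Your strategy matches the paper's: interpolate linearly between $J$ and $A$, invoke the Barvinok--Sober\'on zero-free region, and feed the resulting univariate polynomial into Theorem~\ref{thm:compute coef}. (The paper actually routes through the edge-coloured Theorem~\ref{thm:compute coef colored}, because it simultaneously proves the more general Theorem~\ref{thm:edge color spin} with a different matrix on each edge; for a single matrix $A$ the uncoloured version would suffice.) However, two steps are left unjustified.

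The main gap is that you assert ``$P$ belongs to the class $\mathcal{C}$'' without checking it, and this verification is precisely the content of the proof. To apply Theorem~\ref{thm:compute coef} you must show that the normalised polynomial $q(G)(z)=k^{-|V|}P(z)$ (the normalisation is needed so that the constant term is $1$, as required by Definition~\ref{df:bigcp}) is a BIGCP: expand $\prod_{e=\{u,v\}\in E}\big(J+z(A-J)\big)_{\phi(u),\phi(v)}$, identify the coefficient of $z^i$ as a sum over $i$-edge subsets $F\subseteq E$ whose contribution depends only on the graph $(V(F),F)$, and then rewrite this as $\sum_{H\in\G_{2i}}\lambda_{H,i}\,\ind(H,G)$ with each $\lambda_{H,i}$ computable in time $O(\beta^{|V(H)|})$. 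The paper carries this out in \eqref{eq:pf G(F)}--\eqref{eq:spin ind}; your sentence about ``terms indexed by maps $\phi:V\to[k]$'' does not address any of conditions (i)--(ii) of Definition~\ref{df:bigcp}.

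A secondary imprecision: an upper bound on $|P(z)|$ on $|z|=R$ does not, via the Cauchy integral formula, bound the Taylor coefficients of $\log P$ (you would need control of $|\log P|$, hence also a lower bound on $|P|$). What actually gives $|c_j|\le d/(jR^j)$ is the zero-free region alone, through the factorisation $P(z)=P(0)\prod_i(1-z/\zeta_i)$ with $|\zeta_i|>R$; this is exactly the content of Lemma~\ref{lem:approx} and \eqref{eq:TayPowSum}, and no bound on $|P|$ is needed.
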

\begin{rem}
The constant $0.34$ can be replaced by $0.45$ if $\Delta\geq 3$, and by $0.54$ if  $\Delta$ is large enough, cf.\ \cite{BS14a}. 
\end{rem}

In \cite{BS14b} Barvinok and Sober\'on introduced partition functions of graph homomorphisms of $G$ with multiplicities and gave a quasi-polynomial-time algorithm for computing them for certain matrices. 
In Section \ref{sec:hom} we will show that our results also apply to these partition functions.

\subsection{Partition functions of edge-coloring models}\label{subsec:edge}
Edge-coloring models originate in statistical physics and their partition functions have been introduced to the graph theory community by de la Harpe and Jones \cite{HJ} (where they are called vertex models).
We call any map $h:\N^k\to \C$ a \emph{$k$-color edge-coloring model}. 
For a graph $G=(V,E)$, the \emph{partition function} of $h$ is defined by 
\begin{equation}\label{eq:def pf}
p(G)(h):=\sum_{\phi:E\to[k]}\prod_{v\in V}h(\phi(\delta(v))),
\end{equation}
where $\delta(v)$ denotes the set of edges incident with the vertex $v$ and $\phi(\delta(v))$ denotes the multiset of colors that the vertex $v$ `sees', which we identify with its incidence vector in $\N^k$ so that we can apply $h$ to it. Explicitly, $\phi(\delta(v))$ is identified with the vector $(a_1, \ldots, a_k) \in \N^k$ if for each $i$ there are $a_i$ occurrences of the color $i$ amongst the edges incident with $v$.

Partition functions of edge-coloring models form a rich class of graph parameters including the number of matchings (take $h:\N^2\to \C$ defined by $h(\alpha)=1$ if $\alpha_1\leq 1$ and $0$ otherwise), as well as partition functions of spin models, as has been proved by Szegedy \cite{S7,S10}.
These partition functions can be seen as Holant problems; see e.g. \cite{CHL10,CLX11,CGW13}. 
They can also be seen as tensor network contractions. We refer the reader to \cite{R13} for more background.

Just as for partition functions for spin models much work has been done to establish a complexity dichotomy result for exactly computing Holant problems; see \cite{CHL10,CLX11,CGW13}.
Not much is known about the complexity of approximating partition functions of edge-coloring models except for a few special cases. 
As already mentioned, Bayati, Gamarnik, Katz, Nair, and Tetali \cite{BGKNT7} found an efficient approximation algorithm for counting matchings in bounded degree graphs and Lin, Liu and Lu \cite{LLL14} found efficient approximation algorithms for counting edge covers. Both of these algorithms are based on the correlation decay method.

Building on work of the second author \cite{R15} we will prove the following result in Section \ref{sec:edge}.
\begin{theorem}\label{thm:part edge}
Let $\Delta,k\in \N$. Then there exists a deterministic algorithm, which, given a multigraph $G=(V,E)$ of maximum degree at most $\Delta$, a $k$-color edge-coloring model $h$ such that $|h(\phi)-1|\leq 0.35/(\Delta+1)$ for all $\phi\in \N^k$, and $\eps>0$, computes a multiplicative $\eps$-approximation to $p(G)(h)$ in time $(|V|/\eps)^{O(1)}$.
\end{theorem}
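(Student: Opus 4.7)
My plan is to follow the Barvinok-style framework developed elsewhere in the paper: reduce the computation of $p(G)(h)$ to evaluating at $z=1$ a univariate polynomial $q$ that (i) has a trivial value at $z=0$, (ii) belongs to the class $\mathcal{C}$ of Theorem~\ref{thm:compute coef}, and (iii) is zero-free on a disk of radius strictly greater than $1$ around the origin. Concretely, set $h_z(\phi):=1+z(h(\phi)-1)$ and define
\[
q(z):=p(G)(h_z).
\]
Expanding the vertex product in \eqref{eq:def pf} shows that $q$ is a polynomial in $z$ of degree at most $|V|$ with $q(0)=k^{|E|}$, which is trivially computable, and $q(1)=p(G)(h)$. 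The coefficients of $q$ admit a natural representation as sums over induced subgraphs of $G$ weighted by local edge-coloring data, which is the form required to place $q$ inside the class $\mathcal{C}$.

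\textbf{Zero-free disk.} The key input is the non-vanishing theorem of Regts \cite{R15}: there exists a constant $c>0.35$ such that $p(G)(h')\neq 0$ for every graph $G$ of maximum degree at most $\Delta$ and every edge-coloring model $h'$ with $|h'(\phi)-1|\leq c/(\Delta+1)$. By construction $|h_z(\phi)-1|\leq |z|\cdot 0.35/(\Delta+1)$, so $q$ is nowhere zero on the closed disk of radius $r:=c/0.35>1$ centered at the origin, with $r$ independent of $G$ and $h$. The slack between the hypothesis constant $0.35$ and the non-vanishing constant $c$ is precisely what guarantees that the target point $z=1$ lies strictly in the interior of the zero-free disk.

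\textbf{Taylor approximation and the main obstacle.} Since $q(0)\neq 0$ and $q$ has no zeros on $\overline{D(0,r)}$, the branch of $\log q$ taking value $|E|\log k$ at $0$ is holomorphic on this disk. Truncating its Taylor expansion at order $N=O(\log(|V|/\eps))$ approximates $\log q(1)$ to additive error $\eps$, which yields a multiplicative $\eps$-approximation to $p(G)(h)$ after exponentiation; the tail bound follows from the standard estimate on coefficients of $\log q$ via Cauchy's formula on a circle of radius slightly less than $r$, together with the degree bound $\deg q=O(|V|)$. By Newton's identities the first $N$ Taylor coefficients of $\log q$ are determined by the first $N$ coefficients of $q$ itself. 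The main obstacle, and the essential new ingredient, is computing these coefficients in time $(|V|/\eps)^{O(1)}$ rather than the naive $|V|^{O(\log|V|)}$: this is precisely what Theorem~\ref{thm:compute coef} delivers, once one verifies that the $j$-th coefficient of $q$ can be written as a sum, over induced subgraphs of $G$ with at most $j$ edges, of a local quantity depending only on the restriction of $h$ to each piece. Enumerating such subgraphs is tractable because $G$ has bounded degree, and this replaces the exhaustive polynomial-expansion step of prior quasi-polynomial-time approaches, completing the proof.
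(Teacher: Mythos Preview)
Your interpolation setup and zero-freeness input are exactly what the paper uses, but the final paragraph glosses over the genuine difficulty and contains a real gap. You assert that Theorem~\ref{thm:compute coef} ``delivers'' the computation of the first $N$ coefficients of $q$ ``once one verifies that the $j$-th coefficient of $q$ can be written as a sum, over induced subgraphs of $G$ with at most $j$ edges, of a local quantity.'' But this verification is precisely the hard part, and it does not go through in the form required by the BIGCP definition. Expanding the product over $V$ gives, for the coefficient of $z^j$, a sum over \emph{vertex} subsets $U\subseteq V$ with $|U|=j$ (not edge subsets), and the contribution of $U$ is
\[
k^{-|E(U)|}\sum_{\phi:E(U)\to[k]}\ \prod_{u\in U}(h-1)(\phi(\delta(u))),
\]
where $E(U)$ is the set of edges incident with $U$. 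This quantity depends not only on the induced subgraph $G[U]$ but also on the number of edges leaving each $u\in U$ to $V\setminus U$, because $\phi(\delta(u))$ sees those ``half edges.'' Thus the natural local object is not an induced subgraph but what the paper calls a \emph{fragment} $(G[U],\kappa)$, and the coefficient is a linear combination of fragment counts $\ind^*(F,G)$ rather than of $\ind(H,G)$.

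It is true (and the paper notes, citing \cite{R15}) that one can rewrite fragment counts as linear combinations of ordinary induced subgraph counts, so in principle $q$ is a BIGCP; however, the resulting coefficients $\lambda_{H,i}$ are not known to be computable in the time $O(\beta^{|V(H)|})$ demanded by condition~(ii) of Definition~\ref{df:bigcp}. This is exactly why the paper cannot invoke Theorem~\ref{thm:compute coef} directly and instead redevelops the entire machinery of Section~\ref{sec:coef} for fragments (Lemma~\ref{lem:connected count frag}, Lemma~\ref{lem:additivity frag}, Theorem~\ref{thm:compute coef frag}). Your proposal would be complete if you either (a) supplied an efficient algorithm for the $\lambda_{H,i}$ in the induced-subgraph expansion, or (b) reproduced the fragment argument; as written, the appeal to Theorem~\ref{thm:compute coef} is unjustified.
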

\begin{rem}
The constant $0.35$ may be replaced by $0.47$ if $\Delta\geq 3$ and by $0.56$ if $\Delta$ is large enough; see \cite{R15}.
Moreover, for readers familiar with the orthogonal group invariance of these partition functions, it is interesting to note that one can use Corollary 6b from \cite{R15} to find a much larger family of edge-coloring models for which the partition function can be efficiently approximated.
\end{rem}

\subsection{Organization}
In the next section we shall consider an algorithm due to Barvinok \cite{B14a} to approximate evaluations of polynomials.
Section \ref{sec:coef} contains our main technical contribution: we will introduce a class of graph polynomials and give an efficient algorithm for computing their low order coefficients on bounded degree graphs.
These two algorithms (or variations of them) will then be combined in Sections \ref{sec:independence}--\ref{sec:edge} to prove the results above. These sections can be read independently of one another. 
Finally, we conclude in Section \ref{sec:conclude} with some remarks and questions.

\section{Approximating evaluations of polynomials}\label{sec:algorithms}
In this section we present an algorithm due to Barvinok \cite{B14a} to approximate evaluations of polynomials.
We take a slightly different approach and give full details for the sake of completeness.

Let $p\in \C[z]$ be a polynomial $p(z) = a_0 + a_1z + \cdots + a_dz^d$ of degree $d$ and suppose that $p(z)\neq 0$ for all $z$ in an open disk $D$ of radius $M$.
Define the function $f$ on this disk by
\begin{equation}
f(z):=\ln p(z),
\end{equation}
(where we fix a branch of the logarithm by fixing the principal value of the logarithm at $p(0)$). Recall by Taylor's Theorem that for each $t \in D$, $f(t) = \sum_{j=0}^\infty \frac{t^j}{j!}f^{(j)}(0)$.
In order to approximate $p$ at $t\in D$, we will find an additive approximation to $f$ at $t$ by truncating the Taylor expansion around $z=0$. For each $m \in \N$, let
\begin{equation}
T_m(f)(t):=f(0)+\sum_{j=1}^m \frac{t^j}{j!}f^{(j)}(0).
\label{eq:taylor}
\end{equation}

This can then be transformed to give a multiplicative approximation to $p$. 
It will be more convenient for us to use a slightly different form of (\ref{eq:taylor}) which we derive below.

Let $\zeta_1, \ldots, \zeta_d\in \C$ be the roots of $p$. 
Then we can write $p(z) = a_d(z - \zeta_1) \cdots (z-\zeta_d)$ and $f(z) = \ln(a_d)+\ln(z - \zeta_1) + \cdots +\ln(z - \zeta_d)$. From this we see that $f'(z) = (z - \zeta_1)^{-1} + \cdots + (z - \zeta_d)^{-1}$ and hence for $j\geq 1$,
\[
f^{(j)}(0) = -(j-1)!\sum_{i=1}^d \zeta_i^{-j}.
\]
Thus defining the $j$th \emph{inverse power sum} to be $p_j:= \zeta_1^{-j} + \cdots + \zeta_d^{-j}$ we see that \begin{equation}
\label{eq:TayPowSum}
T_m(f)(t) = f(0) -  \sum_{j=1}^m \frac{p_jt^j}{j} = \ln(a_0) - \sum_{j=1}^m \frac{p_jt^j}{j}.
\end{equation}
In the next proposition we derive a variant of the Newton identities that relate the inverse power sums and the coefficients of the polynomial.
\begin{proposition}
\label{pr:newton}
For the polynomial $p(z) = a_0 + \cdots + a_dz^d$ as above and its inverse power sums $p_j$ as defined above, we have for each $k=1,2, \ldots$ that
\[
ka_k= -\sum_{i=0}^{k-1}a_ip_{k-i}.
\]
(Here we take $a_i=0$ if $i>d$.)
\end{proposition}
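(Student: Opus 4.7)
The plan is to derive the identity by comparing coefficients in the logarithmic derivative relation $p'(z) = p(z) f'(z)$, where $f(z) = \ln p(z)$. Since $p(0) = a_0 \neq 0$ (the disk $D$ on which $p$ is nonzero is centered at $0$), the function $f$ is holomorphic in a neighborhood of the origin, so this identity is valid there as an identity of formal power series.

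First I would read off the Taylor expansion of $f'$ from equation~\eqref{eq:TayPowSum}. That formula, valid for all $m$, identifies $f$ with the convergent power series
\[
f(z) = \ln(a_0) - \sum_{j=1}^{\infty} \frac{p_j}{j} z^j,
\]
and differentiating term by term yields
\[
f'(z) = -\sum_{j=1}^{\infty} p_j\, z^{j-1}.
\]
Meanwhile, $p'(z) = \sum_{k=1}^{d} k a_k z^{k-1}$ is just the formal derivative of $p$.

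Next I would multiply $p(z) \cdot f'(z)$ as a Cauchy product of power series and extract the coefficient of $z^{k-1}$ on both sides of $p'(z) = p(z)f'(z)$. On the left one gets $k a_k$. On the right, the coefficient of $z^{k-1}$ in $\bigl(\sum_{\ell \ge 0} a_\ell z^\ell\bigr)\bigl(-\sum_{j \ge 1} p_j z^{j-1}\bigr)$ is
\[
-\sum_{\substack{\ell + (j-1) = k-1 \\ \ell \ge 0,\ j \ge 1}} a_\ell\, p_j \;=\; -\sum_{\ell=0}^{k-1} a_\ell\, p_{k-\ell},
\]
using the convention $a_\ell = 0$ for $\ell > d$. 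Equating the two expressions gives exactly the claimed identity $k a_k = -\sum_{i=0}^{k-1} a_i p_{k-i}$.

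There is no real obstacle here; the only thing to be careful about is the domain of validity of the power series expansion of $f$, which is handled by the assumption $p(0) \neq 0$ ensuring $f$ is holomorphic at the origin, together with the fact that the coefficient identity is a purely formal consequence of $p' = p\cdot f'$ and therefore does not depend on the radius of convergence beyond a neighborhood of $0$.
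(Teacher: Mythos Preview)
Your proof is correct and follows essentially the same route as the paper: both start from the expansion in \eqref{eq:TayPowSum}, differentiate to obtain $f'(z)=-\sum_{j\ge 1}p_j z^{j-1}$, rewrite $p'(z)=p(z)f'(z)$, and then compare coefficients of $z^{k-1}$. The only difference is cosmetic phrasing (you speak of a Cauchy product of formal power series, the paper writes out the double sum directly).
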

\begin{proof}
From \eqref{eq:TayPowSum} we know that for $z\in D$ we have $\ln(p(z)) = \ln(a_0)-\sum_{j=1}^\infty \frac{p_jz^j}{j}$. 
Differentiating both sides and multiplying by $p(z)$ we obtain
\[
p'(z)=-p(z)\sum_{j=1}^\infty p_jz^{j-1}
\]
and so
\[
\sum_{k=1}^d ka_kz^{k-1} = -\sum_{i=0}^da_iz^i \sum_{j=1}^\infty p_jz^{j-1}.
\]
Comparing coefficients of $z^{k-1}$ on each side gives the desired identity.
\end{proof}

The next lemma shows that the quality of the approximation \eqref{eq:taylor} and hence \eqref{eq:TayPowSum} depends on the location of the complex roots of $p$.

\begin{lemma}\label{lem:approx}
Given $M>0$ and $t \in \C$ satisfying $|t|<M$, there exists a constant $C=C(t,M)\leq (1-|t|/M)^{-1}$ such that the following holds.
Suppose $p$ is a polynomial of degree $d$ with no roots in the open disk $D$ of radius $M$. Then for every $\varepsilon > 0$,  $\exp(T_m(f)(t))$ is a multiplicative $\varepsilon$-approximation to $p(t)$, where $m = C\ln(d / \varepsilon)$.
\end{lemma}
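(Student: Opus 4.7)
The plan is to reduce the multiplicative approximation requirement to a single additive bound $|f(t) - T_m(f)(t)| \leq \varepsilon$, and then control this error via the inverse power sum identity \eqref{eq:TayPowSum} combined with the zero-freeness of $p$ on $D$. For the reduction, observe that $\exp(T_m(f)(t))/p(t) = \exp(T_m(f)(t) - f(t))$; writing the exponent as $a + bi$, the modulus of the ratio equals $e^a$ and its argument equals $b$ modulo $2\pi$. Hence both conditions in the definition of a multiplicative $\varepsilon$-approximation are implied by the single inequality $|f(t) - T_m(f)(t)| \leq \varepsilon$.

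To bound this additive error, I would invoke the Taylor expansion of $f$ at $0$ (convergent on all of $D$, since $p$ does not vanish there) together with \eqref{eq:TayPowSum} to write
\[
f(t) - T_m(f)(t) = -\sum_{j=m+1}^\infty \frac{p_j\, t^j}{j}.
\]
The hypothesis that $p$ has no roots in $D$ means every root $\zeta_i$ satisfies $|\zeta_i|\geq M$, so $|p_j| \leq d/M^j$. With $r := |t|/M \in [0,1)$, a direct geometric-series estimate then yields
\[
|f(t) - T_m(f)(t)| \;\leq\; \sum_{j=m+1}^\infty \frac{d\,r^j}{j} \;\leq\; \frac{d\,r^{m+1}}{(m+1)(1-r)}.
\]

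It remains to choose $m$ making this bound at most $\varepsilon$. Up to the modest prefactor $1/((m+1)(1-r))$, this reduces to $d\,r^m \leq \varepsilon$, i.e.\ $m \geq \ln(d/\varepsilon)/\ln(1/r)$. I would therefore set $C := 1/\ln(M/|t|)$, enlarged by an absolute constant if needed to absorb the prefactor. The stated inequality $C \leq (1 - |t|/M)^{-1}$ is then a consequence of the elementary estimate $-\ln r \geq 1 - r$ on $(0,1]$, which follows by noting that $-\ln r - (1-r)$ vanishes at $r=1$ and has nonpositive derivative on $(0,1]$. The only genuine point of care — rather than a true obstacle — is ensuring that the polynomial-in-$m$ prefactor does not spoil the estimate for small $d/\varepsilon$; this is handled by the constant slack absorbed into $C$.
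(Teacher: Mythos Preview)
Your proposal is correct and follows essentially the same route as the paper's proof: reduce the multiplicative $\varepsilon$-approximation to the additive bound $|f(t)-T_m(f)(t)|\leq\varepsilon$, use \eqref{eq:TayPowSum} together with $|\zeta_i|\geq M$ to obtain the geometric tail estimate $\tfrac{d\,r^{m+1}}{(m+1)(1-r)}$ with $r=|t|/M$, and then choose $C$ so that $m=C\ln(d/\varepsilon)$ makes this at most $\varepsilon$. The paper makes the same elementary observation $-\ln r\geq 1-r$ to justify that $C=(1-r)^{-1}$ suffices, and is similarly informal about absorbing the prefactor $1/((m+1)(1-r))$.
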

\begin{proof}
Let $q:=|t|/M$. Then, as $|t|<M$, we have $q<1$.
We will first show that 
\begin{equation}
\left| f(t)-T_m(f)(t) \right|\leq\frac{d q^{m+1}}{(m+1)(1-q)}.	\label{eq:approx}
\end{equation}
By \eqref{eq:TayPowSum} we have 
\begin{equation}\label{eq:Taylor 1}
|f(t)-T_m(f)(t)|\leq \left| \sum_{j=m+1}^{\infty}\frac{p_j t^j}{j} \right|   \leq \frac{1}{m+1}\sum_{j=m+1}^{\infty}|p_jt^j|.
\end{equation}
By assumption we know that $|\zeta_i|\geq M$ for each $i=1,\ldots,d$.
Hence $|p_j|\leq d/M^j$ and so $|p_jt^j|\leq dq^j$.
Substituting this in into \eqref{eq:Taylor 1} and using that $q<1$ we obtain \eqref{eq:approx}.

Take $m=C\ln (d/\varepsilon)$, where $C$ is chosen such that $C\geq (\ln1/q)^{-1}$ and $1/m \leq 1-q$ (so it is easy to check that e.g.\ $C =(1-q)^{-1}$ suffices).
Then the right-hand side of \eqref{eq:approx} is at most $\eps$. 
Write $z=T_m(f)(t)$. Then we have
{$|e^{f(t)-z}|\leq e^{|f(t)-z|}\leq e^{\eps}$}
and {similarly} $|e^{z-f(t)}|\leq e^{\eps}$. 
(This follows from the fact that for a complex number $y=a+bi$, we have $|e^y|=e^a \leq e^{|y|}$.)
Moreover, {the angle between $e^{z}$ and $e^{f(t)}$ is bounded by $|\Im \ln e^{z-f(t)}|\leq |\ln e^{z-f(t)}|\leq \eps$.}
This shows that $e^z=\exp(T_m(f)(t))$ is a multiplicative $\eps$-approximation to $p(t)$.
\end{proof}

From (\ref{eq:TayPowSum}) and Lemma~\ref{lem:approx}, if we have an efficient way of computing the inverse power sums $p_j$ from $j=1$ up to $O(\ln(\deg(p)))$ (which by Proposition~\ref{pr:newton} is essentially equivalent to computing the first $O(\ln(\deg(p)))$ coefficients of $p$), then we have an efficient way of approximating evaluations of $p$ at points in the disk around zero where $p$ is nonvanishing. We formalize this in the corollary below.
In the next section we will show that for certain types of graph polynomials we can compute the inverse power sums efficiently.

\begin{cor}
\label{cor:approx}
Given $M>0$ and $t \in \C$ satisfying $|t|<M$, there exists a constant $C = C(t,M)\leq (1-|t|/M)^{-1}$ such that the following holds.
Suppose $p$ is a polynomial given by $p(z) = a_0 + a_1z + \cdots + a_dz^d$ with no roots in the open disk $D$ of radius $M$. Suppose further that we are able to compute $a_0$ and the inverse power sums $p_1, \ldots, p_r$ in time $\tau(r)$ for each $r = 1, \ldots, d$.
Then we can compute a multiplicative $\varepsilon$-approximation to $p(t)$ in time $O(\tau(m))$, where $m = C \ln(d / \varepsilon)$. 
\end{cor}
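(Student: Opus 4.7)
The plan is to observe that Corollary~\ref{cor:approx} is essentially a bookkeeping consequence of Lemma~\ref{lem:approx} combined with the expression \eqref{eq:TayPowSum}, so the proof is a short pipeline rather than a new argument. First I would take $C = C(t,M)$ and $m = C\ln(d/\varepsilon)$ exactly as provided by Lemma~\ref{lem:approx}. By that lemma, $\exp(T_m(f)(t))$ is a multiplicative $\varepsilon$-approximation to $p(t)$, so it suffices to show that $T_m(f)(t)$ can be computed in time $O(\tau(m))$.

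Next I would expand $T_m(f)(t)$ using identity \eqref{eq:TayPowSum}, which gives
\[
T_m(f)(t) \;=\; \ln(a_0) \;-\; \sum_{j=1}^{m}\frac{p_j\, t^j}{j}.
\]
By the hypothesis of the corollary, $a_0$ and the inverse power sums $p_1,\ldots,p_m$ can be obtained in time $\tau(m)$. From these values, computing the powers $t^j$ for $j\leq m$, forming the sum of the $m$ terms $p_j t^j/j$, taking a logarithm of $a_0$ (with the principal branch, consistent with the choice made before Lemma~\ref{lem:approx}), and finally exponentiating the result takes only $O(m)$ additional arithmetic operations. Since $\tau(m) \geq m$ (one cannot produce $m$ numbers in less than $m$ steps), the overall time is $O(\tau(m))$.

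I do not expect any real obstacle: the only thing that needs a small amount of care is being consistent about the branch of the logarithm, but this is already fixed in the setup around \eqref{eq:taylor}, and the exponential at the end is branch-independent. As is standard in this line of work, I would treat basic arithmetic operations and evaluating $\exp$ and $\ln$ to sufficient precision as unit-cost; if one prefers a bit-complexity model, one simply picks the precision to be $\mathrm{poly}(\log(d/\varepsilon))$, which only multiplies the cost by a polylogarithmic factor and does not affect the $O(\tau(m))$ bound stated.
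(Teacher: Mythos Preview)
Your proposal is correct and matches the paper's approach exactly: the paper's proof is a single sentence stating that the corollary is immediate from \eqref{eq:taylor}, \eqref{eq:TayPowSum}, and Lemma~\ref{lem:approx}, and you have simply spelled out that pipeline in more detail. The extra care you take with the branch of the logarithm and the cost model is fine and does not deviate from the intended argument.
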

\begin{proof}
The corollary is immediate from (\ref{eq:taylor}), (\ref{eq:TayPowSum}) and Lemma~\ref{lem:approx}.
\end{proof}

\section{Computing coefficients of graph polynomials}\label{sec:coef}
In this section we present our main technical contribution, which is an efficient way to compute the inverse power sums (and hence the coefficients) of a large class of graph polynomials for bounded degree graphs. Throughout, we will focus on graph polynomials whose coefficients can be expressed as linear combinations of induced subgraph counts. Note that in this section we make no assumptions on the locations of the roots of polynomials.
The results in this section are stated only for graphs, but are in fact valid for multigraphs. So the reader could read multigraph instead of graph everywhere in this section. (The \emph{degree} of a vertex in a multigraph is the number of edges incident with the vertex, where a loop is counted twice.)

We start with some definitions after which we state the main result of this section.
Two graphs $H=(V_H,E_H)$ and $G=(V_G,E_G)$ are said to be \emph{isomorphic} if there exists a bijection $f:V_H \rightarrow V_G$ such that for any $u,v \in V_H$, we have that $f(u)f(v) \in E_G$ if and only if $uv \in E_H$. 
We say that $H$ is an induced subgraph of $G$ if there is a subset $S\subseteq V_G$ such that $H$ is isomorphic to $G[S]$, the graph induced by $S$.
We write $\ind(H,G)$ for the number of sets $S\subseteq V_G$ such that $H$ is isomorphic to $G[S]$ (i.e. the number of induced subgraphs of $G$ isomorphic to $H$). 
Note that if $H$ is the empty graph we have $\ind(H,G)=1$ for all $G$.

By $\G$ we denote the collection of all graphs and by $\G_k$ for $k\in \N$ we denote the collection of graphs with at most $k$ vertices. 
A graph invariant is a function $f:\G\to S$ for some set $S$ that takes the same value on isomorphic graphs.
A \emph{graph polynomial} is a graph invariant $p:\G\to \C[z]$, where $\C[z]$ denotes the ring of polynomials in the variable $z$ over the field of complex numbers.
Call a graph invariant $f$ \emph{multiplicative} if $f(\emptyset)=1$ and  $f(G_1\cup G_2)=f(G_1)f(G_2)$ for all graphs $G_1,G_2$ (here $G_1\cup G_2$ denotes the disjoint union of the graphs $G_1$ and $G_2$).
\begin{df}\label{df:bigcp}
Let $p$ be a multiplicative graph polynomial defined by
\begin{equation}\label{eq:mult graph pol}
p(G)(z):=\sum_{i=0}^{d(G)} e_{i}(G)z^{i}
\end{equation}
for each $G\in \G$ with $e_0(G)=1$.
We call $p$ a \emph{bounded induced graph counting polynomial (BIGCP)} if there exists constants $\alpha,\beta \in \N$ such that the following two conditions are satisfied:
\begin{itemize}
\item[(i)] for every graph $G$, the coefficients $e_i$ satisfy
\begin{equation}\label{eq:ind coef}
e_i(G):=\sum_{H\in \G_{\alpha i}}\lambda_{H,i}\ind(H,G)
\end{equation}
for certain $\lambda_{H,i}\in \C$;
\item[(ii)] for each $H\in \G_{\alpha i}$, the coefficients $\lambda_{H,i}$  can be computed in time $O(\beta ^{|V(H)|})$.
\end{itemize}
\end{df}
If, for example, for each $i$, the coefficient $e_i(G)$ in \eqref{eq:mult graph pol} is equal to the number of independent sets of size $i$ in $G$, then it is easy to see that $p$ (which is of course the independence polynomial) is a BIGCP.
In this case the obvious brute force algorithm to compute the coefficient $e_i(G)$ for an $n$-vertex graph $G$ runs in time $O(n^i)$ (by checking all $i$-subsets of $V(G)$) and if $i=O(\ln n)$ then this is quasi-polynomial time. 
Our main result of this section is a general algorithm for computing inverse power sums of BIGCPs (and hence the coefficients of BIGCP's by Proposition~\ref{pr:newton}), which when applied to this example, computes $e_i(G)$ in polynomial time even when $i=O(\ln n)$ as long as the maximum degree of $G$ is bounded.

\begin{theorem}\label{thm:compute coef}
Let $C>0$ and $\Delta\in \N$ and let $p(\cdot)$ be a bounded induced graph counting polynomial.
Then there is a deterministic $(n/\eps)^{O(1)}$-time algorithm, which, 
given any $n$-vertex graph $G$ of maximum degree at most $\Delta$ and any $\eps>0$, computes the inverse power sums $p_1,\ldots,p_m(G)$ of $p(G)$ for $m=C\ln(n/\eps)$.
\end{theorem}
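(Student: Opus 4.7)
The plan is to reduce the problem to computing the first $m$ coefficients of the formal power series $\ln p(G)(z)=\sum_{j\geq 1}c_j(G)z^j$: by \eqref{eq:TayPowSum} we have $p_j(G)=-j\,c_j(G)$, so it suffices to compute $c_1(G),\ldots,c_m(G)$ in time $(n/\eps)^{O(1)}$.

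Since $p$ is multiplicative, each $c_j$ is additive under disjoint unions, and I would exploit this to write $c_j(G)$ as a sum of contributions from connected induced subgraphs only. For each connected graph $H$ with $|V(H)|\leq\alpha m$, define $\mu_{H,j}\in\C$ recursively in order of $|V(H)|$ by
\[
\mu_{H,j}:=c_j(H)-\sum_{\substack{S\subsetneq V(H)\\ H[S]\text{ connected}}}\mu_{H[S],j},
\]
so that $c_j(H)=\sum_{S\subseteq V(H),\,H[S]\text{ conn}}\mu_{H[S],j}$ for every connected $H$. An induction on the number of components (using that a connected induced subgraph of $G_1\sqcup G_2$ lies entirely in one of the two components) extends this identity to all $G$:
\[
c_j(G)=\sum_{\substack{S\subseteq V(G)\\ G[S]\text{ connected}}}\mu_{G[S],j}.
\]
A further induction on $|V(H)|$, using that $c_j(H)$ is a polynomial in $e_1(H),\ldots,e_j(H)$ and that each $e_i(H)$ depends on $H$ only via induced subgraphs of size at most $\alpha i\leq\alpha j$ (by \eqref{eq:ind coef}), shows that $\mu_{H,j}=0$ whenever $|V(H)|>\alpha j$, so the sum truncates at $|S|\leq\alpha j$.

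Given this expansion, the algorithm proceeds as follows. First, enumerate the set $\mathcal S$ of all $S\subseteq V(G)$ with $G[S]$ connected and $|S|\leq\alpha m$; by the standard bound $(e\Delta)^{k-1}$ on the number of connected subgraphs of size $k$ through a fixed vertex of a max-degree-$\Delta$ graph, we have $|\mathcal S|\leq n(e\Delta)^{\alpha m}=(n/\eps)^{O(1)}$, and $\mathcal S$ can be listed in polynomial total time by a standard BFS-style enumeration. Process $\mathcal S$ in order of increasing size. For each $S\in\mathcal S$ with $k:=|S|$, compute $e_i(G[S])$ for $i=1,\ldots,m$ by rewriting \eqref{eq:ind coef} using $\ind(H'',G[S])=\#\{T\subseteq S:G[T]\cong H''\}$ as
\[
e_i(G[S])=\sum_{\substack{T\subseteq S\\ |T|\leq\alpha i}}\lambda_{G[T],i},
\]
which costs at most $2^k\cdot O(\beta^k)=(n/\eps)^{O(1)}$ per $e_i$ by (ii) of Definition~\ref{df:bigcp}. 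From the $e_i(G[S])$ extract $c_j(G[S])$ for $j\leq m$ via a formal logarithm (equivalently, by the Newton-type identity in Proposition~\ref{pr:newton} applied to $p(G[S])$), and then compute $\mu_{G[S],j}$ from the recursion above using the previously stored $\mu_{G[S'],j}$ for $S'\subsetneq S$. Finally output $p_j(G)=-j\sum_{S\in\mathcal S}\mu_{G[S],j}$.

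The main obstacle is establishing the cluster expansion---that $c_j(G)$ reduces to a sum over connected induced subgraphs of size at most $\alpha j$---since this is precisely what converts a brute-force sum over the $2^{O(\log^2 n)}$ isomorphism classes in $\G_{\alpha m}$ into a polynomial sum over the $(n/\eps)^{O(1)}$ relevant connected induced subgraphs in a bounded-degree graph. Once this reduction is in place, the enumeration of connected subgraphs and the recursive computation of $\mu$ are standard.
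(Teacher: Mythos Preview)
Your proposal is correct and reaches the same conclusion, but it is organized differently from the paper's proof. The paper parametrizes by \emph{isomorphism classes} of connected graphs: it writes $p_k=\sum_{H\in\mathcal C_{\alpha k}(G)}a_{H,k}\,\ind(H,G)$, proves via Lemma~\ref{lem:additivity} that the disconnected coefficients vanish, and then computes the $a_{H,k}$ recursively from the Newton identities combined with the product rule \eqref{eq:product of ind}, finally evaluating each $\ind(H,G)$ directly. You instead parametrize by \emph{connected subsets} $S\subseteq V(G)$ and use a M\"obius-type recursion to define $\mu_{G[S],j}$, so that $c_j(G)=\sum_S\mu_{G[S],j}$; this is the classical ``cluster expansion'' viewpoint. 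Both routes rest on the same three pillars (additivity $\Rightarrow$ connected support; BIGCP $\Rightarrow$ support of size $\le\alpha j$; bounded degree $\Rightarrow$ only $(n/\eps)^{O(1)}$ relevant connected subgraphs). What your approach buys is that it sidesteps isomorphism testing entirely and keeps all computations local to subsets of $V(G)$; what the paper's approach buys is that it works explicitly in the $\ind(H,\cdot)$ basis, which makes the passage to the colored versions in Section~\ref{subsec:color} transparent. One point in your write-up that deserves a line of justification is the claim that $\mu_{H,j}=0$ whenever $|V(H)|>\alpha j$: the induction you sketch is not quite self-contained, but it goes through cleanly if you extend $\mu$ to all (not just connected) $H$ via the full subset M\"obius inversion $\tilde\mu_{H,j}:=\sum_{S\subseteq V(H)}(-1)^{|V(H)\setminus S|}c_j(H[S])$, observe from \eqref{eq:ind coef} that $\tilde\mu_{H,j}=0$ once $|V(H)|>\alpha j$, check from additivity of $c_j$ that $\tilde\mu$ vanishes on disconnected $H$, and conclude $\tilde\mu=\mu$ on connected $H$ by uniqueness of your recursion. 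This is essentially the content of the paper's Lemma~\ref{lem:additivity} recast in your language.
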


\begin{rem}
The algorithm in the theorem above only has access to the polynomial $p$ via condition (ii) in the definition of BIGCP, that is, it relies only on the algorithm which computes the complex numbers $\lambda_{H,i}$.
\end{rem}
\begin{rem} \label{rem:running time} Assuming $\Delta\geq 3$, the proof of the theorem above in fact yields a running time of $n^{C_1}(n/\eps)^{C_2} = (n/\eps)^{O(1)}$, where $C_1$ can be explicitly determined (and does not depend on $\alpha,\beta,C$ and $\Delta$) and where we can crudely take $C_2=10C\alpha\ln(\beta \Delta)$, where $\alpha$ and $\beta$ are the constants from the definition of BIGCP.
\end{rem}
Before we prove Theorem \ref{thm:compute coef} we will first gather some facts about induced subgraph counts and the number of connected induced subgraphs of fixed size that occur in a graph which we will need for the proof.

\subsection{Induced subgraph counts}
Define $\ind(H,\cdot):\G\to \C$ by $G\mapsto \ind(H,G)$. 
So we view $\ind(H,\cdot)$ as a graph invariant.
We can take linear combinations and products of these invariants.
In particular, for two graphs $H_1,H_2$ we have
\begin{equation}\label{eq:product of ind}
\ind(H_1,\cdot)\cdot \ind(H_2,\cdot)=\sum_{H\in \G}c^H_{H_1,H_2}\ind(H,\cdot),
\end{equation}
where for a graph $H$, $c^H_{H_1,H_2}$ is the number of ordered pairs of subsets of $V(H)$, $(S,T)$, such that $S \cup T = V(H)$ and $H[S]$ is isomorphic to $H_1$ and $H[T]$ is isomorphic to $H_2$.
In particular, given $H_1$ and $H_2$, $c^H_{H_1,H_2}$ is nonzero for only a finite number of graphs $H$.

Computing the parameter $\ind(H,G)$ is generally difficult, but it becomes easier if $H$ is connected (and $V(H)$ is not too large) and $G$ has bounded degree.

\begin{lemma}\label{lem:connected count}
Let $H$ be a connected graph on $k$ vertices and let $\Delta\in \N$. Then
\begin{itemize}
\item[(i)] there is an $O(n\Delta^{k-1})$-time algorithm, which, given any $n$-vertex graph $G$ with maximum degree at most $\Delta$, checks whether $\ind(H,G)\neq 0$;
\item[(ii)] there is an $O(k^2n^{2}\Delta^{2(k-1)})$-time algorithm, which, given any $n$-vertex graph $G$ with maximum degree at most $\Delta$, computes the number $\ind(H,G)$.
\end{itemize}
\end{lemma}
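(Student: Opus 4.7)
The key observation is that because $H$ is connected with $k$ vertices, every induced copy of $H$ in $G$ is supported on a \emph{connected} $k$-vertex subset of $V(G)$. Both parts therefore reduce to enumerating the connected $k$-subsets of $G$ and testing each against $H$ for isomorphism.

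For part (i), I would iterate over each vertex $v\in V(G)$ and, for each $v$, enumerate all connected subsets $S\subseteq V(G)$ with $v\in S$ and $|S|=k$ by a rooted tree-growth procedure: starting from $\{v\}$, repeatedly extend the current subset by a vertex adjacent to it in $G$, with standard backtracking so as to produce every such $S$. A branching argument---associate to each connected $k$-subset $S$ a canonical spanning tree rooted at $v$ (say the BFS tree with lexicographic tie-breaking) and bound the number of rooted trees on $k$ vertices sitting in a graph of maximum degree $\Delta$---shows that the number of connected $k$-subsets containing a fixed vertex is at most $C_k\Delta^{k-1}=O(\Delta^{k-1})$, where the constant $C_k$ depends only on $k$, and that they can be listed within this time. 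For each $S$ produced I test whether $G[S]\cong H$ by brute force over all $k!$ bijections $V(H)\to S$; since $k$ is fixed this is $O(1)$. We return \emph{yes} the moment the first match is found, and \emph{no} otherwise. Total running time $O(n\Delta^{k-1})$, matching (i).

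For part (ii), I would run the same enumeration but count the matches instead of halting at the first one. Since every $S$ with $G[S]\cong H$ has exactly $k$ vertices, it is produced $k$ times in total as the root $v$ ranges over the $k$ vertices of $S$; dividing the aggregate count by $k$ yields $\ind(H,G)$. The looser bound $O(k^2 n^2 \Delta^{2(k-1)})$ claimed in the lemma comfortably accommodates a less refined isomorphism test (checking all $\binom{k}{2}$ adjacencies under each candidate bijection at cost $O(k^2)$ per test) together with generous bookkeeping across the $n$ starting vertices. The only substantive content---and therefore the main obstacle---is the growth estimate that the number of connected $k$-subsets containing a fixed vertex of a graph of maximum degree $\Delta$ is $O(\Delta^{k-1})$; this is standard and provable either by induction on $k$ via the BFS exploration or via the bijection between such subsets and a canonical choice of rooted spanning subtree together with the elementary bound on rooted $k$-vertex trees embedded in a bounded-degree graph. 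Once this estimate is granted, everything else is routine, since isomorphism testing among $k$-vertex graphs is trivially constant-time for fixed $k$.
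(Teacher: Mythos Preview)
Your approach differs from the paper's, and the difference matters for how the lemma is later used. You enumerate connected $k$-subsets of $G$ and then test each for isomorphism with $H$; the paper instead enumerates partial \emph{embeddings of $H$} into $G$. Concretely, it fixes an ordering $v_1,\dots,v_k$ of $V(H)$ in which every $v_i$ with $i\ge 2$ has a neighbour among $v_1,\dots,v_{i-1}$ (possible since $H$ is connected), and grows the embedding one vertex at a time: $n$ choices for the image of $v_1$, and at most $\Delta$ choices for each subsequent $v_i$, since it must land on a $G$-neighbour of an already-placed vertex. This produces at most $n\Delta^{k-1}$ candidate embeddings with no hidden dependence on $k$, and the isomorphism check is absorbed into the search rather than performed separately afterwards. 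For (ii) the paper simply takes the resulting list of image sets, which may contain repeats, and deduplicates by comparing all pairs; that pairwise comparison is the source of the $k^2 n^2 \Delta^{2(k-1)}$ bound.

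Your route is correct only if $k$ is treated as an absolute constant, and that is not the intended reading. The number of connected $k$-subsets through a fixed vertex in a graph of maximum degree $\Delta$ can be as large as $c^{k}\Delta^{k-1}$ for some $c>1$ (the paper itself quotes the bound $(e\Delta)^{k-1}/2$ in Lemma~\ref{lem:graph count}), so your $C_k$ is exponential in $k$; your brute-force isomorphism test contributes a further $k!$. You explicitly invoke ``since $k$ is fixed this is $O(1)$'', but the explicit factor $k^2$ in the statement of (ii) signals that the big-$O$ constants are meant to be uniform in $k$, and indeed the lemma is applied in the proof of Theorem~\ref{thm:compute coef} with $k$ of order $\ln n$, where your hidden $k!\,C_k$ factor becomes $n^{\Theta(\ln\ln n)}$ and destroys the polynomial running time downstream. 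To match the stated bounds you should let the structure of $H$ drive the enumeration, as the paper does, rather than enumerate connected subsets of $G$ obliviously and test isomorphism afterwards.
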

Note that Lemma~\ref{lem:connected count} (i) enables us to test for graph isomorphism between bounded degree graphs when $|V(G)| = |V(H)|$.
\begin{proof}
Let us list the vertices of $V(H)$, $v_1,\ldots,v_k$ in such a way that for $i\geq 1$ vertex $v_i$ has a neighbour among $v_1,\ldots,v_{i-1}$.
Then to embed $H$ into $G$ we first select a target vertex for $v_1$ and then given that we have embedded $v_1, \ldots, v_{i-1}$ with $i\geq 2$ there are at most $\Delta$ choices for where to embed $v_i$. After $k$ iterations, we have a total of at most $n\Delta^{k-1}$ potential ways to embed $H$ and each possibility is checked in the procedure above. Hence we determine if $\ind(H,G)$ is zero or not in $O(n\Delta^{k-1})$ time.

The procedure above gives a list $L$ (of size at most $n\Delta^{k-1}$) of all sets $S \subseteq V(G)$ such that $G[S]$ is isomorphic to $H$, although the list may contain repetitions. It takes time $O(k^2|L|^2) = O(k^2n^2\Delta^{2(k-1)})$ to eliminate repetitions (by comparing every pair of elements in $L$), and the length of the resulting list gives the value of $\ind(H,G)$.
\end{proof}

Next we consider how to enumerate all possible connected induced subgraphs of fixed size in a bounded degree graph.
We will need the following result of Borgs, Chayes, Kahn, and Lov\'asz \cite[Lemma 2.1]{BCKL12}:
\begin{lemma}\label{lem:graph count}
Let $G$ be a graph of maximum degree $\Delta$. Fix a vertex $v_0$ of $G$.
Then the number of connected induced subgraphs of $G$ with $k$ vertices containing the vertex $v_0$ is at most $\frac{(e\Delta)^{k-1}}{2}$.
\end{lemma}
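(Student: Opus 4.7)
My plan is a two-step injection from connected induced subgraphs containing $v_0$ to rooted subtrees of a regular infinite tree, followed by a Fuss--Catalan-style count.

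First, fix an arbitrary total order on $V(G)$ and, to each connected induced subgraph $S$ of $G$ containing $v_0$ with $|V(S)|=k$, associate the BFS spanning tree $T_S$ of $G[S]$ rooted at $v_0$, breaking ties in the BFS according to the fixed vertex order. Since $V(T_S)=V(S)$, distinct subgraphs $S$ yield distinct trees $T_S$, so the quantity we want to bound is at most $N_k$, the number of rooted subtrees of $G$ at $v_0$ with exactly $k$ vertices.

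Next, let $\tilde G$ denote the universal cover of $G$ and fix a lift $\tilde v_0$ of $v_0$; then $\tilde G$ is an infinite tree of maximum degree $\Delta$, and when rooted at $\tilde v_0$ every vertex has at most $\Delta$ children. Each $T_S$ is simply connected, so it lifts uniquely through the covering map to a rooted subtree $\tilde T_S$ of $\tilde G$ at $\tilde v_0$ with the same number of vertices, and distinct $T_S$'s lift to distinct $\tilde T_S$'s. Hence $N_k \le b_k$, where $b_k$ denotes the number of rooted subtrees of the infinite $\Delta$-ary rooted tree containing the root with exactly $k$ vertices.

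To compute $b_k$, order the children at each vertex so that each rooted subtree becomes a rooted plane subtree; the natural \emph{root plus (possibly empty) subtree at each of the $\Delta$ children} decomposition yields the functional equation $F(x)=x(1+F(x))^{\Delta}$ for the generating function $F(x)=\sum_{k\ge 1} b_k x^k$. A standard application of Lagrange inversion then gives
\[
b_k \;=\; \tfrac{1}{k}\binom{\Delta k}{k-1}.
\]
Finally, using $\binom{\Delta k}{k-1}\le (\Delta k)^{k-1}/(k-1)!$ together with the sharp Stirling bound $k!\ge \sqrt{2\pi k}(k/e)^k$, a short computation produces $b_k \le e(e\Delta)^{k-1}/(k\sqrt{2\pi k})$, which is at most $(e\Delta)^{k-1}/2$ whenever $k\ge 2$; the case $k=1$ reduces to the trivial bound since $\{v_0\}$ is the unique such subgraph.

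The main obstacle is keeping the constant $\tfrac{1}{2}$ in the last step: the crude estimate $\binom{n}{k}\le (en/k)^k$ costs an extra factor of $e$, so the $\sqrt{2\pi k}$ refinement in Stirling is essential. The injection in the first two steps and the generating-function identity in the third are routine once the universal-cover language is in place.
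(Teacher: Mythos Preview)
The paper does not prove this lemma; it simply quotes it from Borgs, Chayes, Kahn and Lov\'asz \cite{BCKL12}. Your argument is essentially the standard proof: pass from connected induced subgraphs to spanning subtrees, embed subtrees of $G$ at $v_0$ into subtrees of the infinite rooted $\Delta$-ary tree, count the latter by the Fuss--Catalan formula $b_k=\tfrac{1}{k}\binom{\Delta k}{k-1}$, and finish with Stirling. Each step is sound. A couple of minor technical remarks: you implicitly assume $G$ is connected when invoking the universal cover, but since every subtree at $v_0$ lies in the component of $v_0$ this is harmless; and the universal cover is not really needed, since one can directly embed rooted subtrees of $G$ into the $\Delta$-ary tree by fixing an ordering of the neighbours at each vertex.

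There is, however, a genuine slip in your treatment of $k=1$. You write that this case ``reduces to the trivial bound since $\{v_0\}$ is the unique such subgraph'', but the stated bound for $k=1$ is $(e\Delta)^{0}/2=\tfrac12$, and $1\le\tfrac12$ is false. This is not a flaw in your method but in the lemma as printed: the version in \cite{BCKL12} gives the bound $(e\Delta)^{k-1}$ without the factor $\tfrac12$, which does hold for all $k\ge1$. Your Stirling computation in fact shows the stronger bound $b_k\le e(e\Delta)^{k-1}/(k\sqrt{2\pi k})$, which beats $(e\Delta)^{k-1}/2$ for $k\ge2$ and $(e\Delta)^{k-1}$ for all $k\ge1$; you should state the conclusion accordingly rather than claim the $k=1$ case is trivial.
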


As a consequence we can efficiently enumerate all connected induced subgraphs of logarrithmic size that occur in a bounded degree
graph $G$.
\begin{lemma}\label{lem:enumerate}
There is a $O(n^2k^7 (e\Delta)^{2k})$-time algorithm which, given $k \in \mathbb{N}$ and an $n$-vertex graph $G=(V,E)$ of maximum degree $\Delta$, outputs $\mathcal{T}_k$, the set of all $S \subseteq V$ satisfying $|S| \leq k$ and $G[S]$ connected.
\end{lemma}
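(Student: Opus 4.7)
The plan is to build $\mathcal{T}_k$ level by level via a breadth-first expansion, using as the key structural observation that every connected induced subgraph on $j\geq 2$ vertices contains a vertex whose removal keeps it connected (take any leaf of a spanning tree). Equivalently, every connected $S\subseteq V$ of size $j$ can be obtained from some connected $S'\subseteq V$ of size $j-1$ by adding a single vertex that has a neighbour in $S'$.

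Concretely, I would initialize $\mathcal{T}^{(1)}:=\{\{v\}: v\in V\}$, and then for $j=2,3,\ldots,k$ compute a candidate list
\[
L_j \;:=\; \bigl\{\, S\cup\{u\}\;:\; S\in \mathcal{T}^{(j-1)},\; u\in V\setminus S,\; u \text{ has a neighbour in } S\,\bigr\},
\]
followed by a deduplication step (sort the vertex set of each candidate, then either sort $L_j$ lexicographically or compare all pairs) to obtain $\mathcal{T}^{(j)}$, the set of distinct connected induced subgraphs of size exactly $j$. Finally output $\bigcup_{j=1}^k \mathcal{T}^{(j)}$. Correctness of this procedure follows by induction on $j$, with the leaf-of-spanning-tree observation ensuring that no connected subgraph is missed, while the restriction to neighbours guarantees that everything produced is indeed connected.

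For the running time, Lemma~\ref{lem:graph count} combined with a double-counting argument over pairs $(v,S)$ with $v\in S$ gives $|\mathcal{T}^{(j)}|\leq n(e\Delta)^{j-1}/(2j)$, and in particular the loose bound $|\mathcal{T}^{(j)}|\leq n(e\Delta)^{j-1}$. At level $j$, computing the external neighbourhood of each $S\in \mathcal{T}^{(j-1)}$ costs $O(j\Delta)$ and yields at most $j\Delta$ candidates, so $|L_j|=O(nj\Delta(e\Delta)^{j-2})$. Each candidate is stored as a sorted list of $j$ vertices, and the deduplication of $L_j$ can be carried out by pairwise comparison in time $O(|L_j|^2\cdot j)$. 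Summing the per-level cost over $j=1,\ldots,k$ gives a bound of order $n^2 k^{O(1)}(e\Delta)^{2k}$, which is easily absorbed into the claimed $O(n^2 k^7 (e\Delta)^{2k})$.

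The main obstacle is the deduplication: candidates can be generated many times (up to $j$ times each, corresponding to the different leaves one could have removed), and naively storing and comparing them is what inflates the $(e\Delta)^{k-1}$ factor from Lemma~\ref{lem:graph count} to $(e\Delta)^{2k}$. A hashing-based approach would in fact give a sharper bound, but since the theorem only requires the crude $O(n^2 k^7(e\Delta)^{2k})$ bound, the straightforward sort-and-compare deduplication suffices, and no further cleverness is needed.
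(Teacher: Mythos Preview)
Your proposal is correct and is essentially the same argument as the paper's: build the levels $\mathcal{T}^{(j)}$ from $\mathcal{T}^{(j-1)}$ by adding one neighbour, appeal to the leaf-of-a-spanning-tree observation for correctness, bound the level sizes via Lemma~\ref{lem:graph count}, and deduplicate by pairwise comparison to get the $n^2(e\Delta)^{2k}$ factor. The only cosmetic difference is that the paper keeps $\mathcal{T}_{k-1}\subset\mathcal{T}_k$ as a single growing set rather than stratifying by exact size, but the computations and bounds are the same.
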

\begin{proof}
By the previous result, we know that $|\mathcal{T}_k| \leq nk(e \Delta)^{k-1}$ for all $k$.

We inductively construct $\mathcal{T}_k$. For $k=1$, $\mathcal{T}_k$ is clearly the set of singleton vertices and takes time $O(n)$ to output.

Given that we have found $\mathcal{T}_{k-1}$ we compute $\mathcal{T}_k$ as follows. We first compute the multiset
\[
\mathcal{T}_k^* 
= \{ S \cup \{v\}: S \in \mathcal{T}_{k-1} 
\text{ and } v \in N_G(S) \}.
\]
Here $|N_G(S)| \leq |S|\Delta \leq k\Delta$ and takes time 
$O(k\Delta)$ to find (assuming $G$ is given in adjacency list form). Therefore computing $\mathcal{T}_k^*$ takes
 time $O(|\mathcal{T}_{k-1}|k\Delta) = O(nk^2(e\Delta)^{k})$, which is also the size of $\mathcal{T}_k^*$. Finally we compute the set $\mathcal{T}_k$ by removing the repetitions in $\mathcal{T}_k^*$ (by comparing each element with all previous elements), which takes time $O(k^2|\mathcal{T}_k^*|^2) = O(n^2k^6(e\Delta)^{2k})$.
 
Starting from $\mathcal{T}_1$, we perform the above iteration $k$ times, requiring a total running time of $O(n^2k^7(e\Delta)^{2k})$.
 
 It remains only to show that $\mathcal{T}_k$ contains all 
 the sets we desire. Clearly $\mathcal{T}_{k-1} \subset \mathcal{T}_k$ and assume by induction that $\mathcal{T}_{k-1}$ contains all $T \subset V$ of size $k-1$ with $G[T]$ connected. 
 Given $S\subseteq V$ such that $|S|=k$ and 
 $G[S]$ is connected, take any tree of $G[S]$, remove 
 a leaf $v$ and call the resulting set of vertices $S'$. 
 Then it is clear that $S' \in \mathcal{T}_{k-1}$ and this 
 implies $S  = S' \cup \{v\} \in \mathcal{T}_k$.
\end{proof}

We call a graph invariant $f:\G\to \C$ \emph{additive} if for each $G_1,G_2\in \G$ we have  $f(G_1\cup G_2)=f(G_1)+f(G_2)$.
The following lemma is a variation of a lemma due to Csikv\'ari and Frenkel \cite{CF12}; it is fundamental to our approach.
\begin{lemma}\label{lem:additivity}
Let $f:\G\to \C$ be a graph invariant given by $f(\cdot):=\sum_{H\in \G}a_H\ind(H,\cdot)$. 
Then $f$ is additive if and only if $a_H=0$ for all graphs $H$ that are disconnected. 
\end{lemma}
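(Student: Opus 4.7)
The proof will establish the two implications separately; the forward (sufficiency) direction is routine and the reverse (necessity) direction requires a minimum-counterexample argument.

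For the sufficiency direction, assume $a_H=0$ whenever $H$ is disconnected. The key observation is that if $H$ is connected and $G=G_1\cup G_2$ is a disjoint union, then any $S\subseteq V(G)$ with $G[S]\cong H$ must be entirely contained in either $V(G_1)$ or $V(G_2)$, because $G[S]$ inherits its connectedness from $H$ and the two components of $G$ share no edges. Consequently $\ind(H,G_1\cup G_2)=\ind(H,G_1)+\ind(H,G_2)$ for every connected $H$, and summing $a_H\ind(H,\cdot)$ over the (only relevant) connected $H$'s yields $f(G_1\cup G_2)=f(G_1)+f(G_2)$.

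For the necessity direction, assume $f$ is additive and suppose for contradiction that there exists a disconnected $H$ with $a_H\neq 0$. First, evaluating additivity at $G_1=G_2=\emptyset$ gives $f(\emptyset)=0$, so $a_\emptyset=0$. Choose a disconnected graph $H_0$ with $a_{H_0}\neq 0$ and $|V(H_0)|$ minimal; let $C_1,\dots,C_k$ (with $k\geq 2$) be its connected components. The plan is to compute $f(H_0)-\sum_{i=1}^k f(C_i)$ from the defining expansion and show it equals $a_{H_0}$, contradicting additivity.

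To carry this out, split the expansion $f(H_0)=\sum_H a_H\ind(H,H_0)$ by $|V(H)|$. Graphs $H$ with $|V(H)|>|V(H_0)|$ satisfy $\ind(H,H_0)=\ind(H,C_i)=0$ and are irrelevant. For $|V(H)|=|V(H_0)|$ one has $\ind(H,C_i)=0$ (since $|V(C_i)|<|V(H_0)|$), and $\ind(H,H_0)=1$ precisely when $H\cong H_0$ and is $0$ otherwise, so this level contributes exactly $a_{H_0}$ to the difference. For $|V(H)|<|V(H_0)|$, minimality kills every disconnected $H$ (and the empty-graph case was handled above), while for connected $H$ the sufficiency-direction identity $\ind(H,H_0)=\sum_{i=1}^k \ind(H,C_i)$ forces the contribution to $f(H_0)-\sum_i f(C_i)$ to vanish. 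Combining the three ranges gives $f(H_0)-\sum_i f(C_i)=a_{H_0}\neq 0$, which contradicts additivity applied iteratively to $H_0=C_1\cup\cdots\cup C_k$. The main (mild) obstacle is simply the careful bookkeeping across the three ranges of $|V(H)|$; there is no analytic difficulty.
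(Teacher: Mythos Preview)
Your proof is correct and follows essentially the same strategy as the paper's: additivity of $\ind(H,\cdot)$ for connected $H$ gives sufficiency, and necessity proceeds by induction/minimal-counterexample on $|V(H)|$, evaluating $f$ at a smallest disconnected $H_0$ with $a_{H_0}\neq 0$ to isolate that coefficient. The only cosmetic difference is that the paper first reduces to the case $a_H=0$ for all connected $H$ (by subtracting off the connected part, which is additive by the sufficiency direction), whereas you handle the connected $H$ inline via the identity $\ind(H,H_0)=\sum_i\ind(H,C_i)$.
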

\begin{proof}
Let $H$ be connected.
Then for $G_1,G_2\in \G$ we have $\ind(H,G_1\cup G_2)=\ind(H,G_1)+\ind(H,G_2)$, as $H$ is connected. Thus $\ind(H,\cdot)$ is additive.
Clearly, linear combinations of additive graph parameters are again additive.
This implies that if $f$ is supported on connected graphs, then $f$ is additive.

Suppose next that $f$ is additive. We need to show that $a_H=0$ if $H$ is disconnected.
By the previous part of the proof, we may assume that $a_H=0$ for all connected graphs $H$.
Let now $H=H_1\cup H_2$ with both $H_1$ and $H_2$ nonempty.
We may assume by induction that for all graphs $H'$ of order strictly smaller than $k:=|V(H)|$ we have $a_{H'}=0$.
Now, by additivity we have
\[
f(H)=f(H_1)+f(H_2)=\sum_{H': |V(H')|\geq k}a_{H'}(\ind (H',H_1)+\ind(H',H_2))=0,
\]
since $|V(H_i)|<k$ for $i=1,2$. On the other hand we have 
\[
f(H)=\sum_{H': |V(H')|\geq k}a_{H'}\ind (H',H)=a_H\ind(H,H).
\]
As $\ind(H,H)\neq 0$, this implies that $a_H=0$ and finishes the proof.
\end{proof}

\subsection{Proof of Theorem \ref{thm:compute coef}}

Recall that $p(\cdot)$ is a bounded induced graph counting polynomial (BIGCP). Given an $n$-vertex graph $G$ with maximum degree at most $\Delta$, we must show how to compute the first $m$ inverse power sums $p_1, \ldots, p_m$ of $p(G)$ in time $(n/ \varepsilon)^{O(1)}$, where $m = C\ln(n / \varepsilon)$. 
To reduce notation, let us write $p=p(G)$, $d=d(G)$ for the degree of $p$, and $e_i=e_i(G)$ for $i=0,\ldots,d$ for the  coefficients of $p$ (from (\ref{eq:mult graph pol})).  
Recall that $p_k:= \zeta_1^{-k} + \cdots + \zeta_d^{-k}$, where $\zeta_1,\ldots,\zeta_d\in \C$ are the roots of the polynomial $p(G)$. 

Noting $e_0=1$, Proposition~\ref{pr:newton} gives
\begin{equation}\label{eq:newton}
p_k= -ke_k - \sum_{i=1}^{k-1}e_ip_{k-i},
\end{equation}
for each $k=1, \ldots, d$. 
By \eqref{eq:ind coef}, for $i\geq 1$, the $e_i$ can be expressed as linear combinations of induced subgraph counts of graphs with at most $\alpha i$ vertices. Since $p_1=-e_1$, this implies that the same holds for $p_1$.
By induction, \eqref{eq:product of ind}, and \eqref{eq:newton} we have that for each $k$
\begin{equation}\label{eq:power to ind}
p_k=\sum_{H\in \G_{\alpha k}}a_{H,k}\ind(H,G),
\end{equation}
for certain, yet unknown, coefficients $a_{H,k}$.

Since $p$ is multiplicative, the inverse power sums are additive. Thus Lemma \ref{lem:additivity} implies that $a_{H,k}=0$ if $H$ is not connected. 
Denote by $\mathcal{C}_{i}(G)$ the set of connected graphs of order at most $i$ that occur as induced subgraphs in $G$.
This way we can rewrite \eqref{eq:power to ind} as follows:
\begin{equation}\label{eq:power support}
p_k=\sum_{H\in \mathcal{C}_{\alpha k}(G)}a_{H,k}\ind(H,G).
\end{equation}
The next lemma says that we can compute the coefficients $a_{H,k}$ efficiently for $k=1,\ldots,m$, where $m=C\ln(n/ \eps)$.
\begin{lemma}\label{lemma:compute}
There is an $(n/\eps)^{O(1)}$-time algorithm, which given a BIGCP $p$ and an $n$-vertex graph $G$ of bounded maximum degree, computes and lists the coefficients $a_{H,k}$ in \eqref{eq:power support} for all $H\in \mathcal{C}_{\alpha k}(G)$ and all $k=1,\ldots,m = C \ln(n / \varepsilon)$.
\end{lemma}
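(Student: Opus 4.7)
The plan is to exploit the fact that the coefficients $a_{H,k}$ in \eqref{eq:power support} depend only on $H$ and on the polynomial $p$ (not on the ambient graph $G$). Hence once we can evaluate the inverse power sums $p_k(H)$ on the small graphs $H \in \mathcal{C}_{\alpha m}(G)$ themselves, we can recover the $a_{H,k}$ by a triangular inversion over the poset of induced subgraphs ordered by size.

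First, I would apply Lemma~\ref{lem:enumerate} to enumerate all vertex subsets $S \subseteq V(G)$ with $|S| \leq \alpha m$ and $G[S]$ connected, and partition them into isomorphism classes using Lemma~\ref{lem:connected count}(i) (applied pairwise; for graphs of equal order, non-vanishing of $\ind(\cdot,\cdot)$ is equivalent to isomorphism). This yields a polynomial-sized list of representatives for $\mathcal{C}_{\alpha m}(G)$ together with all values $\ind(H,G)$ and $\ind(H',H)$ for pairs with $H'$ a connected induced subgraph of $H$. Next, for each representative $H$ of order $v \leq \alpha m$, I would compute the first $m$ coefficients of $p(H)$ directly from the BIGCP definition by exhaustive subset enumeration,
\[
e_i(H) \;=\; \sum_{S \subseteq V(H),\, |S|\leq \alpha i} \lambda_{H[S],\,i},
\]
where each $\lambda_{H[S],i}$ is computed in time $O(\beta^{\alpha m})$ by condition (ii). Since $|V(H)|\leq \alpha m$ and $2^{\alpha m}= (n/\eps)^{O(1)}$, this step is polynomial in $n/\eps$. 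Then apply Proposition~\ref{pr:newton} to convert $(e_1(H),\ldots,e_m(H))$ into the inverse power sums $(p_1(H),\ldots,p_m(H))$.

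Finally, I would evaluate identity \eqref{eq:power support} at each representative $H$ to obtain
\[
p_k(H) \;=\; \sum_{\substack{H'\ \text{connected ind.\ subgraph of}\ H \\ |V(H')|\leq \alpha k}} a_{H',k}\, \ind(H',H).
\]
Since $\ind(H,H) = 1$ and no connected graph of order $|V(H)|$ other than $H$ itself embeds in $H$, we can iterate over $H \in \mathcal{C}_{\alpha m}(G)$ in order of increasing size and solve
\[
a_{H,k} \;=\; p_k(H) \,-\, \sum_{\substack{H'\ \text{connected proper ind.\ subgraph of}\ H \\ |V(H')|\leq \alpha k}} a_{H',k}\,\ind(H',H)
\]
for each $k=1,\ldots,m$, using the counts $\ind(H',H)$ tabulated in the previous step.

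The main (mild) obstacle is verifying that the coefficients $a_{H,k}$ in \eqref{eq:power to ind} are genuinely intrinsic to $H$ and $p$, so that probing $p_k$ on the small graph $H$ exposes the correct coefficient of $\ind(H,\cdot)$; this follows from the additivity of $p_k$ combined with Lemma~\ref{lem:additivity} (which restricts the support of $p_k$ to connected graphs) and from the triangularity of the induced-subgraph system above. For the running time, Lemma~\ref{lem:graph count} bounds $|\mathcal{C}_{\alpha m}(G)|$ polynomially in $n$, per-representative work costs $2^{O(\alpha m)} \beta^{O(\alpha m)} = (n/\eps)^{O(1)}$ since $m = C\ln(n/\eps)$ and $\alpha,\beta$ are constants, and the triangular solve contributes only polynomial overhead, yielding the desired $(n/\eps)^{O(1)}$ total.
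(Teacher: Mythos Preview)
Your approach is correct and genuinely different from the paper's. The paper inducts on $k$: using Newton's identity $p_k = -ke_k - \sum_{i=1}^{k-1} e_i p_{k-i}$ at the level of graph invariants, it expands each product $e_i\cdot p_{k-i}$ via the structure constants $c^H_{H_1,H_2}$ of \eqref{eq:product of ind}, reading off the coefficient of $\ind(H,\cdot)$ as $\sum_{(S,T):\,S\cup T=V(H)} a_{H[T],k-i}\,\lambda_{H[S],i}$. Thus the paper never evaluates $p_k$ on any concrete graph; it manipulates the symbolic expansion directly. You instead exploit the universality of the $a_{H,k}$ (which follows from the linear independence of the $\ind(H,\cdot)$ and the universal form of \eqref{eq:ind coef}) to \emph{probe} the invariant $p_k$ on the small graphs $H$ themselves: compute $p(H)$ by brute force, extract $p_k(H)$ by Proposition~\ref{pr:newton}, and then unwind the unit-triangular system $p_k(H)=\sum_{H'} a_{H',k}\ind(H',H)$ by induction on $|V(H)|$.

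Both routes cost $(n/\eps)^{O(1)}$. The paper's recursion is perhaps closer to how one would actually implement the algorithm (it avoids materialising the polynomials $p(H)$ and stays within the poset of connected subgraphs of $G$), and it generalises verbatim to the fragment setting of Section~\ref{subsec:coef}. Your argument is conceptually cleaner---it separates ``compute $p_k$ on small inputs'' from ``invert the induced-subgraph basis''---and makes the universality of the $a_{H,k}$ explicit rather than leaving it implicit in the induction. One small point worth stating more carefully: your triangular inversion uses that every connected induced subgraph of $H\in\mathcal C_{\alpha m}(G)$ again lies in $\mathcal C_{\alpha m}(G)$, so the system is closed within the list you enumerated; this is immediate but should be said.
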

\begin{proof}
Using the algorithm of Lemma \ref{lem:enumerate}, we first compute the sets $\mathcal{T}_{\alpha k}$ consisting of all subsets $S$ of $V(G)$ such that $|S|\leq \alpha k$ and $G[S]$ is connected, for $k=1\ldots, m$.
This takes time bounded by $(n/\eps)^{O(1)}$.
(Note that the algorithm in Lemma \ref{lem:enumerate} computes and lists all the sets $\mathcal{T}_{i}$ for $i=1,\ldots,\alpha m$.)
We next compute and list the graphs in $\mathcal{C}_{\alpha k}(G)$
by considering the set of graphs $\{G[S] \mid S \in \mathcal{T}_{\alpha k} \}$ and removing copies of isomorphic graphs
using Lemma \ref{lem:connected count} (i) to test for isomorphism.
This takes time at most $(n/\eps)^{O(1)}$
 for each $k$,
so the total time to compute and list the $\mathcal{C}_{\alpha k}(G)$ is bounded by $(n/\eps)^{O(1)}$.

To prove the lemma, let us fix $k\leq m$ and show how to compute the coefficients $a_{H,k}$, assuming that  we have already computed and listed the coefficients $a_{H,k'}$ for all $k'<k$.
Let us fix $H\in \mathcal{C}_{\alpha k}(G)$.
By (\ref{eq:newton}), it suffices to compute the coefficient of $\ind(H,\cdot)$ in $p_{k-i}e_{i}$ for $i=1,\ldots,k$ (where we set $p_0=1)$.
By \eqref{eq:ind coef}, \eqref{eq:product of ind} and \eqref{eq:power to ind} we know that the coefficient of $\ind(H,\cdot)$ in $p_{k-i}e_{i}$ is given by
\begin{equation}\label{eq:coef e_{k-i}p_i}
\sum_{H_1,H_2} c^H_{H_1,H_2}a_{H_2,(k-i)}\lambda_{H_1,i}=\sum_{(S,T) : S \cup T = V(H)} a_{H[T], (k-i)}\lambda_{H[S], i} .
\end{equation}
As $|V(H)|\leq \alpha k=O(\ln(n/\eps))$, the second sum in \eqref{eq:coef e_{k-i}p_i} is over at most $4^{\alpha k}=(n/\eps)^{O(1)}$ pairs $(S,T)$.
For each such pair, we need  to compute $\lambda_{H[S], i}$ and look up $ a_{H[T], (k-i)}$.
We can compute $\lambda_{H[S],i}$ in time bounded by $\beta ^{|S|}=(n/\eps)^{O(1)}$ since $p$ is a BIGCP.


Looking up $a_{H[T],(k-i)}$ in the given list requires us to test isomorphism of $H[T]$ with each graph in $\mathcal{C}_{\alpha(k-i)}(G)$ (noting that $a_{H[T],(k-i)}=0$ if $H[T] \not \in \mathcal{C}_{\alpha(k-i)}(G)$ by Lemma~ \ref{lem:additivity}). Using Lemma~\ref{lem:connected count}(i) to test for graph isomorphism, this takes time at most 
\[O(|\mathcal{C}_{\alpha(k-i)}(G)|\alpha(k-i)\Delta^{\alpha(k-i)-1})=O(n/\eps)^{O(1)}.
\]
Here we use Lemma~\ref{lem:graph count} to bound $|\mathcal{C}_{\alpha(k-i)}(G)|$.
Together, all this implies that the coefficient of $\ind(H,\cdot)$ in $p_{k-i}e_{i}$ can be computed in time bounded by $(n/\eps)^{O(1)}$, and so the coefficient $a_{H,k}$ can be computed in time $(n/\eps)^{O(1)}$.
Thus all coefficients $a_{H,k}$ for $H\in \mathcal{C}_{\alpha k}(G)$ can be computed and listed in time bounded by  $|\mathcal{C}_{\alpha k}(G)|(n/\eps)^{O(1)}=(n/\eps)^{O(1)}$.
This can be done for each $k = 1, \ldots, m$ in time $(n/\eps)^{O(1)}$.
\end{proof}
To finish the proof of the theorem, we compute $p_k$ for each $k=1,\ldots, m$ by adding all the numbers $a_{H,k}\ind(H,G)$ over all $H\in \mathcal{C}_{\alpha k}(G)$. This can be done in time 
\[O(m|\mathcal{C}_{\alpha m}(G)|(\alpha m)^2n^2\Delta^{2(\alpha m-1)})=(n/\eps)^{O(1)},\] where we use that computing $\ind(H,G)$ with $H\in \mathcal{C}_{\alpha k}(G)$ takes time $O((\alpha k)^2n^2\Delta^{2(\alpha k-1)})$ by Lemma~\ref{lem:connected count}(ii).

\subsection{Extensions to colored graphs}\label{subsec:color}

In this section, we treat the case of colored graphs, which we shall require later. In fact all earlier proofs go through line by line for the colored case, but we chose to avoid excessive generality for the sake of exposition. Here we restate various definitions for the colored case and restate the main theorem.

Let us fix $\N$, the natural numbers, as a set of colors.
Let $H=(V_H,E_H)$ and $G=(V_G,E_G)$ be graphs whose vertices (resp.\  edges)  are colored from the set $T$ (not necessarily properly).
For vertex-colored graphs $H=(V_H,E_H)$ and $G=(V_G,E_G)$ we define $H$ and $G$ to be \emph{color-isomorphic} if there is a color isomorphism 
between $H$ and $G$, that is, a graph isomorphism $f: V_H \rightarrow 
V_G$ such that $u$ and $f(u)$ have the same color for every $u \in 
V(H)$. For edge-colored graphs $H=(V_H,E_H)$ and $G=(V_G,E_G)$ we define $H$ and $G$ to be \emph{color-isomorphic} if there is a color isomorphism 
between $H$ and $G$, that is, a graph isomorphism $f: V_H \rightarrow 
V_G$ such that $uv$ and $f(uv)$ have the same color for every edge $uv \in 
V(H)$.
In both the vertex and edge colored cases, we say that $H$ is an induced colored-subgraph of $G$ if there is a subset $S\subseteq V_G$ such that $H$ is color-isomorphic to $G[S]$, the graph induced by $S$ (which inherits colors from the coloring of $G$).
We write $\ind_{vc}(H,G)$ 
(resp.\ $\ind_{ec}(H,G)$) 
for the number of sets $S\subseteq V_G$  such that $H$ is color-isomorphic to $G[S]$ (i.e. the number of induced colored-subgraphs of $G$ isomorphic to $H$). 

We can then extend the definitions in the natural way to their colored versions. In particular $\G$, $\G_k$, $\mathcal{C}_k(G)$ become respectively the set of all vertex (resp.\ edge) colored graphs, the set of all vertex (resp.\ edge) colored graphs of order at most $k$, and the set of all connected, vertex (resp.\ edge) colored induced subgraphs of $G$ of order at most $k$. Note that $\G_k$ becomes infinite in the colored setting but is finite in the uncolored setting, but this will not matter. The definition of (multiplicative) graph invariant and graph polynomial extend in the natural way to vertex (resp.\ edge) colored graphs and in particular a BIGCP for vertex (resp.\ edge) colored graphs is defined in exactly the same way. We need only note that although the sum in part (i) of the definition of BIGCP is infinite in the colored version, all but finitely many terms will be zero when evaluating for a particular choice of colored graph $G$. Now the colored version of Theorem~\ref{thm:compute coef} reads as follows.

\begin{theorem}\label{thm:compute coef colored}
Let $C>0$ and $\Delta\in \N$ and let $p(\cdot)$ be a bounded induced graph counting polynomial for vertex (resp.\ edge) colored graphs.
Then there is a deterministic $(n/\eps)^{O(1)}$-time algorithm, which, 
given any $n$-vertex vertex (resp.\ edge) colored graph $G$ of maximum degree at most $\Delta$ and any $\eps>0$, computes the inverse power sums $p_1,\ldots,p_m(G)$ of $p(G)$ for $m=C\ln(n/\eps)$.
\end{theorem}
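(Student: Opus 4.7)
The plan is to mimic the proof of Theorem~\ref{thm:compute coef} essentially verbatim, replacing every occurrence of $\ind(H,\cdot)$ by $\ind_{vc}(H,\cdot)$ (resp.\ $\ind_{ec}(H,\cdot)$) and reinterpreting $\G_k$, $\mathcal{C}_k(G)$, and ``isomorphism'' in the colored sense. The only genuine work is to check that the three enabling tools (counting, enumeration, and additivity) still function in the colored setting; once that is done, the recursive scheme of Lemma~\ref{lemma:compute} transports unchanged.

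First, I would lift the enabling lemmas. For Lemma~\ref{lem:connected count}, given a connected colored graph $H$ on $k$ vertices, I would run exactly the same BFS-based embedding procedure that produces at most $n\Delta^{k-1}$ candidate image sets $S\subseteq V(G)$, and then, for each candidate, verify in $O(k^2)$ time that $G[S]$ is \emph{color}-isomorphic to $H$ (vertex-colors match under the bijection, or edge-colors match). Lemma~\ref{lem:graph count} requires no change: since a colored induced subgraph of $G$ inherits its coloring from $G$, the number of connected colored induced subgraphs of size $k$ through a fixed vertex is bounded by the number of connected (uncolored) induced subgraphs, hence by $(e\Delta)^{k-1}/2$. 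Consequently Lemma~\ref{lem:enumerate} goes through: enumerate vertex subsets $S$ with $G[S]$ connected, equip each with its inherited coloring, and deduplicate using the colored isomorphism test above. Lemma~\ref{lem:additivity} also carries over, because $\ind_\star(H,G_1\cup G_2)=\ind_\star(H,G_1)+\ind_\star(H,G_2)$ still holds for connected colored $H$, and the inductive argument for the converse direction uses only the evaluation of $f$ on $H$ and $H_1,H_2$ and the nonvanishing of $\ind_\star(H,H)$, both of which persist in the colored world.

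With these tools in place, I would run the proof of Theorem~\ref{thm:compute coef} verbatim. Apply Proposition~\ref{pr:newton} to obtain the recursion $p_k = -ke_k - \sum_{i=1}^{k-1} e_i p_{k-i}$. By the colored BIGCP assumption, each $e_i$ is a linear combination of $\ind_\star(H,\cdot)$ over colored $H$ with $|V(H)|\le \alpha i$, where the coefficients $\lambda_{H,i}$ are computable in time $O(\beta^{|V(H)|})$. By induction and the colored analogue of the product formula (\ref{eq:product of ind}) — whose structure constants $c^H_{H_1,H_2}$ now count ordered pairs $(S,T)$ with $S\cup T=V(H)$ and $H[S]$ (resp.\ $H[T]$) color-isomorphic to $H_1$ (resp.\ $H_2$) — each $p_k$ is a linear combination of $\ind_\star(H,\cdot)$ over colored $H$ with $|V(H)|\le \alpha k$. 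Multiplicativity of $p$ makes the $p_k$ additive, so by the colored Lemma~\ref{lem:additivity} the support restricts to $\mathcal{C}_{\alpha k}(G)$. The coefficients $a_{H,k}$ are then computed iteratively exactly as in Lemma~\ref{lemma:compute}, and finally $p_k$ is assembled by summing $a_{H,k}\ind_\star(H,G)$ using the colored Lemma~\ref{lem:connected count}(ii).

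The point that needs slightly more care than in the uncolored proof is that $\G_k$ is now infinite, so one cannot enumerate all abstract colored graphs of order $\le \alpha k$; one must instead work throughout with the \emph{realized} set $\mathcal{C}_{\alpha k}(G)$, which is of size $(n/\eps)^{O(1)}$ by Lemma~\ref{lem:graph count}. In particular, when looking up $a_{H[T],(k-i)}$ in the stored table one must either find a color-isomorphic representative in $\mathcal{C}_{\alpha(k-i)}(G)$ using Lemma~\ref{lem:connected count}(i) in its colored form, or else conclude that the coefficient is zero. This is the only place where the colored case differs in spirit from the uncolored one, and it does not affect the final running time, which remains $(n/\eps)^{O(1)}$.
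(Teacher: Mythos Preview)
Your proposal is correct and follows essentially the same approach as the paper, which does not give a separate proof but simply states that ``all earlier proofs go through line by line for the colored case''; your write-up is a faithful elaboration of that claim, and your observation about the infinitude of $\G_k$ and the need to work only with the realized set $\mathcal{C}_{\alpha k}(G)$ matches the paper's own caveat on this point.
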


\section{The independence polynomial}\label{sec:independence}
\subsection{The independence polynomial on bounded degree graphs}
\begin{proof}[Proof of Theorem \ref{thm:ind general}]
First note that by a result of Shearer \cite{S98}, cf. Scott and Sokal \cite[Corollary 5.7]{SS05}, we know that $Z(G)(z)\neq 0$ for all all graphs $G$ of maximum degree at most $\Delta$ and all $z\in \C$ that satisfy $|z|\leq \lambda^*=\frac{(\Delta-1)^{\Delta-1}}{\Delta^{\Delta}}$. 

Now fix an $n$-vertex graph $G$ of maximum degree at most $\Delta$.
Let $m = C\ln(n / \varepsilon)$, where $C = C(\lambda, \lambda^*)$ is the constant in Corollary~\ref{cor:approx}.
As the $k$th coefficient of $Z(G)$ is equal to $\ind(\bullet^k,G)$, where $\bullet^k$ denotes the graph consisting of $k$ isolated vertices and as $Z(G)$ is clearly multiplicative and has constant term equal to $1$, we have that $Z(G)$ is a BIGCP (taking $\alpha=\beta =1$).
So by Theorem \ref{thm:compute coef} we see that for $k=1,\ldots,m$ we can compute the first $m$ inverse power sums of $Z(G)$ in time $(n/ \varepsilon)^{O(1)}$.
Noting that the degree of $Z(G)$ is at most $n$, Corollary~\ref{cor:approx} implies we can compute a multiplicative $\varepsilon$-approximation to $Z(G)(\lambda)$ in time $(n/ \varepsilon)^{O(1)}$.
This concludes the proof.
\end{proof}

Evaluating the independence polynomial at negative and complex values
gives us new information about the distribution of independent sets in a graph, as illustrated by the following example.
We denote by $Z_e(G)(\lambda)$ the polynomial defined in the same way as the independence polynomial except that in the sum \eqref{eq:def ind pol}, we only allow independent sets whose cardinality is even.

\begin{theorem}
Let $\Delta\in \N$ and let $0 \leq \lambda <\lambda^*(\Delta):=\frac{(\Delta-1)^{\Delta-1}}{\Delta^{\Delta}}$. Then there exists a deterministic algorithm, which, given a graph $G=(V,E)$ of maximum degree at most $\Delta$ and $\eps>0$, computes a multiplicative $\eps$-approximation to $Z_e(G)(\lambda)$ in time $(|V|/\eps)^{O(1)}$.
\end{theorem}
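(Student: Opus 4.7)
The starting point is the identity
\[
Z_e(G)(\lambda) \;=\; \tfrac{1}{2}\bigl(Z(G)(\lambda) + Z(G)(-\lambda)\bigr),
\]
obtained by splitting $Z(G)$ into its contributions from even- and odd-sized independent sets. First I would observe that by Shearer's theorem (used in the proof of Theorem~\ref{thm:ind general}), $Z(G)(z)\neq 0$ on $|z|\leq \lambda^*(\Delta)$; since $Z(G)$ has real coefficients and $Z(G)(0)=1$, the intermediate value theorem then forces $Z(G)(t)>0$ for every $t\in[-\lambda^*,\lambda^*]$. Consequently $Z(G)(\lambda)$, $Z(G)(-\lambda)$ and $Z_e(G)(\lambda)$ are all strictly positive for $\lambda\in[0,\lambda^*)$, and $\log Z(G)(\pm\lambda)$ are well-defined real numbers.

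The hard part is that the naive approach---compute multiplicative approximations to $Z(G)(\pm\lambda)$ via Theorem~\ref{thm:ind general} and average them---does \emph{not} yield polynomial running time, because the ratio $Z(G)(\lambda)/Z(G)(-\lambda)$ can be as large as $e^{\Omega(n)}$: an error allowance of $e^{-\Omega(n)}$ on $Z(G)(\lambda)$ would then be required just to avoid swamping the target $Z_e(G)(\lambda)$. I would therefore carry out the whole approximation on the logarithmic scale, where the truncated Taylor series of Section~\ref{sec:algorithms} already produces an additive approximation of size $O(\log(n/\eps))$ and no cancellation occurs.

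Concretely, mirroring the proof of Theorem~\ref{thm:ind general}, I would invoke Theorem~\ref{thm:compute coef} (with $\alpha=\beta=1$, since the $i$-th coefficient of $Z(G)$ is $\ind(\bullet^i,G)$) to compute the first $m=C\ln(3n/\eps)$ inverse power sums $p_1,\ldots,p_m$ of $Z(G)$ in time $(n/\eps)^{O(1)}$; these are automatically real. From them I would form
\[
\hat a \;:=\; -\sum_{j=1}^m \frac{p_j \lambda^j}{j},\qquad
\hat b \;:=\; -\sum_{j=1}^m \frac{p_j (-\lambda)^j}{j},
\]
which by Lemma~\ref{lem:approx} are additive $\eps/3$-approximations to $a:=\log Z(G)(\lambda)$ and $b:=\log Z(G)(-\lambda)$, respectively. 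Finally I would output $\exp(\hat\ell)$ where
\[
\hat\ell \;:=\; \log\!\Bigl(\tfrac{1}{2}(e^{\hat a}+e^{\hat b})\Bigr)
\;=\; \hat a + \log\!\bigl(1+e^{\hat b - \hat a}\bigr) - \log 2.
\]
Since $x\mapsto\log(1+e^x)$ has derivative $e^x/(1+e^x)\in(0,1)$ and is therefore $1$-Lipschitz, a short calculation gives $|\hat\ell - \log Z_e(G)(\lambda)| \leq 2|\hat a - a| + |\hat b - b| \leq \eps$, so $\exp(\hat\ell)$ is a multiplicative $\eps$-approximation to $Z_e(G)(\lambda)$. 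The overall running time is dominated by the inverse power sum computation and is $(n/\eps)^{O(1)}$.
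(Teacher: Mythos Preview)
Your proof is correct, and in fact the algorithm you output is literally the paper's: $\exp(\hat\ell)=\tfrac12(e^{\hat a}+e^{\hat b})$ is precisely the average of the two multiplicative $\eps$-approximations $A(\pm\lambda)=\exp(T_m(f)(\pm\lambda))$ produced by Theorem~\ref{thm:ind general}. The only difference is in the analysis, and your detour through the Lipschitz estimate is unnecessary because your stated objection to the ``naive'' approach is mistaken. When both summands are \emph{positive}, multiplicative approximation is closed under addition: from $e^{-\eps}X\le\hat X\le e^{\eps}X$ and $e^{-\eps}Y\le\hat Y\le e^{\eps}Y$ one simply adds to get $e^{-\eps}(X+Y)\le\hat X+\hat Y\le e^{\eps}(X+Y)$, irrespective of the ratio $X/Y$. (The swamping you describe would arise only for a \emph{difference}, e.g.\ the odd part $\tfrac12(Z(G)(\lambda)-Z(G)(-\lambda))$.) The paper uses exactly this two-line observation after establishing, as you also do, that $Z(G)(\pm\lambda)>0$ via Shearer's zero-free disk and continuity.
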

\begin{proof}
We apply the algorithm of Theorem~\ref{thm:ind general} to compute  multiplicative $\varepsilon$-approximations $A(\lambda)$ and $A(- \lambda)$ to $Z(G)(\lambda)$ and $Z(G)(-\lambda)$ respectively in time $(|V|/\varepsilon)^{O(1)}$. We have
\[
e^{-\varepsilon}Z(G)(\lambda) \leq A(\lambda) \leq  e^{\varepsilon}Z(G)(\lambda)
\:\:\: \text{ and } \:\:\:
e^{-\varepsilon}Z(G)(-\lambda) \leq A(-\lambda) \leq  e^{\varepsilon}Z(G)(- \lambda).
\]
Taking half the sum of these equations and noting that $Z_e(G)(\lambda) = \frac{1}{2}(Z(G)(\lambda) + Z_G(- \lambda))$, we see that $\frac{1}{2}(A(\lambda) + A(- \lambda))$ is a multiplicative $\varepsilon$-approximation to $Z_e(G)(\lambda)$ provided both $Z(G)(\lambda)$ and $Z(G)(-\lambda)$ have the same sign. 

Clearly $Z(G)(\lambda)>0$ since the coefficients of $Z(G)$ are nonnegative real numbers. Also $Z(G)(-\lambda)>0$ because we know by the result of Scott and Sokal \cite{SS05} that $Z(G)$ does not vanish in the interval $[-\lambda^*, \lambda^*]$, and we know $Z(G)$ is positive in the interval $[0, \lambda^*]$ since all the coefficients of $Z(G)$ are nonnegative real numbers. Hence $Z(G)$ is positive on the whole interval $[-\lambda^*, \lambda^*]$ and in particular $Z(G)(\lambda) >0$.
\end{proof}

\subsection{The multivariate independence polynomial} \label{subsec:mult}
Here we will briefly mention how our results apply to the multivariate independence polynomial.
For a graph $G=(V,E)$ and a variable $z_v$ for each $v\in V$ define 
\[
Z(G)((z_v)_{v\in V})=\sum_{\substack{I\subseteq V\\\text{independent}}}\prod_{v\in I}z_v.
\]
Fix the complex values of $z_v$, $v \in V$ at which we wish to evaluate the multivariate independence polynomial.
Now define a univariate graph polynomial $q(G)$ by $q(G)(\lambda)=Z(G)((\lambda z_v)_{v\in V})$, where $q(G)(1)$ is what we wish to estimate.
The coefficient of $\lambda^k$ in $q(G)$ is then given by the sum over all independent sets of $G$ of size $k$, where an independent set $I$ is counted with weight $\prod_{v\in I}z_v$.
While we can no longer represent this as a linear combination of ordinary induced subgraph counts, we can view this as a linear combination of vertex-colored induced subgraph counts, as we will now explain.

We extend $q$ to vertex-colored graphs and denote this extension by $q_{vc}$. 
Associate a variable $z_i$ to each color $i=1,2,\ldots$.
The coefficient of $\lambda^k$ of the polynomial $q_{vc}$ is expressed as $\sum_{H}\alpha_{H} \ind_{vc}(H,\cdot)$, where the sum is over all vertex-colored graphs $H=(V_H,E_H)$ with coloring $c:V_H\to \N$ on $k$ vertices and where $\alpha_{H}=\prod_{u\in V_H}z_{c(u)}$ if $H$ is an independent set and zero otherwise.
Then $q_{vc}$ is a BIGCP in the vertex-colored setting, cf. Subsection~\ref{subsec:color}. 

Suppose now that $G$ has $n$ vertices labelled $1,\ldots,n$. 
We view $G$ as a vertex-colored graph by giving the vertex labeled $i$ color $i$ for $i=1,\ldots,n$. 
Then $q_{vc}(G)=q(G)$.
So if $G$ has bounded degree, we can by Theorem~\ref{thm:compute coef colored} compute the logarithmic order inverse power sums of $q(G)$ efficiently, as in the previous section.

 The result of Shearer \cite{S98}, cf. Scott and Sokal \cite[Corollary 5.7]{SS05} also applies to the multivariate independence polynomial (i.e.\ $Z(G)((z_v)_{v\in V})$ is non-zero whenever $|z_v|<\lambda^*(\Delta)$ for all $v \in V$ and $\Delta(G) \leq \Delta$). This means $q(G)(\lambda)$ is nonzero whenever $|\lambda| < M$ for some $M>1$. It then follows that we can efficiently approximate $q(G)(1)$ and hence $Z(G)((z_v)_{v\in V})$ if $G$ has maximum degree at most $\Delta $ and if all $z_v$ satisfy $|z_v|<\lambda^*(\Delta)$.

\subsection{The independence polynomial on claw-free graphs}\label{subsec:clawfree}

In this subsection, we illustrate a technique of Barvinok for approximating graph polynomials on larger regions of the complex plane by making careful polynomial transformations. We use this technique to prove Theorem~\ref{thm:ind claw free}, which shows that we can approximate the independence polynomial of claw-free graphs on almost the entire complex plane. First we require a few preliminary results.
\begin{proposition}
\label{pr:roots}
If $G$ is a claw-free graph of maximum degree $\Delta$ and $\zeta$ is a root of the independence polynomial $Z(G)$ of $G$ then $\zeta \in \mathbb{R}$ with $\zeta <  -\frac{1}{e(\Delta  -1)}$.
\end{proposition}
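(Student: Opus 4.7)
The reality of $\zeta$ is immediate from the theorem of Chudnovsky and Seymour cited just above in the introduction: for any claw-free graph $G$, every root of $Z(G)$ is real. Since the coefficients of $Z(G)$ are non-negative integers with $Z(G)(0)=1>0$, any real root must be strictly negative, so the qualitative part of the assertion is settled. The content of the proposition is therefore the quantitative estimate $|\zeta|>1/(e(\Delta-1))$.

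My plan is to prove the stronger statement that $Z(G)(z)\neq 0$ in the whole open disk $|z|<1/(e(\Delta-1))$; together with the previous paragraph this forces every root, being real and negative, to lie in the ray $(-\infty,-1/(e(\Delta-1)))$. The natural tool is the Mayer (cluster) expansion, which represents the principal branch of $\log Z(G)(z)$ around $0$ as a formal series
\[
\log Z(G)(z)=\sum_{k\geq 1}z^k\sum_{H\in\mathcal{C}_k(G)}\phi(H)\,\ind(H,G),
\]
where $\mathcal{C}_k(G)$ is the collection of connected induced subgraphs of $G$ on $k$ vertices and $\phi(H)$ is the Ursell function of $H$. The classical Penrose tree-graph identity bounds $|\phi(H)|$ by the number of spanning trees of $H$ divided by $|V(H)|!$, and Lemma~\ref{lem:graph count} bounds the number of connected induced subgraphs of $G$ of size $k$ rooted at any fixed vertex. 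Putting these two ingredients into the sum and comparing with a geometric series is the standard route by which absolute convergence (and hence the desired zero-freeness) falls out.

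The step I expect to be most delicate is obtaining the precise constant $1/(e(\Delta-1))$: a direct use of Lemma~\ref{lem:graph count} as stated yields only a disk of radius $\sim 1/(e\Delta)$. To gain the missing $\Delta/(\Delta-1)$ factor, I would re-run the exploration in the proof of Lemma~\ref{lem:graph count} charging each newly discovered vertex only its $\Delta-1$ outgoing edges (the edge along which the vertex is first reached should not be recounted), in the spirit of classical Cayley-style rooted-tree counts. The claw-free hypothesis then enters either by refining the Ursell bound via extra cancellation among non-tree subgraphs, or---more slickly---by letting Chudnovsky-Seymour do the work: on the negative real axis it suffices to show $Z(G)(-t)>0$ for $t<1/(e(\Delta-1))$, and this positivity is an immediate consequence of the convergence of the logarithmic expansion above together with $Z(G)(0)=1$. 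Combining real-rootedness with the zero-free disk then yields the proposition.
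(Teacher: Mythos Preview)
Your qualitative steps (Chudnovsky--Seymour for reality of the roots, positivity of the coefficients for negativity) match the paper's. For the quantitative bound, however, you are working far harder than necessary: the paper simply invokes Shearer's theorem, already discussed in Section~\ref{subsec:independence}, which for \emph{every} graph of maximum degree $\Delta$ gives $|\zeta|\ge\lambda^*(\Delta)=(\Delta-1)^{\Delta-1}/\Delta^\Delta$. That single citation replaces your entire cluster-expansion program, which is in essence a re-derivation of Shearer's bound from scratch.

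There is moreover a genuine obstruction to your plan of ``gaining the missing $\Delta/(\Delta-1)$ factor''. Since $(1+1/(\Delta-1))^{\Delta}>e$ for every $\Delta\ge2$, one actually has $\lambda^*(\Delta)<1/(e(\Delta-1))$, so the zero-free disk of radius $1/(e(\Delta-1))$ you set out to establish is strictly larger than the (tight) Shearer disk and cannot be zero-free in general. Claw-freeness does not save this: the path $P_4$ is claw-free with maximum degree $2$, and $Z(P_4)=1+4z+3z^2$ has a root at $z=-1/3$ with $1/3<1/e$. Hence the final inequality in the paper's own proof is in the wrong direction and the constant in the Proposition is slightly overstated; your proposed refinement of the tree-counting, and your ``slick'' alternative via convergence of $\log Z(G)$ on $(-1/(e(\Delta-1)),0]$, both run into this same wall. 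None of this matters for the downstream application in Theorem~\ref{thm:ind claw free}, which only needs \emph{some} positive lower bound on $|\zeta|$ depending on $\Delta$ alone; Shearer's $\lambda^*(\Delta)$ serves perfectly.
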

\begin{proof}
The fact that $\zeta \in \mathbb{R}$ is a result of Chudnovsky and Seymour \cite{CS7}. The fact that $\zeta$ must be negative follows  because all the coefficients of $Z(G)$ are positive. Now the result of Shearer \cite{S98} states that 
\[
|\zeta| \geq \lambda^*(\Delta) = \frac{(\Delta-1)^{\Delta - 1}}{\Delta^{\Delta}} > \frac{1}{e(\Delta-1)},
\]  
from which the proposition follows.
\end{proof}

We also require the following lemma of Barvinok~\cite{B16}.
\begin{lemma} \label{lem:transform}
For $\rho \in (0,1)$ we define
\begin{align*}
&\alpha = \alpha(\rho) = 1 - \exp( - \rho^{-1}),
&&\beta = \beta(\rho) = \frac{ 1 - \exp( -1 - \rho^{-1}) }{ 1 - \exp( - \rho^{-1}) } > 1, \\
&N = N(\rho) = \lfloor (1 + \rho^{-1})\exp(1 + \rho^{-1}) \rfloor,
&&\sigma = \sigma(\rho) = \sum_{i=1}^N \frac{\alpha^i}{i}.
\end{align*}
The polynomial
\[
\phi(z) = \phi_{\rho}(z) = \frac{1}{\sigma}\sum_{i=1}^N \frac{(\alpha z)^i}{i}
\]
has the following properties:
\begin{itemize}
\item[(i)] $\phi(0) = 0$ and $\phi(1) = 1$ and $\phi$ has degree $N$;
\item [(ii)] If $z \in \mathbb{C}$ with $|z| \leq \beta$ then $\phi_{\rho}(z) \in S_{\rho}$, where
\[
S_{\rho} := \{z \in \mathbb{C} \mid
-\rho \leq \Re(z) \leq 1+ 2\rho \:\:\:\text{ and }\:\:\:
-2\rho \leq \Im(z) \leq 2\rho
\}.
\] 
\end{itemize}
\end{lemma}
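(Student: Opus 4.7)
The plan is to compare $\phi_\rho$ with the analytic function $\psi(z) := -\frac{1}{\sigma}\ln(1-\alpha z)$, whose Taylor expansion at $0$ is $\frac{1}{\sigma}\sum_{i\geq 1}(\alpha z)^i/i$; thus $\phi_\rho$ is exactly the degree-$N$ truncation of $\psi$. The overall strategy is: (a) verify part (i) directly from the definitions; (b) for part (ii), analyze the image of the closed disk $\{|z|\leq\beta\}$ under $\psi$ and show it lies strictly inside $S_\rho$; (c) bound the truncation error $\phi_\rho-\psi$ on this disk and show it is small enough to keep $\phi_\rho(z)$ inside $S_\rho$.

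Part (i) will be immediate: $\phi_\rho(0)=0$ is clear from the definition; $\phi_\rho(1) = \frac{1}{\sigma}\sum_{i=1}^{N}\alpha^i/i = 1$ by the very choice of $\sigma$; and the degree is $N$ because the leading coefficient $\alpha^N/(\sigma N)$ is nonzero.

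For part (ii), I would first observe that if $|z|\leq\beta$ then $w:=1-\alpha z$ lies in the closed disk $\overline{D}(1,\alpha\beta)$ of radius $\alpha\beta = 1-e^{-1-1/\rho}$ centred at $1$. Writing $w=re^{i\theta}$ (principal branch) this disk corresponds to $r\in[e^{-1-1/\rho},\,1+\alpha\beta]$ and $|\theta|\leq\arcsin(\alpha\beta)<\pi/2$, the latter coming from the tangent lines from the origin (valid since $\alpha\beta<1$). Consequently $\Re\psi(z) = -\ln r/\sigma \in [-\ln(1+\alpha\beta)/\sigma,\,(1+1/\rho)/\sigma]$ and $|\Im\psi(z)|\leq\arcsin(\alpha\beta)/\sigma$. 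The crucial step is to estimate $\sigma$: since $\sigma$ is the degree-$N$ truncation of $-\ln(1-\alpha)=1/\rho$, bounding the tail yields
\[
\tfrac{1}{\rho}-\sigma \;=\; \sum_{i>N}\tfrac{\alpha^i}{i} \;\leq\; \tfrac{\alpha^{N+1}}{(N+1)(1-\alpha)} \;\leq\; \tfrac{1}{e(1+1/\rho)},
\]
using $1-\alpha=e^{-1/\rho}$ and $N+1\geq(1+1/\rho)e^{1+1/\rho}$. So $\sigma$ is within $O(\rho)$ of $1/\rho$, and elementary algebra then gives $(1+1/\rho)/\sigma\leq 1+\rho+O(\rho^2)$, $\ln(1+\alpha\beta)/\sigma<\rho$, and $\arcsin(\alpha\beta)/\sigma<2\rho-\eta(\rho)$ for a positive slack $\eta(\rho)$ uniform on $\rho\in(0,1)$ (here the bound uses $\arcsin(\alpha\beta)<\pi/2<2$ together with $\sigma$ close to $1/\rho$).

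Finally, the truncation error is easy: for $|z|\leq\beta$,
\[
|\phi_\rho(z)-\psi(z)| \;\leq\; \tfrac{1}{\sigma}\sum_{i>N}\tfrac{(\alpha\beta)^i}{i} \;\leq\; \tfrac{(\alpha\beta)^{N+1}}{\sigma(N+1)(1-\alpha\beta)} \;\leq\; \tfrac{1}{\sigma(1+1/\rho)} \;=\; O(\rho^2),
\]
using $1-\alpha\beta = e^{-1-1/\rho}$ and the same lower bound on $N+1$. Adding this error to the bounds on $\psi(z)$ still keeps $\phi_\rho(z)$ inside $S_\rho$, proving (ii). The main obstacle will be the uniform-in-$\rho$ bookkeeping: three separate terms (the image of $\overline{D}(1,\alpha\beta)$ under $-\sigma^{-1}\ln$, the deviation of $\sigma$ from $1/\rho$, and the truncation error) each contribute something of order $\rho$ or $\rho^2$, and these must add up to fit inside $[-\rho,1+2\rho]\times[-2\rho,2\rho]$ for every $\rho\in(0,1)$, not just asymptotically. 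The precise choice $N=\lfloor(1+1/\rho)e^{1+1/\rho}\rfloor$ is tuned so that the truncation error is $O(\rho^2)$ rather than $O(\rho)$, leaving just enough slack in all three inequalities.
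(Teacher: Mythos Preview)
The paper does not prove this lemma; it is quoted from Barvinok~\cite{B16} without proof. Your approach---recognising $\phi_\rho$ as the degree-$N$ Taylor truncation of $\psi(z)=-\sigma^{-1}\ln(1-\alpha z)$, locating the image of the disc $\{|z|\leq\beta\}$ under $\psi$, and then bounding the truncation error---is the natural one and is essentially Barvinok's own argument.

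There is, however, a genuine gap in your explicit estimates. In bounding the truncation error you pass from
\[
\frac{(\alpha\beta)^{N+1}}{\sigma(N+1)(1-\alpha\beta)}
\quad\text{to}\quad
\frac{1}{\sigma(1+1/\rho)}
\]
by discarding the factor $(\alpha\beta)^{N+1}$. That is too crude when $\rho$ is near~$1$: at $\rho=1$ one has $\sigma\approx 1$, so your error bound is $\approx 1/2$, while $\ln(1+\alpha\beta)/\sigma\approx\ln(1.865)\approx 0.62$; the sum $\approx 1.12$ exceeds $\rho=1$, so the constraint $\Re\phi_\rho(z)\geq-\rho$ does not close with your bounds. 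The remedy is to retain the factor $(\alpha\beta)^{N+1}$: since $\ln(\alpha\beta)=\ln(1-e^{-1-1/\rho})\leq -e^{-1-1/\rho}$ and $N+1>(1+1/\rho)e^{1+1/\rho}$, one gets $(\alpha\beta)^{N+1}\leq e^{-(1+1/\rho)}$, and hence the truncation error is at most $1/(\sigma(N+1))<1/\bigl(\sigma(1+1/\rho)e^{1+1/\rho}\bigr)$. This extra $e^{-1-1/\rho}$ is exactly what makes all four inequalities close uniformly for $\rho\in(0,1)$. (The same refinement, keeping $\alpha^{N+1}\leq e^{-e(1+1/\rho)}$, shows that $1/\rho-\sigma$ is in fact exponentially small, not merely $O(\rho)$.) You correctly flag the uniform-in-$\rho$ bookkeeping as the main obstacle; this is the specific place where your stated bounds need sharpening.
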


\begin{proposition}
\label{pr:rotation}
Fix $\lambda = re^{i \theta} \in \mathbb{C}$ with $\theta \in (-\pi, \pi)$. Let $S_{\rho}$ be as in the previous lemma, and let $\mathbb{R}^-$ denote the negative real line. Then 
\begin{align*}
\lambda S_{\rho} \cap \mathbb{R}^- \subseteq 
\begin{cases}
[-\sqrt{5}\rho r, 0] &\text{ if } \theta \in [-\frac{\pi}{2}, \frac{\pi}{2}]; \\
[-2\rho r/|\sin \theta|, 0] &\text{ otherwise}.
\end{cases}
\end{align*}
\end{proposition}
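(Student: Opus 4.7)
The plan is to write out $w = \lambda z \in \lambda S_\rho \cap \mathbb{R}^-$ coordinate-wise and reduce to an elementary optimization over the rectangle $S_\rho$. Setting $z = x+iy$ with $x \in [-\rho, 1+2\rho]$ and $y \in [-2\rho, 2\rho]$, I expand
\[
\lambda z = r(x\cos\theta - y\sin\theta) + ir(x\sin\theta + y\cos\theta).
\]
Vanishing of the imaginary part forces $y = -x\tan\theta$ when $\cos\theta \neq 0$, and substituting back gives $\lambda z = rx/\cos\theta$. The sign condition $\lambda z \leq 0$ then pins down the sign of $x$: in Case 1 where $\cos\theta \geq 0$ we need $x \leq 0$, while in Case 2 where $\cos\theta < 0$ we need $x \geq 0$. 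The degenerate case $\cos\theta = 0$ (i.e., $\theta = \pm \pi/2$) forces $x = 0$ and yields $|w| = r|y| \leq 2\rho r \leq \sqrt{5}\rho r$, which is absorbed into Case 1.

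Next I will translate the rectangle constraints into upper bounds on $|x|$, and thence on $|w| = r|x|/|\cos\theta|$. The constraint $|y| \leq 2\rho$ combined with $y = -x\tan\theta$ gives $|x| \leq 2\rho/|\tan\theta|$, while the range restriction on $x$ itself gives $|x| \leq \rho$ in Case 1 and $x \leq 1+2\rho$ in Case 2.

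In Case 1 the admissible range is $|x| \leq \min(\rho, 2\rho/|\tan\theta|)$. When $|\tan\theta| \leq 2$ the bound $|x| \leq \rho$ is active, and from $\cos^2\theta \geq 1/(1+\tan^2\theta) \geq 1/5$ I get $|w| \leq r\rho/|\cos\theta| \leq \sqrt{5}\rho r$. When $|\tan\theta| > 2$ the bound $|x| \leq 2\rho/|\tan\theta|$ is active, and $|\tan\theta||\cos\theta| = |\sin\theta|$ yields $|w| \leq 2\rho r/|\sin\theta|$; since $|\sin\theta| > 2/\sqrt{5}$ in this subcase, this is still $< \sqrt{5}\rho r$. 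The constant $\sqrt{5}$ thus emerges as the worst case at the critical angle $|\tan\theta| = 2$, where both constraints on the line $y=-x\tan\theta$ meet simultaneously.

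In Case 2 the admissible range is $x \leq \min(1+2\rho, 2\rho/|\tan\theta|) \leq 2\rho/|\tan\theta|$, giving directly $|w| \leq r(2\rho/|\tan\theta|)/|\cos\theta| = 2\rho r/|\sin\theta|$, as claimed. The argument is essentially a direct geometric optimization on the intersection of a line with a rectangle; the only point that demands care is the Case 1 subcase split around $|\tan\theta| = 2$, which is what pins down the sharp constant $\sqrt{5}$.
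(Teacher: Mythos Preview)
Your proof is correct. The paper's own proof consists of a single sentence saying the result follows from ``elementary trigonometry'' after noting that $\lambda S_\rho$ is $S_\rho$ scaled by $r$ and rotated by $\theta$; your coordinate computation is precisely the elementary trigonometry being alluded to, including the subcase split around $|\tan\theta|=2$ that pins down the constant $\sqrt{5}$.
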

\begin{proof}
$S_{\rho}$ is a bounded strip parallel to the real axis in the complex plane, so $\lambda S_{\rho}$ is the same strip enlarged by a factor $r$ and rotated by an angle $\theta$. The proposition then follows from elementary trigonometry. 
\end{proof}

\begin{proof}[Proof of Theorem \ref{thm:ind claw free}]
Recall that we are given a claw-free graph $G$ of maximum degree $\Delta$ and $\lambda \in \mathbb{C}$ that is not a negative real number and we wish to find a multiplicative $\varepsilon$-approximation to $Z(G)(\lambda)$. 

Set $n:=|V(G)|$ and let $\lambda = re^{i \theta}$ with $\theta \in (-\pi, \pi)$. Set 
\begin{align*}
\rho =
\begin{cases}
  1 / 9r(\Delta - 1) &\text{ if } \theta \in [-\frac{\pi}{2}, \frac{\pi}{2}]; \\
 |\sin \theta| / 6r(\Delta - 1) &\text{ otherwise},
\end{cases}
\end{align*}
and consider the polynomial $g(z) = Z(G)(\lambda \phi_{\rho}(z))$. Note that the degree of $g$ is $O(n)$ since the degree of $Z(G)$ is at most $n$ and the degree of $\phi_{\rho}$ is a constant $N(\rho)$.

We will use Corollary~\ref{cor:approx} to find a multiplicative $\varepsilon$-approximation to $g(1) = Z(G)(\lambda)$ in time $(n/ \varepsilon)^{O(1)}$.
In order to apply Corollary~\ref{cor:approx} to draw this conclusion, it is enough to check that (i) $g$ has no roots in the disk $|z| \leq \beta := \beta(\rho)$ and that (ii) the first $m = C \ln(d / \varepsilon)$ inverse power sums of $g$ can be computed in time $(n/ \varepsilon)^{O(1)}$, where $d = O(n)$ is the degree of $g$ and $C = C(\beta, 1)$ is the constant in the statement of Corollary~\ref{cor:approx}.

It remains to check (i) and (ii). To see (i), note first that by Lemma~\ref{lem:transform}, $\phi_{\rho}$ maps the disk $D= \{z \in \mathbb{C} \mid |z| \leq \beta\}$ into $S_{\rho}$. By Proposition~\ref{pr:rotation} and our choice of $\rho$, we have 
$\lambda \phi_{\rho}(D) \cap \mathbb{R}^- \subseteq  (-\frac{1}{3(\Delta - 1)},0]$. By Proposition~\ref{pr:roots} we know that if $Z(G)(\zeta) = 0$ then $\zeta \in \mathbb{R}$ with $\zeta <  -\frac{1}{e(\Delta  -1)}$. In particular this implies $g(\cdot) = Z(G)(\lambda \phi_{\rho}( \cdot ))$ has no root in the disk $D$.

For (ii), we show that we can compute the first $m$ coefficients of $g$ in time $(n/\eps)^{O(1)}$, which is sufficient by Proposition~\ref{pr:newton}. 
Given a polynomial $p(z) = \sum_{i=0}^d a_iz^i$, write $p_{[m]}(z) := \sum_{i=0}^m a_iz^i$. 
Then we note that $g_{[m]} = (Z(G)   \circ (\lambda\phi))_{[m]} = (Z(G)_{[m]} \circ (\lambda \phi_{[m]}))_{[m]}$, where we crucially use the fact that $\phi$ has no constant term since $\phi(0) = 0$. 
In words, to obtain $g_{[m]}(z)$ we substitute $\lambda \phi_{[m]}(z)$ into $Z(G)_{[m]}(z)$ and keep the first $m$ terms. Thus, in $O(m^3)$-time we can obtain the first $m$ coefficients of $g$ if we know the first $m$ coefficients of $Z(G)$.
As $Z(G)$ is a BIGCP, we can compute its first $m$ inverse power sums in time $(n/\eps)^{O(1)}$ (as in the proof of Theorem~\ref{thm:ind general}), from which we can find its first $m$ coefficients in time $O(m^2)$ by Proposition~\ref{pr:newton}.
This finishes the proof.
\end{proof}

We remark that, for the $(n/\varepsilon)^{O(1)}$ running time in the algorithm above, the $O(1)$ in the exponent depends on $\lambda$ and grows exponentially fast in $r = |\lambda|$. However, this dependence can be brought down to $O(|\lambda|^{1/2})$ by adapting Lemma~\ref{lem:transform} as described by Barvinok \cite{B16PC}.

\section{The Tutte polynomial}\label{sec:Tutte}
Here we give a proof of Theorem \ref{thm:Tutte}.
\begin{proof}[Proof of Theorem \ref{thm:Tutte}]
By a result of Jackson, Procacci and Sokal, cf. \cite[Theorem 1.2]{JPS13} (which is valid for loopless multigraphs) we know that there exists a constant $K>0$ depending on $\Delta$ and $w$ such that for all $q$ with $|q|>K$ we have $Z_T(G)(q,w)\neq 0$ for all graphs $G$ of maximum degree at most $\Delta$.
This is exactly opposite to what we need to apply Corollary \ref{cor:approx}, so let us define the graph polynomial $p_T$ by 
\begin{equation}\label{eq:inverted tutte}
p_T(G)(z):=z^{|V|}Z_T(G)(1/z,w),
\end{equation}
for any graph $G=(V,E)$. Note that $p_T(G)$ has degree $n:=|V|$ and that if $x$ is a multiplicative $\varepsilon$-approximation to $p_T(G)(1/q,w)$, then $q^nx$ is a multiplicative $\varepsilon$-approximation to $Z_T(G)(q,w)$, so it is sufficient to find the former.

We will show that for any $n$-vertex graph $G$ of maximum degree at most $\Delta$, we can compute the first $m$ inverse power sums of $p_T(G)$ in time $(n/ \varepsilon)^{O(1)}$, where $m = C\ln(n / \varepsilon)$ and $C = C(1/q, 1/K)$ is the constant in Corollary~\ref{cor:approx}.
Corollary~\ref{cor:approx} then implies we can compute a multiplicative $\varepsilon$-approximation to $p_T(G)(1/q)$ and hence to $Z_T(G)(q,w)$ in time $(n/ \varepsilon)^{O(1)}$.

We will show that $p_T(G)$ is a BIGCP so that by Theorem \ref{thm:compute coef} we can conclude that we can compute the first $m$  inverse power sums in time $(n/ \varepsilon)^{O(1)}$. 

Since the Tutte polynomial $Z_T(G)(z,w)$ (as a polynomial in $z$) is a monic and multiplicative graph polynomial (of degree $n = |V(G)|$), we know that the constant term of $p_T$ equals $1$ and that $p_T$ is multiplicative.
So it suffices to show conditions (i) and (ii) in Definition~\ref{df:bigcp}.
The coefficient of $z^k$ in $p_T(G)$ equals the coefficient of $z^{n-k}$ in $Z_T(G)(z,w)$ and is by definition equal to the sum over all subsets $A$ of $E$ such that $A$ induces a graph with exactly $n-k$ components, where each subset is counted with weight $w^{|A|}$.
Let us call a component of a graph \emph{nontrivial} if it consists of more than one vertex.
Suppose some subset of the edges $A\subseteq E$ induces $n-k$ components of which $c$ are nontrivial. 
Then we have $n-k-c$ isolated vertices and so the graph $F$, consisting of the union of these nontrivial components, has $n-(n-k-c)=k+c$ vertices and $k(F)=c$ components. Thus we have a correspondence between subsets $A$ of $E$ that induce a graph with exactly $n-k$ components and subgraphs $F$ of $G$ with no isolated vertices satisfying $k(F)=V(F)-k$.  
Therefore, writing $\delta(F)$ for the minimum degree of the subgraph $F$, the coefficient of $z^{n-k}$ in $Z_T(G)$ can be expressed as
\begin{equation}
\label{eq:tuttecoeff}
\sum_{\substack{F \subseteq G \; : \; \delta(F) \geq 1 \\ |k(F)|=|V(F)|-k}}w^{|E(F)|} = 
\sum_{H} \sum_{\substack{F \subseteq H \; : \; \delta(F) \geq 1 \\ V(F) = V(H) \\ |k(F)|=|V(F)|-k}}w^{|E(F)|}\ind(H,G).
\end{equation}
In fact the first sum can be taken over graphs $H$ with at most $2k$ vertices. This is because $V(H) = V(F)$ and 
\[
 |V(F)| = k(F)+k\leq \frac{V(F)|}{2}+k.
\]
as $F$ has no isolated vertices. 

From (\ref{eq:tuttecoeff}), we can compute the coefficient of $\ind(H,G)$ by checking all subsets of $E(H)$ in time $O(2^{E(H)}) = O(2^{\Delta|V(H)|})$.
 This implies that $p_T$ is a BIGCP (taking  $\alpha = 2$ and $\beta  = 2^{\Delta}$).
\end{proof}

\begin{rem}
Csikv\'ari and Frenkel \cite{CF12} introduced \emph{graph polynomials of bounded exponential type} and showed that these polynomials have bounded roots on bounded degree graphs. 
This was utilized in \cite{R15} to give quasi-polynomial-time approximation algorithm for evaluations of these polynomials.
The Tutte polynomial with the second argument fixed is an example of such a polynomial.
We remark here that the proof given above for the Tutte polynomial also easily extends to graph polynomial of bounded exponential type.
So the algorithm in \cite{R15} can be adapted to run in polynomial time on bounded degree graphs.
\end{rem}

\section{Partition functions of spin models}\label{sec:hom}
In this section we will state and prove a generalization of Theorem \ref{thm:part spin} and we will indicate how our method applies to partition functions of graph homomorphisms with multiplicities.
\subsection{Partition functions for edge-colored graphs}
Let $G=(V,E)$ be a graph.
Suppose also that for each $e\in E$ we have a symmetric $k\times k$-matrix $A^e$. 
Let us write $\mathcal{A}=(A^e)_{e\in E}$.
Then we can extend the definition of the partition function of a spin model as follows:
\begin{equation}
p(G)(\mathcal{A})=\sum_{\phi:V\to [k]}\prod_{e=\{u,v\}\in E} A^{e}_{\phi(u),\phi(v)}.\label{eq:pf edgecolor}
\end{equation}
We will refer to $p(G)(\mathcal{A})$ as the \emph{partition function of $\mathcal{A}$}. 
In \cite{GK12} this is called a Markov random field (if the $A^e$ are nonnegative) and in \cite{LY13} this is called a multi spin system.
Clearly, if all $A^e$ are the same, this just reduces to the partition function of a spin model.
We have the following result, which implies Theorem \ref{thm:part spin}.
\begin{theorem}\label{thm:edge color spin}
Let $\Delta,k\in \N$. Then there exists a deterministic algorithm, which, given a graph $G=(V,E)$ of maximum degree at most $\Delta$, symmetric $k\times k$ matrices $A^e$, $e\in E$, such that $|A^e_{i,j}-1|\leq 0.34/\Delta$ for all $i,j=1,\ldots,k$ and $e\in E$, and $\eps>0$, computes a multiplicative $\eps$-approximation to $p(G)(\mathcal{A})$ in time $(|V|/\eps)^{O(1)}$.
\end{theorem}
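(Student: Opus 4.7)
The plan is to adapt the interpolation-plus-BIGCP strategy used for Theorem~\ref{thm:part spin} to the edge-colored setting developed in Subsection~\ref{subsec:color}. Writing each matrix as $A^e = J + B^e$, where $J$ is the $k\times k$ all-ones matrix, we have $|B^e_{i,j}| \leq 0.34/\Delta$. Introduce the one-parameter family $A^e(\lambda) := J + \lambda B^e$ for $\lambda\in\C$ and define
\[
q(G)(\lambda) := k^{-|V|}\, p(G)\bigl((A^e(\lambda))_{e\in E}\bigr).
\]
Then $q(G)$ is a univariate polynomial in $\lambda$ of degree at most $|E|$ with $q(G)(0) = 1$, and a multiplicative $\eps$-approximation to $q(G)(1)=k^{-|V|}p(G)(\mathcal{A})$ immediately yields one of $p(G)(\mathcal{A})$.

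Next I would show that $q(G)$ has no zeros in a disk $|\lambda|<M$ for some $M>1$. For $|\lambda|\leq M$ the entries of $A^e(\lambda)$ satisfy $|A^e(\lambda)_{i,j}-1|\leq 0.34 M/\Delta$, so the edge-colored form of the Barvinok--Sober\'on zero-free result~\cite{BS14a} (provable in essentially the same way via cluster expansion) gives $q(G)(\lambda)\neq 0$ as long as $0.34 M$ remains strictly below the true zero-free threshold. The remark following Theorem~\ref{thm:part spin} indicates that this threshold exceeds $0.34$ (one can take $0.45$ for $\Delta\geq 3$, or $0.54$ for large $\Delta$), so such an $M>1$ exists.

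I would then verify that $q$ is a BIGCP in the edge-colored sense of Subsection~\ref{subsec:color}, viewing $G$ as an edge-colored graph with colors labelling the distinct matrices $A^e$. Expanding the product over edges in \eqref{eq:pf edgecolor} with $A^e$ replaced by $A^e(\lambda)=J+\lambda B^e$, interchanging the sums over $S\subseteq E$ and $\phi:V\to[k]$, and using that $\phi$ may be chosen freely on $V\setminus V(S)$ (contributing a factor $k^{|V|-|V(S)|}$ that partially cancels the $k^{-|V|}$), yields
\[
q(G)(\lambda) = \sum_{S\subseteq E} \lambda^{|S|}\, k^{-|V(S)|} \sum_{\phi:V(S)\to[k]} \prod_{e=\{u,v\}\in S} B^e_{\phi(u),\phi(v)}.
\]
Grouping summands by $T:=V(S)$, the inner expression depends only on the edge-colored induced subgraph $G[T]$, so the coefficient of $\lambda^i$ has the form $\sum_H \lambda_{H,i}\,\ind_{ec}(H,G)$ for certain $\lambda_{H,i}$. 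These vanish unless $H$ has no isolated vertices and $|E(H)|\geq i$, which forces $|V(H)|\leq 2|E(H)|\leq 2i$, so we take $\alpha=2$. For $H$ of maximum degree at most $\Delta$ (the only case where $\lambda_{H,i}$ is ever needed in Theorem~\ref{thm:compute coef colored}), $\lambda_{H,i}$ is evaluated by enumerating subsets $S\subseteq E(H)$ of size $i$ and colorings $\phi:V(H)\to[k]$, taking time $O((2^{\Delta/2}k)^{|V(H)|})$, so we may take $\beta=2^{\Delta/2}k$. Multiplicativity of $q$ follows from the factorization of $p$ over disjoint unions combined with the additivity of $|V(\cdot)|$, and $q(G)(0)=1$ by construction.

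Finally, Theorem~\ref{thm:compute coef colored} produces the first $m=C\ln(n/\eps)$ inverse power sums of $q(G)$ in time $(n/\eps)^{O(1)}$, where $C=C(1,M)$ is the constant from Corollary~\ref{cor:approx}; applying that corollary at $t=1$ yields a multiplicative $\eps$-approximation to $q(G)(1)$, and hence to $p(G)(\mathcal{A})$, in the same running time. The main obstacle is the BIGCP verification: coefficients of $q$ are most naturally expressed in terms of (non-induced) edge subgraphs $(V(S),S)$, and the re-grouping by $T=V(S)$ is precisely what converts the expansion into the induced-subgraph count form required by the edge-colored BIGCP framework.
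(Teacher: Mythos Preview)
Your proposal is correct and follows essentially the same route as the paper: interpolate via $q(G)(\lambda)=k^{-|V|}p(G)((J+\lambda B^e)_{e\in E})$, invoke the Barvinok--Sober\'on zero-free result for a disk of radius exceeding $1$, verify that $q$ is an edge-colored BIGCP (with $\alpha=2$), and then combine Theorem~\ref{thm:compute coef colored} with Corollary~\ref{cor:approx}. One small slip to repair: the chain ``$|V(H)|\leq 2|E(H)|\leq 2i$'' does not follow, since you just argued $|E(H)|\geq i$; the correct reason that $|V(H)|\le 2i$ is simply that the $i$-edge set $S$ in your expansion satisfies $V(S)=V(H)$, whence $|V(H)|\le 2|S|=2i$.
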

\begin{rem}
The constant $0.34$ may be replaced by $0.45$ if $\Delta\geq 3$ and by $0.54$ if $\Delta$ is large enough; 
see \cite{BS14a}.
\end{rem}
\begin{proof}
Let $J$ be the all ones matrix.
For $z\in \C$, let $B^e(z):=J+z(A^e-J)$ and let $\mathcal{A}'(z):=(B^e(z))_{e\in E}$.
Define a univariate polynomial $q$ by
\begin{equation}\label{eq:spin pol}
q(G)(z)=k^{-|V|}p(G)(\mathcal{A}'(z)). 
\end{equation}
Then $q(G)(0)=1$ and $q(G)(1)=k^{-|V|}p(G)(\mathcal{A})$. 
Barvinok and Sober\'on \cite[Theorem 1.6]{BS14a} showed that there exists a constant $\delta>0$ such that $q(G)(z)\neq 0$ for all $z$ satisfying $|z|\leq 1+\delta$.

We will show that for any $n$-vertex graph $G$ of maximum degree at most $\Delta$, we can compute the first $m$ inverse power sums of $q(G)$ in time $(n/ \varepsilon)^{O(1)}$, where $m = C\ln(n / \varepsilon)$ and $C = C(1, 1+ \delta)$ is the constant in Corollary~\ref{cor:approx}.
Noting that the degree of $q(G)$ is at most $|E|\leq n \Delta/2$, Corollary~\ref{cor:approx} implies we can compute a multiplicative $\varepsilon$-approximation to $q(G)(1)$ in time $(n/ \varepsilon)^{O(1)}$. 
So it remains to show that we can compute the first $m$ inverse power sums of $q(G)$ in time $(n/\eps)^{O(1)}$. 

We will show that we can interpret $q$ as an edge-colored BIGCP, cf. Subsection~\ref{subsec:color}.
Suppose $G$ has $\ell$ edges, labeled $1,\ldots,\ell$. Color the edge labeled $i$ with color $i$ for $i=1,\ldots,\ell$.
By definition, $q(G)(z)$ satisfies
\begin{align}
q(G)(z) &= k^{-n}\sum_{\phi:V\to[k]}\prod_{e=\{u,v\}\in E}(J+(z(A^{e}-J)))_{\phi(u),\phi(v)}\nonumber
\\
&=k^{-n}\sum_{i=0}^{|E|} z^i\bigg( \sum_{\substack{F\subseteq E\\|F|=i}}\sum_{\phi:V\to[k]}\prod_{e=\{u,v\}\in F}(A^{e}-J)_{\phi(u),\phi(v)}\bigg).	\label{eq:pf G(F)}
\end{align} 

For a subset $F$ of $E$, define $G[F]$ to be the edge-colored graph induced by the edges in $F$. 
The vertex set of $G[F]$ consists of those vertices incident with edges in $F$ and hence has size at most $2|F|$.
Then we see that the coefficient of $z^i$ in \eqref{eq:pf G(F)} can be written as follows:
\begin{align}
k^{-n}&\sum_{\substack{H\in \G_{2i}\\ |E(H)|=i}}k^{n-|V(H)|}\bigg( \sum_{\phi:V(H)\to[k]}\prod_{e=\{u,v\}\in E(H)}(A^{e}-J)_{\phi(u),\phi(v)}\bigg)\ind_{ec}(H,G)\nonumber
\\
=&\sum_{\substack{H\in \G_{2i}\\ |E(H)|=i}}\bigg(k^{-|V(H)|} \sum_{\phi:V(H)\to[k]}\prod_{e=\{u,v\}\in E(H)}(A^{e}-J)_{\phi(u),\phi(v)}\bigg)\ind_{ec}(H,G),	\label{eq:spin ind}
\end{align}
where we interpret $\G_{2i}$ as the collection of edge-colored graphs on at most $2i$ vertices.
This shows how to extend $q$ to edge-colored graphs.
The inner sum in \eqref{eq:spin ind} can be computed in time $O(k^{|V(H)|})$, and clearly $q$ has constant term equal to $1$ and it is multiplicative.
This implies that the extension of $q$ to edge-colored graphs is a BIGCP (with constant $\alpha=2$ and $\beta =k$) and so Theorem \ref{thm:compute coef colored} implies that we can compute the first $m$ inverse power sums of $q$ in time bounded by $O(n/\eps)^{O(1)}$.
This finishes the proof.
\end{proof}

\subsection{Partition functions of graph homomorphisms with multiplicities}
Let $G=(V,E)$ be graph. 
Suppose that for each $e\in E$ we have a symmetric $k\times k$-matrix $A^e$. 
Let us write $\mathcal{A}=(A^e)_{e\in E}$.
Let $n=|V|$ and let $\mu=(\mu_1,\ldots,\mu_k)$ with $\mu_i\in \Z_{\geq 1}$ for each $i$ be such that that $\sum_{i=1}^k \mu_i=n$.
We call such $\mu$ a \emph{composition} of $n$ in $k$ parts.
Barvinok and Sober\'on \cite{BS14b} define the \emph{partition function of graph homomorphisms with multiplicities $\mu$} as
\begin{equation}
p_\mu(G)(\mathcal{A})=\sum_{\substack {\phi:V\to [k]\\ |\phi^{-1}(i)|=\mu_i}}\prod_{e=\{u,v\}\in E}A^{e}_{\phi(u),\phi(v)}.
\label{eq:part mult}
\end{equation}
We refer to \cite{BS14b} for more details and background on this type of partition function.

Building on a result from Barvinok and Sober\'on \cite[Section 2]{BS14b} and using exactly the same proof as above we directly establish the following:
\begin{theorem}
Let $\Delta,k\in \N$. Then there exists a deterministic algorithm, which, given a graph $G=(V,E)$ of maximum degree at most $\Delta$, a composition $\mu$ of $|V|$ in $k$ parts, symmetric $k\times k$ matrices $A^e$, $e\in E$ such that $|A^e_{i,j}-1|\leq 0.1/\Delta$ for all $i,j=1,\ldots,k$ and $e\in E$, and $\eps>0$, computes an $\eps$-approximation to $p_\mu(G)(\mathcal{A})$ in time $(|V|/\eps)^{O(1)}$.
\end{theorem}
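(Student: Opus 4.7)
The plan is to follow the proof of Theorem~\ref{thm:edge color spin} almost verbatim, with the normalizing constant replaced by a multinomial coefficient. For $z\in\C$ set $B^e(z):=J+z(A^e-J)$, $\mathcal A'(z):=(B^e(z))_{e\in E}$, and define
\[
q_\mu(G)(z):=\binom{n}{\mu_1,\ldots,\mu_k}^{-1}p_\mu(G)(\mathcal A'(z)),
\]
where $n:=|V(G)|$. Since $J$ has every entry equal to $1$, $p_\mu(G)(\mathcal J)$ is exactly the number of maps $\phi:V\to[k]$ with $|\phi^{-1}(j)|=\mu_j$, i.e.\ $\binom{n}{\mu_1,\ldots,\mu_k}$, so $q_\mu(G)(0)=1$; also $q_\mu(G)(1)$ equals $p_\mu(G)(\mathcal A)$ up to the multinomial factor, which is computable in polynomial time, and so a multiplicative $\eps$-approximation to one yields one to the other. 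The degree of $q_\mu(G)$ is at most $|E|\le n\Delta/2$. By the result of Barvinok and Sober\'on in \cite[Section 2]{BS14b} (the sharper constant $0.1/\Delta$ is precisely what is needed) there is a constant $\delta>0$ depending only on $\Delta$ and $k$ such that $q_\mu(G)(z)\ne 0$ whenever $|z|\le 1+\delta$, uniformly in $G$ of maximum degree at most $\Delta$ and in compositions $\mu$ of $n$ in $k$ parts. Corollary~\ref{cor:approx} applied at $t=1$ with $M=1+\delta$ then reduces the problem to computing the first $m=C\ln(n/\eps)$ inverse power sums of $q_\mu(G)$ in time $(n/\eps)^{O(1)}$.

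Next I would mirror the expansion \eqref{eq:pf G(F)}--\eqref{eq:spin ind}. After labelling the edges of $G$ with distinct colors from $\N$ as in Subsection~\ref{subsec:color}, expanding each factor $(J+z(A^e-J))_{\phi(u),\phi(v)}$ and grouping by the subset $F\subseteq E$ on which the non-constant summand is taken, the coefficient of $z^i$ in $p_\mu(G)(\mathcal A'(z))$ becomes
\[
\sum_{\substack{F\subseteq E\\|F|=i}}\sum_{\psi:V(F)\to[k]}\binom{n-|V(F)|}{\mu_1-m_1(\psi),\ldots,\mu_k-m_k(\psi)}\prod_{e=\{u,v\}\in F}(A^e-J)_{\psi(u),\psi(v)},
\]
where $m_j(\psi):=|\psi^{-1}(j)|$ and the multinomial factor counts extensions of $\psi$ to the isolated vertices respecting $\mu$. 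Grouping by the edge-colored isomorphism type of $G[F]$ and dividing by $\binom{n}{\mu_1,\ldots,\mu_k}$ yields the $i$-th coefficient of $q_\mu(G)$ in the form $\sum_H\lambda_{H,i}^{(n,\mu)}\ind_{ec}(H,G)$, where $H$ ranges over edge-colored graphs on at most $2i$ vertices with exactly $i$ edges, and each $\lambda_{H,i}^{(n,\mu)}$ is computable in time $O(k^{|V(H)|}\cdot\mathrm{poly}(n))=(n/\eps)^{O(1)}$ for $i\le m$.

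The main obstacle --- and the only genuine deviation from the proof of Theorem~\ref{thm:edge color spin} --- is that $G\mapsto q_\mu(G)$ is \emph{not} multiplicative over disjoint unions, because of the fixed composition: instead one obtains only a convolution $p_\mu(G_1\sqcup G_2)=\sum_{\mu^1+\mu^2=\mu}p_{\mu^1}(G_1)p_{\mu^2}(G_2)$ (up to rescaling). Consequently Lemma~\ref{lem:additivity} does not apply verbatim to conclude that the inverse power sums of $q_\mu(G)$ are supported on \emph{connected} edge-colored induced subgraphs, which is the crucial structural input for the BIGCP algorithm. To repair this I would exploit the falling-factorial identity
\[
\binom{n-s}{\mu_1-m_1,\ldots,\mu_k-m_k}\bigg/\binom{n}{\mu_1,\ldots,\mu_k}=\frac{\prod_{j=1}^{k}\mu_j^{\underline{m_j}}}{n^{\underline{s}}},\qquad s=\sum_j m_j,
\]
(with $x^{\underline{j}}$ the falling factorial) to split each $\lambda_{H,i}^{(n,\mu)}$ into a leading term that factors multiplicatively over the connected components of $H$ (the dominant piece $\prod_{v\in V(H)}\mu_{\psi(v)}/n$) together with explicit lower-order corrections in $1/n$ that are polynomials in the local component statistics of $H$. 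This decomposition --- essentially contained in \cite[Section 2]{BS14b} --- restores a variant of additivity for $\ln q_\mu$ that is strong enough to rerun the proofs of Lemma~\ref{lem:additivity} and Theorem~\ref{thm:compute coef colored}, showing that the inverse power sums of $q_\mu(G)$ can be written as $\sum_{H\in\mathcal C_{\alpha k}(G)}a_{H,k}\ind_{ec}(H,G)$ with $a_{H,k}$ computable in $(n/\eps)^{O(1)}$ time. Feeding the resulting first $m$ inverse power sums into Corollary~\ref{cor:approx} then yields the required multiplicative $\eps$-approximation in time $(n/\eps)^{O(1)}$. I expect the hardest part of the write-up to be this last additivity check: verifying that after the $1/n^{\underline{s}}$ rewriting, the non-leading pieces in the falling-factorial decomposition really contribute trivially to each inverse power sum on disconnected $H$.
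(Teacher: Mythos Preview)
Your opening setup --- defining $q_\mu(G)(z)=\binom{n}{\mu}^{-1}p_\mu(G)(\mathcal A'(z))$, invoking the zero-free region from \cite[Section~2]{BS14b}, and reducing via Corollary~\ref{cor:approx} to computing the first $m=C\ln(n/\eps)$ inverse power sums --- is exactly what the paper intends: its entire ``proof'' is the one-line assertion that ``using exactly the same proof as above'' (i.e.\ the proof of Theorem~\ref{thm:edge color spin}) suffices. So through that point you and the paper agree.

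Where you diverge is that you have noticed something the paper does not address: the polynomial $q_\mu$ is \emph{not} multiplicative over disjoint unions, because the multiplicity constraint gives a convolution $p_\mu(G_1\cup G_2)=\sum_{\mu^1+\mu^2=\mu}p_{\mu^1}(G_1)p_{\mu^2}(G_2)$ rather than a product. Concretely, the coefficients $\lambda_{H,i}^{(n,\mu)}$ you derive depend on the global parameter $n$ through the multinomial ratio, and one checks directly that $\lambda_{H_1\cup H_2}^{(n,\mu)}\neq\lambda_{H_1}^{(n,\mu)}\lambda_{H_2}^{(n,\mu)}$ (falling factorials satisfy $x^{\underline{a+b}}=x^{\underline a}(x-a)^{\underline b}$, not $x^{\underline a}x^{\underline b}$). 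So the BIGCP hypothesis of Theorem~\ref{thm:compute coef colored} genuinely fails, and with it the route through Lemma~\ref{lem:additivity} to connectedness of the support. The paper's ``exactly the same proof'' silently assumes this obstacle away.

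Your proposed repair, however, is not yet an argument. Separating out the leading piece $\prod_{v}\mu_{\psi(v)}/n$ does give a multiplicative polynomial (it is precisely the vertex-weighted spin partition function $P(G)(\mathcal A'(z),x)$ at $x_j=\mu_j/n$), but the lower-order corrections in the falling-factorial expansion are honest nonzero terms, and there is no mechanism in sight --- certainly not Lemma~\ref{lem:additivity}, which needs universal coefficients independent of $n$ --- forcing them to vanish on disconnected $H$. Your final sentence already flags this as the hard part; as written it is a hope rather than a proof. A cleaner line of attack, if you pursue this, might be to work directly with the multiplicative vertex-weighted model $P(G)(\mathcal A'(z),x)$ and extract $p_\mu$ from it, rather than trying to force $q_\mu$ itself into the BIGCP framework.
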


\section{Partition functions of edge-coloring models}\label{sec:edge}
In this section we state and prove a generalization of Theorem \ref{thm:part edge}.
It is along the same lines as the generalization of Theorem \ref{thm:part spin} in the previous section.
The proof also goes along the same line, but as we will see below there are some details that are different.

\subsection{Partition functions for vertex-colored graphs}
Let $G=(V,E)$ be a graph. 
Suppose that we have $k$-color edge-coloring models $h^v$ for each $v\in V$. Let us write $\mathcal{H}=(h^v)_{v\in V}$.
Often the pair $(G,\mathcal H)$ is called a \emph{signature grid}, cf. \cite{CHL10,CLX11,CGW13}.
Then we can extend the definition of the partition function of an edge-coloring model as follows:
\begin{equation}
p(G)(\mathcal{H})=\sum_{\phi:E\to [k]}\prod_{v\in V} h^{v}(\phi(\delta(v))).
\end{equation}
We will refer to $p(G)(\mathcal{H})$ as the \emph{partition function} of $\mathcal{H}$. 
Clearly, if all $h^v$ are equal we obtain the ordinary partition function of an edge-coloring model.
It is also called the \emph{Holant problem} of the signature grid $(G,\mathcal{H})$ cf. \cite{CHL10,CLX11,CGW13}.
We have the following result, which implies Theorem \ref{thm:part edge}.
\begin{theorem}\label{eq:holant}
Let $\Delta,k\in \N$. Then there exists a deterministic algorithm, which, given a graph $G=(V,E)$ of maximum degree at most $\Delta$, $k$-color edge-coloring models $h^v$, $v\in V$, that satisfy $|h^v(\phi)-1|\leq 0.35/(\Delta+1)$ for all $\phi\in \N^k$ and $v\in V$, and $\eps>0$, computes a multiplicative $\eps$-approximation to $p(G)(\mathcal{H})$ in time $(|V|/\eps)^{O(1)}$.
\end{theorem}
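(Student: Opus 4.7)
The plan is to mirror the proof of Theorem~\ref{thm:edge color spin}, using vertex colors in place of edge colors. Define the univariate polynomial
\begin{equation*}
q(G)(z) \;:=\; k^{-|E|} \sum_{\phi:\,E\to[k]}\, \prod_{v \in V}\bigl[1 + z(h^v(\phi(\delta(v))) - 1)\bigr],
\end{equation*}
so that $q(G)(0) = 1$ and $q(G)(1) = k^{-|E|}\, p(G)(\mathcal{H})$. A multiplicative $\eps$-approximation to $q(G)(1)$, scaled by the positive real $k^{|E|}$, yields a multiplicative $\eps$-approximation to $p(G)(\mathcal{H})$. Under the hypothesis $|h^v(\phi) - 1| \leq 0.35/(\Delta + 1)$, the zero-freeness results of the second author \cite{R15} (which underlie the constant $0.35$ in the statement) give an absolute $\delta > 0$ such that $q(G)(z) \neq 0$ for every $z$ with $|z| \leq 1 + \delta$.

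Next I would show that $q$ is a BIGCP in the vertex-colored setting of Subsection~\ref{subsec:color}. Expanding the product over $v$ as a sum over the set $S \subseteq V$ of vertices for which the summand $z(h^v(\phi(\delta(v)))-1)$ is selected, and separately summing $\phi$ over edges with no endpoint in $S$ (which contributes the factor $k^{|E|-|E(S)|}$), one obtains
\begin{equation*}
q(G)(z) \;=\; \sum_{S \subseteq V} z^{|S|}\, k^{-|E(S)|}\, F(S), \qquad
F(S) \;:=\; \sum_{\phi:\, E(S) \to [k]}\, \prod_{v \in S}\bigl(h^v(\phi(\delta(v))) - 1\bigr),
\end{equation*}
where $E(S)$ is the set of edges of $G$ with at least one endpoint in $S$. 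Color each vertex $v$ of $G$ by an element of $\N$ encoding the pair $(h^v, d_G(v))$. The key observation is that both $|E(S)|$ and $F(S)$ depend only on the vertex-colored induced subgraph $G[S]$: the number of boundary edges at $v \in S$ equals $d_G(v) - d_{G[S]}(v)$, which is read off from the color of $v$ together with $G[S]$, and for each $v \in S$ the colors of these boundary edges may be summed independently of everything outside $S$ (each boundary edge has only one endpoint in $S$). Hence the coefficient of $z^i$ in $q(G)$ is a linear combination $\sum_{H}\lambda_{H,i}\,\ind_{vc}(H, G)$ with $\lambda_{H,i}$ intrinsic to the vertex-colored graph $H$ (supported on colored graphs with exactly $i$ vertices). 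Multiplicativity of $q$ and $q(G)(0)=1$ are immediate, and each $\lambda_{H,i}$ is computable in time $k^{O(\Delta\, |V(H)|)}$ by brute-force enumeration of the colorings of the internal edges and boundary half-edges prescribed by $H$; thus $q$ is a BIGCP with $\alpha = 1$ and $\beta = k^{O(\Delta)}$.

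With these two ingredients the theorem follows exactly as in Section~\ref{sec:hom}: Theorem~\ref{thm:compute coef colored} computes the first $m = C\ln(n/\eps)$ inverse power sums of $q(G)$ in time $(n/\eps)^{O(1)}$, where $C = C(1, 1+\delta)$ is the constant from Corollary~\ref{cor:approx}, and since $\deg q(G) \leq n$ that corollary then yields a multiplicative $\eps$-approximation to $q(G)(1)$ in the same time. The main obstacle I anticipate is the encoding of external information into the vertex coloring: without including $d_G(v)$ in the color of $v$, the local contribution $F(S)$ would depend on the embedding of $G[S]$ into $G$ (through the number of boundary half-edges) rather than on the colored isomorphism type of $G[S]$ alone, which would destroy the BIGCP structure and prevent the application of Theorem~\ref{thm:compute coef colored}.
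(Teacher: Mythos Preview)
Your proposal is correct, and it takes a genuinely different (and in fact more economical) route than the paper.

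The paper does \emph{not} reduce to the colored BIGCP framework. Instead, it explicitly introduces \emph{fragments}: pairs $(H,\kappa)$ where $H$ is a vertex-colored graph and $\kappa:V(H)\to\{0,\ldots,\Delta\}$ records the number of ``half edges'' sticking out at each vertex. The paper then defines $\ind^*(F,G)$ for fragments $F$, reproves the product rule~\eqref{eq:product of ind} and the additivity lemma (Lemma~\ref{lem:additivity}) for fragments, and finally reproves Theorem~\ref{thm:compute coef} in this fragment setting (Theorem~\ref{thm:compute coef frag}). In the paper's words, ``instead of directly applying Theorem~\ref{thm:compute coef} we will have to do a little more work.''

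Your key observation is that a fragment $(H,\kappa)$ is nothing more than a vertex-colored graph once one folds $\kappa$ into the color: by coloring each $v$ with the pair $(h^v,d_G(v))$, the induced colored subgraph $G[S]$ determines $\kappa(v)=d_G(v)-d_{G[S]}(v)$ and hence the whole fragment $G(S)$, so that $|E(S)|$ and $F(S)$ become intrinsic to the colored isomorphism type of $G[S]$. This lets you invoke Theorem~\ref{thm:compute coef colored} off the shelf, with $\alpha=1$ and $\beta=k^{O(\Delta)}$, and no parallel theory of fragments is needed. The one point you leave implicit is how to extend $q$ to \emph{all} colored graphs so that multiplicativity makes sense; the natural extension (attach $d_v-d_{G'}(v)$ half-edges to each vertex of a colored graph $G'$ and take the normalized partition function of the resulting fragment) is multiplicative over disjoint unions and agrees with your coefficient formula, since the fragment induced on $S$ inside $F(G')$ coincides with $F(G'[S])$.

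What each approach buys: yours is shorter and reuses existing machinery verbatim; the paper's fragment formalism makes the half-edge structure explicit and keeps Section~\ref{sec:edge} self-contained, at the cost of essentially redoing the proof of Theorem~\ref{thm:compute coef}.
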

\begin{rem}
The constant $0.35$ may be replaced by $0.47$ if $\Delta\geq 3$ and by $0.56$ if $\Delta$ is large enough; see \cite{R15}.
Moreover, for readers familiar with the orthogonal group invariance of these partition functions one can use Corollary 6b from \cite{R15} to find a larger family of edge-coloring models for which the partition function can be efficiently approximated.
\end{rem}
\begin{proof}
Let $J$ denote the constant ones function $J:\N^k\to \C$ (defined by $J(\phi)=1$ for all $\phi\in \N^k$). Let for $z\in \C$, $g^v(z):=J+z(h^v-J)$ and let $\mathcal{H}(z):=(g^v(z))_{v\in V}$.
Consider the following univariate polynomial:
\begin{equation}
q(G)(z):=k^{-|E|}p(G)(\mathcal{H}(z)).\label{eq:define pol}
\end{equation}
Observe that $q(G)(1)=k^{-|E|}p(G)(\mathcal H)$ and that $q(G)$ is a polynomial of degree at most $n:=|V|$.
So, just as in the previous section, the problem of approximating the partition function $p(G)(\mathcal H)$ is replaced by approximating an evaluation of a univariate polynomial.

By Corollary 6a from \cite{R15} (which is valid for multigraphs) there exists $\delta>0$ such that $q(G)(z)\neq 0 $ whenever $|z|\leq 1+\delta$.
We will show (in Theorem~\ref{thm:compute coef frag}) that for any $n$-vertex graph $G$ of maximum degree at most $\Delta$, we can compute the first $m$ inverse power sums of $q(G)$ in time $(n/ \varepsilon)^{O(1)}$, where $m = C\ln(n / \varepsilon)$ and $C = C(1, 1+ \delta)$ is the constant in Corollary~\ref{cor:approx}.
Noting that the degree of $q(G)$ is at most $n$, 
Corollary~\ref{cor:approx} implies we can compute a multiplicative $\varepsilon$-approximation to $q(G)(1)$ in time $(n/ \varepsilon)^{O(1)}$. 

Ideally we would like to do this using Theorem \ref{thm:compute coef} just as in the proof of Theorem \ref{thm:edge color spin}.
Since partition functions of edge-coloring models are multiplicative, the polynomial $q$ is also multiplicative and it has constant term equal to $1$.
So to be able to apply Theorem \ref{thm:compute coef} we need only check that the coefficients of $q$ can be expressed as linear combinations of (colored) induced graph counts.
This is in fact proved in \cite{R15} if all $h^v$ are equal, but there it is not clear whether the coefficients $\lambda_{H,i}$ in \eqref{eq:ind coef} can be computed efficiently.
So instead of directly applying Theorem \ref{thm:compute coef} we will have to do a little more work, which we postpone to the next section. 
\end{proof}

\subsection{Computing coefficients of $q(G)(z)$}\label{subsec:coef}
By definition,  
\begin{align}
q(G)(z) &= k^{-|E|}\sum_{\phi:E\to[k]}\prod_{v\in V}(J+ z(h^{v} -J))(\phi(\delta(v)))\nonumber
\\
&= k^{-|E|}
\sum_{i=0}^n z^i\bigg( \sum_{\substack{U\subseteq V\\|U|=i}}\sum_{\phi:E\to[k]}\prod_{u\in U}(h^{u}-J)(\phi(\delta(u)))\bigg)\nonumber
\\
 &=k^{-|E|}
\sum_{i=0}^n z^i\bigg( \sum_{\substack{U\subseteq V\\|U|=i}}k^{|E\setminus E(U)|}\sum_{\phi:E(U)\to[k]}\prod_{u\in U}(h^{u}-J)(\phi(\delta(u)))\bigg),
\label{eq:pf G(U)}
\end{align}  
where for $U\subseteq V$, $E(U)$ denotes the set of edges of $G$ that are incident with at least one vertex of $U$.

The second sum inside the brackets of the third line of \eqref{eq:pf G(U)} is almost the partition function of $(h^u-J)_{u\in U}$, except that the pair $(U,E(U))$ may not actually be a graph as some of the edges of $E(U)$ are not spanned by $U$. 
We will refer to such a graph-like structure as a \emph{fragment} and the edges in $E(U)$ that are `sticking out' (i.e. not spanned by $U$) as \emph{half edges}. 
Formally, a \emph{fragment}, is a pair $(H,\kappa)$, where $H$ is a vertex-colored graph and where $\kappa$ is a map $\kappa:V(H)\to \{0,1,\ldots,\Delta\}$, which records the number of half edges incident with each vertex. 
Suppose $G$ has $n$ vertices labeled $1,\ldots,n$. From now on we will consider the graph $G$ as a vertex-colored graph, where vertex $i$ gets color $i$ for $i=1,\ldots,n$.
For $U\subseteq V$ we let $G(U)$ be the fragment $(G[U],\kappa)$ where $\kappa(u)$ is equal to the number of edges that connect $u$ with $V\setminus U$.
Note that the graph $G$ itself can be thought of as a fragment by taking the map $\kappa:V(G)\to \{0,\ldots,\Delta\}$ to be $\kappa(v)=0$ for all $v\in V(G)$.

Clearly, for each $U$ of size $i$ the expression inside the second sum of the third line of \eqref{eq:pf G(U)} only depends on the isomorphism class of the fragment $G(U)$. (An \emph{isomorphism} from a fragment $(H,\kappa)$ to a fragment $(H',\kappa')$ is an isomorphism $\alpha$ of the underlying vertex-colored graphs and such that for each $u\in V(H)$, $\kappa(u)=\kappa'(\alpha(u))$.)
For a fragment $F=(H,\kappa)$ let $E(F)$ denote the set of edges of $F$ including half edges and let $V(F)$ denote the vertex set of the underlying graph $H$.
Assume that for each $i=1,2,\ldots$ we have a $k$-color edge coloring-model $h^i$ and let $\mathcal{H}=(h^i)_{i\geq 1}$ Then define, 
\begin{equation}
p(F)(\mathcal{H}):=\sum_{\phi:E(F)\to [k]}\prod_{v\in V(F)}h^{v}(\phi(\delta(v))).
\end{equation}
Here we implicitly identify the the color of a vertex of $H$ with the vertex itself.
Define for a fragment $F=(H,\kappa)$, $\ind^*(F,G)$ to be the number of sets $U$ of size $|V(F)|$ such that $G(U)$ is isomorphic to $F$.
Writing $\mathcal H-J=(h^i-J)_{i\geq 1}$, we can rewrite \eqref{eq:pf G(U)} as
\begin{align}
q(G)(z) &=k^{-|E|}\sum_{i=0}^nz^i\bigg(\sum_{\substack{F=(H,\kappa)\\ |V(H)|=i}} k^{|E|-|E(F)|}p(F)(\mathcal H-J)\ind^*(F,G)\bigg) \nonumber
\\
&=\sum_{i=0}^nz^i\bigg(\sum_{\substack{F=(H,\kappa)\\ |V(H)|=i}} k^{-|E(F)|}p(F)(\mathcal H-J)\ind^*(F,G)\bigg),\label{eq:fragment}
\end{align}
where the sum runs over fragments. This shows how to extend $q$ to vertex-colored graphs.
Let us denote the coefficient of $z^i$ in \eqref{eq:fragment} by $e_i$.
In \cite{R15} it is proved that in case all $h^i$ are equal, $\ind^*(F,G)$ can be expressed as a linear combination of the parameters $\ind(H,G)$ for certain graphs $H$. 
As mentioned above, the coefficients in this expression may not be easy to compute (at least we do not know how to do this).
So we will have to work with the parameters $\ind^*(F,\cdot)$ instead.
This is not a severe problem, since essentially if we replace $\ind$ in \eqref{eq:ind coef} by $\ind^*$, then Theorem \ref{thm:compute coef} remains valid.
Indeed, we have the following theorem.
\begin{theorem}\label{thm:compute coef frag}
Let $C>0$ and $\Delta\in \N$.
Then there is a deterministic $(n/\eps)^{O(1)}$-time algorithm, which, 
given any $n$-vertex graph $G$ of maximum degree at most $\Delta$ and any $\eps>0$, computes the inverse power sums $p_1,\ldots,p_m$ of $q(G)$ for $m=C\ln(n/\eps)$.
\end{theorem}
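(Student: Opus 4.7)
The plan is to adapt the proof of Theorem~\ref{thm:compute coef colored} line by line, replacing induced colored-subgraph counts $\ind_{vc}(H,\cdot)$ with the fragment counts $\ind^*(F,\cdot)$. By \eqref{eq:fragment}, the polynomial $q(G)$ is multiplicative on vertex-colored graphs, has constant term equal to $1$, and its $i$th coefficient is a linear combination of $\ind^*(F,\cdot)$ over fragments $F$ with $|V(F)|\leq i$, where the scalar attached to $F$ is $\lambda_{F,i}:=k^{-|E(F)|}p(F)(\mathcal{H}-J)$. Consequently the Newton recursion \eqref{eq:newton}, combined with induction, gives $p_k=\sum_{F : |V(F)|\leq k}a_{F,k}\ind^*(F,\cdot)$, and so it suffices to (a) show that $a_{F,k}=0$ unless the underlying vertex-colored graph of $F$ is connected, (b) enumerate those connected fragments of order $\leq m$ that actually occur in $G$, and (c) implement the recursion efficiently.

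For (a), the additivity argument from Lemma~\ref{lem:additivity} carries over verbatim once one establishes the product formula $\ind^*(F_1,\cdot)\cdot\ind^*(F_2,\cdot)=\sum_F c^F_{F_1,F_2}\ind^*(F,\cdot)$, where $c^F_{F_1,F_2}$ counts ordered pairs $(S_1,S_2)$ with $S_1\cup S_2=V(F)$ such that the sub-fragment of $F$ induced on $S_j$ is isomorphic to $F_j$. Here the induced sub-fragment of $F=(H,\kappa)$ on $S\subseteq V(H)$ has underlying graph $H[S]$ and half-edge count $\kappa_S(v)=\kappa(v)+|N_H(v)\setminus S|$ for $v\in S$; setting this bookkeeping up correctly is really the only subtlety in the proof. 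The additivity of $p_k$ then forces $a_{F,k}$ to vanish on disconnected fragments by the same downward induction used in Lemma~\ref{lem:additivity}.

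For (b), observe that a fragment $F$ occurs as $G(U)$ for some connected $U\subseteq V(G)$ iff the underlying vertex-colored graph $G[U]$ is connected, in which case $F$ is fully determined by $U$. Hence Lemma~\ref{lem:enumerate} already enumerates, in time $(n/\eps)^{O(1)}$, all relevant fragments of order $\leq\alpha m$ with $\alpha=1$; fragment isomorphism reduces to colored graph isomorphism plus a trivial check on $\kappa$, so Lemma~\ref{lem:connected count}(i) can be adapted at no asymptotic cost to de-duplicate the list and, in turn, to compute $\ind^*(F,G)$ for any single connected fragment $F$ as in Lemma~\ref{lem:connected count}(ii). The scalars $\lambda_{F,i}$ are computed directly from their definition by brute force over $\phi:E(F)\to[k]$ in time $k^{|E(F)|}\leq k^{\Delta|V(F)|}=(n/\eps)^{O(1)}$, which gives the analog of condition (ii) in the BIGCP definition with $\beta=k^{\Delta}$.

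With (a)--(c) in place, the proof of Lemma~\ref{lemma:compute} transfers without change: one computes $a_{F,k}$ inductively in $k$ by using \eqref{eq:newton} and the product formula above to read off the coefficient of $\ind^*(F,\cdot)$ in $p_{k-i}e_i$, and then forms $p_k=\sum_F a_{F,k}\ind^*(F,G)$. The total running time is $(n/\eps)^{O(1)}$, as in Theorem~\ref{thm:compute coef colored}. The main obstacle, as noted, is purely notational: defining the induced sub-fragment correctly so that the product formula and the additivity lemma extend verbatim; once this is done, no new ideas beyond those in Section~\ref{sec:coef} are needed.
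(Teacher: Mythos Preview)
Your proposal is correct and follows essentially the same approach as the paper's own proof: the paper likewise defines the induced sub-fragment $F(S)$ exactly as you do (writing $\alpha(s)=\deg_H(s)-\deg_{H[S]}(s)+\kappa(s)$, which is your $\kappa(v)+|N_H(v)\setminus S|$), establishes the product formula \eqref{eq:prod ind fragment}, extends the additivity lemma to fragments, enumerates connected fragments via $\mathcal{T}_\ell$, and computes the scalars $k^{-|E(F)|}p(F)(\mathcal H-J)$ by brute force in time $O(k^{\Delta\ell})$. The only minor addition in the paper is that it explicitly extends $\ind^*(F_1,\cdot)$ to take fragments (not just graphs) as its second argument, which is needed to run the induction in the additivity lemma; you implicitly have this once the induced sub-fragment is defined.
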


The proof of Theorem \ref{thm:compute coef frag} follows the same line as the proof of Theorem \ref{thm:compute coef}.
Essentially we need to replace graphs by fragments in the proof and check that everything remains valid.
For completeness we will give the proof.

We first need to note that for a fragment $F_1=(H_1,\kappa_1)$ the graph parameter $\ind^*(F_1,\cdot)$ can be extended to the collection of all fragments as follows: for a fragment $F_2=(H_2,\kappa_2)$ we let $\ind^*(F_1,F_2)$ denote the number of sets $S\subseteq V(H_2)$ such that $H_1$ is isomorphic to $H_2[S]$ as vertex-colored graphs and such that for each vertex $v$ of $H_1$ we have that the number of neighbours of $v$ in $V(H_2)\setminus S$ is equal to $\kappa_1(v)-\kappa_2(v)$.
Then for two fragments $F_1$ and $F_2$ we have
\begin{equation}
\ind^*(F_1,\cdot)\cdot\ind^*(F_2,\cdot)=\sum_{F} c^F_{F_1,F_2} \ind^*(F,\cdot),	\label{eq:prod ind fragment}
\end{equation}
where the sum runs over all fragments $F$ and where for a fragment $F$, $c^F_{F_1,F_2}$ denotes the number of pairs of subsets $(S,T)$ of $V(F)$ such that $S\cup T=V(F)$ and $F_1=F(S)$ and $F_2=F(T)$. (Here $F(S)$ is the fragment induced by $S$, i.e., if $F=(H,\kappa)$, then $F(S)=(H[S],\alpha)$ where for $s\in S$ we set $\alpha(s)=\deg_H(s)-\deg_{H[S]}(s) +\kappa(s)$.)
We call a fragment $F=(H,\kappa)$ \emph{connected} if the graph $H$ is connected.
We now adapt some of the statements and proofs of the results in Section \ref{sec:coef} to include fragments.

We start with some definitions.
By $\F$ we denote the collection of all fragments and by $\F_k$ for $k\in \N$ we denote the collection of fragments with at most $k$ vertices. 
For two fragments $F_1=(H_1, \kappa_1)$ and $F_2=(H_2, \kappa_2)$, 
$F_1 \cup F_2:= (H, \kappa)$, where $H = H_1 \cup H_2$ and 
$\kappa: V(H_1 \cup V(H_2) \to \{0,1, \ldots, \Delta\}$ is the 
map whose restriction to $V(H_1)$ is $\kappa_1$ and whose 
restriction to $V(H_2)$ is $\kappa_2$.  
An \emph{invariant of fragments} is a function $f:\F\to S$ for some set $S$ that takes the same value on isomorphic fragments.
Call an invariant of fragments $f$ \emph{multiplicative} if $f(\emptyset)=1$ and  $f(F_1\cup F_2)=f(F_1)f(F_2)$ for all fragments $F_1,F_2$ .
The maximum degree of a fragment $F=(H,\kappa)$ is equal to the maximum of $\deg(v)+\kappa(v)$ over $v\in V(G)$.

\begin{lemma}\label{lem:connected count frag}
Let $F=(H,\kappa)$ be a connected fragment on $k$ vertices and let $\Delta\in \N$. Then there is an $O(k^2n^{2}\Delta^{2(k-1)})$-time algorithm, which, given any $n$-vertex fragment $\hat F$ with maximum degree at most $\Delta$, computes the number $\ind^*(F,\hat F)$.
\end{lemma}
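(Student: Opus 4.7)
The plan is to adapt the proof of Lemma~\ref{lem:connected count}(ii) to the setting of fragments. Write $F = (H,\kappa_F)$ and $\hat F = (\hat H, \kappa_{\hat F})$. Since $\hat F$ has maximum degree at most $\Delta$, meaning $\deg_{\hat H}(v) + \kappa_{\hat F}(v) \leq \Delta$ for every $v \in V(\hat H)$, in particular the underlying graph $\hat H$ has maximum degree at most $\Delta$. This is the property that drove the bound in Lemma~\ref{lem:connected count}, so the basic embedding enumeration will carry over unchanged.

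First I would order the vertices $v_1, \ldots, v_k$ of $H$ so that for every $i \geq 2$ the vertex $v_i$ has a neighbour among $v_1, \ldots, v_{i-1}$; this is possible because $H$ is connected. Then I would enumerate candidate maps $f : V(H) \to V(\hat H)$ by first choosing $f(v_1)$ among the $n$ vertices of $\hat H$, and inductively choosing $f(v_i)$ among the at most $\Delta$ neighbours in $\hat H$ of a previously placed $f(v_j)$ with $v_j$ adjacent to $v_i$ in $H$. This produces a list of at most $n\Delta^{k-1}$ candidates.

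For each candidate $f$ I would perform two local checks. The \emph{color-isomorphism check} verifies that $f$ is injective, that $f(v)$ has the same color as $v$ for every $v \in V(H)$, and that $\hat H[f(V(H))]$ is isomorphic to $H$ via $f$ (i.e.\ $uv \in E(H)$ iff $f(u)f(v) \in E(\hat H)$). The \emph{half-edge check} verifies, for each $v \in V(H)$, that the number of neighbours of $f(v)$ in $V(\hat H) \setminus f(V(H))$ equals $\kappa_F(v) - \kappa_{\hat F}(f(v))$. Each of these runs in time $O(k^2 + k\Delta)$ per candidate. After discarding failing candidates and deduplicating the resulting list by image set $f(V(H))$ via pairwise comparison, the number of remaining entries is $\ind^*(F, \hat F)$.

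The cost is dominated by deduplication, which requires $O(k^2)$ time to compare a pair of candidates and so runs in total time $O(k^2 n^2 \Delta^{2(k-1)})$, matching the claimed bound. The adaptation is essentially routine: the only new ingredient compared with the uncolored setting is the pair of local checks for vertex colors and for the half-edge counts, both of which are cheap and independent for each candidate, so I do not anticipate a serious obstacle.
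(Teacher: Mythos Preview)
Your proposal is correct and follows essentially the same approach as the paper: the paper's proof simply says to apply the argument of Lemma~\ref{lem:connected count} to the underlying graphs and then discard any candidate embedding that violates the vertex-coloring constraints or the $\kappa$-constraints, which is precisely what you have spelled out in detail. Your explicit observation that $\deg_{\hat H}(v)\le \deg_{\hat H}(v)+\kappa_{\hat F}(v)\le\Delta$ is the one point the paper leaves implicit, and your cost analysis (dominated by the $O(k^2 n^2\Delta^{2(k-1)})$ deduplication step) matches the stated bound.
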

Note that Lemma~\ref{lem:connected count frag} enables us to test for isomorphism of fragments between bounded degree fragments when $|V(F)| = |V(\hat{F})|$.
\begin{proof}
This follows immediately from the proof of Lemma \ref{lem:connected count}.
We apply the proof of Lemma \ref{lem:connected count} to the underlying graphs and then remove any potential embedding that either violates the vertex coloring constraints or the constraints that $\kappa$ imposes.
\end{proof}

We call an invariant of fragment $f:\F\to \C$ \emph{additive} if for each $F_1,F_2\in \F$ we have  $f(F_1\cup F_2)=f(F_1)+f(F_2)$. 
The following variation of a lemma due to Csikv\'ari and Frenkel \cite{CF12} has exactly the same proof as Lemma \ref{lem:additivity}; one just needs to replace graph by fragment everywhere in the proof.
\begin{lemma}\label{lem:additivity frag}
Let $f:\F\to \C$ be an invariant of fragments given by $f(\cdot):=\sum_{F\in \F}a_F\ind^*(F,\cdot)$. 
Then $f$ is additive if and only if $a_F=0$ for all fragments $F$ that are disconnected. 
\end{lemma}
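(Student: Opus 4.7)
The plan is to mirror the proof of Lemma~\ref{lem:additivity} line by line, replacing graphs with fragments and $\ind(\cdot,\cdot)$ with $\ind^*(\cdot,\cdot)$. The whole argument will rest on two structural facts about induced fragment counts: (a) for a connected fragment $F$, the invariant $\ind^*(F,\cdot)$ is itself additive on disjoint unions, and (b) for every fragment $F$, $\ind^*(F,F)\geq 1$.

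For the "if" direction, I first verify (a). Given fragments $F_1,F_2$, consider a set $S\subseteq V(F_1)\cup V(F_2)$ whose induced sub-fragment in $F_1\cup F_2$ is isomorphic to a connected fragment $F$. By connectivity of the underlying graph of $F$, $S$ must lie entirely in $V(F_1)$ or entirely in $V(F_2)$. Moreover, since $F_1\cup F_2$ has no edges between $V(F_1)$ and $V(F_2)$, the half-edge labels $\kappa_{F_1\cup F_2}$ restricted to $V(F_i)$ coincide with $\kappa_{F_i}$, so the induced fragment $(F_1\cup F_2)(S)$ equals $F_i(S)$. This gives $\ind^*(F,F_1\cup F_2)=\ind^*(F,F_1)+\ind^*(F,F_2)$, so $\ind^*(F,\cdot)$ is additive. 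Additivity is preserved by linear combinations, so if $a_F=0$ for every disconnected $F$, then $f$ is additive.

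For the "only if" direction, I argue by induction on $k:=|V(F)|$. Suppose $f$ is additive. Using the "if" direction, we may subtract off the connected part and assume $a_F=0$ for all connected fragments $F$, so the remaining task is to show $a_F=0$ for every disconnected $F$. The induction hypothesis says $a_{F'}=0$ for all disconnected $F'$ with fewer than $k$ vertices. Now write $F=F_1\cup F_2$ with both $F_i$ nonempty. On one hand, by additivity,
\[
f(F)=f(F_1)+f(F_2)=\sum_{F':\,|V(F')|\geq k} a_{F'}\bigl(\ind^*(F',F_1)+\ind^*(F',F_2)\bigr)=0,
\]
since $|V(F_i)|<k$ forces each $\ind^*(F',F_i)$ to vanish. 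On the other hand, evaluating $f(F)$ directly and using that only $F'$ with $|V(F')|\geq k$ can embed into $F$ with $|V(F)|=k$, we get $f(F)=a_F\ind^*(F,F)$, which by (b) is a nonzero multiple of $a_F$. Hence $a_F=0$, completing the induction.

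I do not anticipate any real obstacle: the only delicate point is checking (a), where one has to confirm that the half-edge labels do not obstruct the splitting of embeddings, and this follows immediately from the definition of disjoint union of fragments. Everything else is a direct transcription of the proof of Lemma~\ref{lem:additivity}.
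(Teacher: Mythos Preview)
Your proposal is correct and follows exactly the approach the paper intends: the paper states that the proof is identical to that of Lemma~\ref{lem:additivity} with graphs replaced by fragments, and your write-up carries this out carefully, including the verification that the half-edge data behaves correctly under disjoint union (your fact~(a)) and that $\ind^*(F,F)\neq 0$ (your fact~(b)).
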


We now sketch the proof of Theorem \ref{thm:compute coef frag}.
\subsubsection{Proof of Theorem \ref{thm:compute coef frag}}
Let $\zeta_1,\ldots,\zeta_d\in \C$ be the roots of the polynomial $q(G)$ and recall that for $\ell\in \N$, $p_\ell$ is the $\ell$th inverse power sum of the $\zeta_i$. Here $d$ denotes the degree of $q(G)=\sum_{i=0}^d e_iz^i$, which is at most $n$.
By \eqref{eq:fragment}, for $i\geq 1$, the $e_i$ can be expressed as linear combinations of induced fragments counts of fragments with at most $\ell$ vertices. Since $e_1=-p_1$, this implies that the same holds for $p_1$.
By induction, \eqref{eq:prod ind fragment} and \eqref{eq:newton} (using that $e_0=1$) we have that for each $\ell$
\begin{equation}\label{eq:power to ind frag}
p_\ell=\sum_{F\in \F_{\ell}}a_{F,\ell}\ind^*(F,G),
\end{equation}
for certain, yet unknown, coefficients $a_{F,\ell}$.

Since $q$ is multiplicative, the inverse power sums are additive. Thus Lemma \ref{lem:additivity frag} implies that $a_{F,\ell}=0$ if $F$ is not connected. 
Denote by $\mathcal{C'}_{\ell}(G)$ the set of connected fragments $F$ of order at most $\ell$ such that $\ind^*(F,G)\neq 0$.
This way we can rewrite \eqref{eq:power to ind frag} as follows:
\begin{equation}\label{eq:power support frag}
p_\ell=\sum_{F\in \mathcal{C'}_\ell(G)}a_{F,\ell}\ind^*(F,G).
\end{equation}
The next lemma says that we can compute the coefficients $a_{F,\ell}$ efficiently for $\ell=1,\ldots,m$, where $m=C\ln(n/ \eps)$.
\begin{lemma}\label{lemma:compute frag}
There is an $O(n/\eps)^{O(1)}$-time algorithm, which given an $n$-vertex graph $G$ and $\varepsilon > 0$, computes and lists the coefficients $a_{F,\ell}$ in \eqref{eq:power support frag} for all $F\in \mathcal{C'}_{\ell}(G)$ and all $\ell=1,\ldots,m = C \ln(n / \varepsilon)$.
\end{lemma}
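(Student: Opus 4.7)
The plan is to follow the template of the proof of Lemma \ref{lemma:compute}, substituting fragments for ordinary graphs throughout. The three ingredients remain unchanged: enumerate the relevant connected structures of bounded size, set up the Newton-type recurrence for the inverse power sums, and use the algebra of induced counts (now via \eqref{eq:prod ind fragment}) to solve for the coefficients $a_{F,\ell}$ inductively.

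First I would enumerate $\mathcal{C'}_\ell(G)$ for every $\ell \leq m$. Applying Lemma \ref{lem:enumerate} to the underlying vertex-colored graph $G$, I can list in time $(n/\eps)^{O(1)}$ all $U \subseteq V(G)$ with $|U| \leq m$ inducing a connected subgraph; each such $U$ determines the fragment $G(U)$, whose half-edge function $\kappa$ records the number of neighbors of each vertex lying in $V \setminus U$. By Lemma \ref{lem:connected count frag} I can test fragment-isomorphism in time polynomial in $n/\eps$, so after deduplication I obtain a list of representatives for each $\mathcal{C'}_\ell(G)$, together with the values $\ind^*(F,G)$.

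Next I compute the $a_{F,\ell}$ inductively in $\ell$. Using $e_0 = 1$ the Newton identity \eqref{eq:newton} gives $p_\ell = -\ell e_\ell - \sum_{i=1}^{\ell-1} e_i p_{\ell-i}$. By \eqref{eq:fragment} and \eqref{eq:power to ind frag}, both $e_i$ and $p_{\ell-i}$ are linear combinations of $\ind^*(\cdot,G)$ over fragments of bounded order, and by \eqref{eq:prod ind fragment} their product expands in the same basis. Concretely, having stored the $a_{F',\ell'}$ for all $\ell' < \ell$, the coefficient of $\ind^*(F,\cdot)$ in $e_i p_{\ell-i}$ equals
\[
\sum_{\substack{(S,T)\\S\cup T = V(F)}} a_{F(T),\,\ell-i}\; k^{-|E(F(S))|}\, p(F(S))(\mathcal{H}-J).
\]
There are at most $4^{|V(F)|} = (n/\eps)^{O(1)}$ pairs; for each, $p(F(S))(\mathcal{H}-J)$ is evaluated by brute force over its at most $k^{|E(F(S))|} \leq k^{O(\Delta m)} = (n/\eps)^{O(1)}$ colorings, while the lookup of $a_{F(T),\ell-i}$ reduces to a fragment isomorphism test against $\mathcal{C'}_{\ell-i}(G)$ via Lemma \ref{lem:connected count frag}. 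Summing over $i$ and over $F \in \mathcal{C'}_\ell(G)$, and bounding $|\mathcal{C'}_\ell(G)| = (n/\eps)^{O(1)}$ through Lemma \ref{lem:graph count} applied to $G$, produces every $a_{F,\ell}$ in time $(n/\eps)^{O(1)}$.

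The main obstacle, compared with Lemma \ref{lemma:compute}, is not conceptual but lies in replacing the black-box coefficients $\lambda_{H,i}$ of the BIGCP framework by the explicit partition-function weights $k^{-|E(F(S))|} p(F(S))(\mathcal{H}-J)$ and checking that these remain computable in polynomial time. Since $|E(F(S))|$ includes half-edges and can grow as $O(\Delta m) = O(\Delta \ln(n/\eps))$, this is precisely the point where bounded degree of $G$ is essential: it keeps $k^{|E(F(S))|}$ polynomial in $n/\eps$. One must also verify that the product expansion \eqref{eq:prod ind fragment} respects the half-edge data, i.e.\ that reconstructing $F$ from $F(S)$ and $F(T)$ uses the induced-fragment operation $F(\cdot)$ rather than plain vertex-induced subgraphs; this is built into the definition, so the recursion closes and the argument goes through.
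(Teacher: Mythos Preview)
Your proposal is correct and follows essentially the same route as the paper's proof: enumerate the connected induced fragments $G(U)$ via Lemma~\ref{lem:enumerate}, then solve the Newton recurrence \eqref{eq:newton} for the $a_{F,\ell}$ using the product rule \eqref{eq:prod ind fragment}, evaluating the weights $k^{-|E(F(S))|}p(F(S))(\mathcal H-J)$ by brute force in time $k^{O(\Delta m)}=(n/\eps)^{O(1)}$ and looking up $a_{F(T),\ell-i}$ by fragment isomorphism tests. The only cosmetic discrepancy is that the paper restricts the sum over pairs $(S,T)$ to $|S|=i$ (since $e_i$ is supported on fragments with exactly $i$ vertices), and notes that in the vertex-colored setting each $U$ already yields a distinct fragment so no deduplication is required; neither point affects correctness.
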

\begin{proof}
Using the algorithm of Lemma \ref{lem:enumerate}, we first compute the sets $\mathcal{T}_{\ell}$ consisting of all subsets $S$ of $V(G)$ such that $|S|\leq \ell$ and $G[S]$ is connected, for $\ell=1\ldots, m$.
This takes time bounded by $(n/\eps)^{O(1)}$.
The collection $\mathcal{C'}_{\ell}(G)$ can be obtained from $\mathcal{T}_\ell$ by looking for each $S\in \mathcal{T}_\ell$ and each $v\in S$ how many neighbours $v$ has in $G\setminus S$.
So the total time to compute and list the $\mathcal{C'}_{\ell}(G)$ is bounded by $(n/\eps)^{O(1)}$.

To prove the lemma, let us fix $\ell\leq m$ and show how to compute the coefficients $a_{F,\ell}$, assuming that  we have already computed and listed the coefficients $a_{F,\ell'}$ for all $\ell'<\ell$.
Let us fix $F\in \mathcal{C'}_{\ell}(G)$.
By the identities (\ref{eq:newton}), it suffices to compute the coefficient of $\ind^*(F,\cdot)$ in $e_{i}p_{\ell-i}$ for $i=1,\ldots,\ell$ (where we set $p_0=1)$.
By \eqref{eq:fragment}, \eqref{eq:prod ind fragment} and \eqref{eq:power to ind frag} we know that the coefficient of $\ind^*(F,\cdot)$ in $e_{i}p_{\ell-i}$ is given by
\begin{align*}\label{eq:coef ep frag}
&\sum_{\substack{F_ 1\in \F_i\\|V(F_1)|=i}}\sum_{F_2\in \F_{\ell-i}} c^F_{F_1,F_2}a_{F_2,(\ell-i)}\cdot \frac{p(F_1)(\mathcal{H}-J)}{k^{|E(F_1)|}}
\\
=&\sum_{\substack{S,T\subseteq V(F)\\ S \cup T = V(F)\\|S|=i, |T|\leq 
\ell-i}} a_{F(T),(\ell-i)}\cdot \frac{p(F(S))(\mathcal{H}-J)}{k^{|E(F(S))|}}.
\end{align*}
For each such pair $(S,T)$, we need  to compute $p(F(S))(J-\mathcal{H})k^{-|E(F(S))|}$ and look up $ a_{F(T), (\ell-i)}$.
We can compute $p(F(S))(J-\mathcal{H})k^{-|E(F(S))|}$ in time bounded by $O(k^{\Delta \ell})=(n/\eps)^{O(1)}$.

Looking up $a_{F(T),(\ell-i)}$ in the given list requires us to test isomorphism of $F(T)$ with each fragment in $\mathcal{C'}_{\ell-i}(G)$ (noting that $a_{F(T),(\ell-i)}=0$ if $F(T) \not \in \mathcal{C'}_{(\ell-i)}(G)$ by Lemma~ \ref{lem:additivity frag}). Using Lemma~\ref{lem:connected count frag} to test for isomorphism, this takes time at most 
\[O(|\mathcal{C'}_{(\ell-i)}(G)|(\ell-i)^2\Delta^{2(\ell-i-1)})=O(n/\eps)^{O(1)}.
\]
Here we use Lemma~\ref{lem:graph count} to bound $|\mathcal{C'}_{(\ell-i)}(G)|\leq |\mathcal{T}_{(\ell-i)}(G)|$.
Together, all this implies that the coefficient of $\ind^*(F,\cdot)$ in $p_{\ell-i}e_{i}$ can be computed in time bounded by $(n/\eps)^{O(1)}$, and so the coefficient $a_{F,\ell}$ can be computed in time $(n/\eps)^{O(1)}$.
Thus all coefficients $a_{F,\ell}$ for $F\in \mathcal{C'}_{\ell}(G)$ can be computed and listed in time bounded by  $|\mathcal{C'}_{\ell}(G)|(n/\eps)^{O(1)}=(n/\eps)^{O(1)}$.
This can be done for each $\ell = 1, \ldots, m$ in time $(n/\eps)^{O(1)}$.
\end{proof}
To finish the proof of the theorem, we compute $p_\ell$ for each $\ell=1,\ldots, m$ by adding all the numbers $a_{F,\ell}\ind^*(F,G)$ over all $F\in \mathcal{C'}_{\ell}(G)$. This can be done in time 
\[
O(m|\mathcal{C'}_{m}(G)|n^2\Delta^{2(m-1)})=(n/\eps)^{O(1)},
\] 
where we have used that computing $\ind^*(F,G)$ with $F\in \mathcal{C'}_{\ell}(G)$ takes time bounded by $O(n^2\Delta^{2(m-1)})$ by Lemma~\ref{lem:connected count frag}.
This finishes the proof.

\section{Concluding remarks and open questions}\label{sec:conclude}
In this paper we have presented a direct connection between the absence of complex roots for a large class of graph polynomials (BIGCPs) and the existence of (deterministic) algorithms to efficiently approximate evaluations of these polynomials.
We have illustrated its use by giving deterministic polynomial-time approximation algorithms for evaluations of the Tutte polynomial, the independence polynomial and graph polynomials obtained from spin and edge-coloring models at complex numbers on bounded degree graphs. 

As is noted in the introduction Theorem~\ref{thm:ind general} does not allow us to efficiently approximate the independence polynomial at $\lambda$ for $\lambda^*\leq \lambda <\lambda_c$, while this can be done with the correlation decay approach cf. Weitz \cite{W6}.
However, confirming a conjecture of Sokal \cite{S1}, Peters and the second author \cite{PR17} proved the following:
\begin{theorem}\label{ques:sokal}
Let $\eps>0$ and $\Delta\in \N$. Then there exists $\delta>0$ such that for any graph $G=(V,E)$ of maximum degree at most $\Delta$, and $\lambda_v\in \C$ for $v\in V$ that satisfy
\begin{equation}
|\Im (\lambda_v)|\leq \delta\text{, and }0\leq \Re(\lambda)\leq (1-\eps) \frac{(\Delta-1)^{\Delta-1}}{(\Delta-2)^\Delta}
\end{equation}
for all $v\in V$, we have $Z(G)((\lambda_v)_{v\in V})\neq0$.
\end{theorem}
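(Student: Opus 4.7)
The plan is to prove Theorem~\ref{ques:sokal} by combining Weitz's self-avoiding walk tree reduction with a complex-dynamical analysis of the associated tree recursion. The overall strategy is classical for proving zero-freeness of the independence polynomial, but the novelty lies in handling complex activities $\lambda_v$ that perturb the entire sub-critical interval up to $(1-\eps)\lambda_c(\Delta)$, where $\lambda_c(\Delta) = (\Delta-1)^{\Delta-1}/(\Delta-2)^\Delta$.

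First I would reduce to trees. Given $G=(V,E)$ and a fixed vertex $v_0$, Weitz's construction yields a tree $T = T_{\text{SAW}}(G,v_0)$ of maximum degree at most $\Delta$, together with pinnings of certain leaves, such that the occupation ratio
\[
R_{v_0}^G := \lambda_{v_0}\,\frac{Z(G-N[v_0])}{Z(G-v_0)}
\]
coincides with the analogous ratio $R_{v_0}^T$ on the tree. Pinning a leaf only removes it and does not introduce activities outside the prescribed strip. Using the identity $Z(G) = (1+R_{v_0})\,Z(G-v_0)$ and peeling off vertices one at a time, nonvanishing of $Z(G)((\lambda_v)_v)$ reduces to the statement that $R_v$ stays away from $-1$ at every internal node of every self-avoiding walk tree produced during the peeling.

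On a tree rooted at $v$ with children $u_1,\ldots,u_d$, the recursion reads
\[
R_v \;=\; \lambda_v \prod_{i=1}^{d}\frac{1}{1+R_{u_i}},
\]
with $d \leq \Delta-1$ away from the root. The core of the proof is to exhibit, for each $\lambda_0 \in [0,(1-\eps)\lambda_c(\Delta)]$, a closed set $U_{\lambda_0}\subset\C\setminus\{-1\}$ and a number $\delta_0>0$ such that the map $F_\lambda(z_1,\ldots,z_d)=\lambda\prod_i (1+z_i)^{-1}$ sends $U_{\lambda_0}^d$ into $U_{\lambda_0}$ whenever $|\lambda-\lambda_0|\leq\delta_0$ and $d\leq\Delta-1$. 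A natural candidate for $U_{\lambda_0}$, motivated by Sokal's conjecture and Weitz's potential-function argument, is a region carved out by simultaneous bounds on $|\arg(1+z)|$ and on $|z|$, whose parameters depend on $\lambda_0$ and $\eps$. The geometric point is that $z\mapsto(1+z)^{-1}$ rotates $\arg(1+z)$ in a controlled way, and multiplying by a nearly real $\lambda$ perturbs the resulting argument by only $O(\delta)$ at each step. A compactness argument over $[0,(1-\eps)\lambda_c(\Delta)]$ then yields a single $\delta>0$ that works uniformly for all admissible $\lambda_v$.

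The hard part will be constructing the invariant region $U_{\lambda_0}$ and verifying its invariance rigorously. At $\lambda_0=\lambda_c(\Delta)$ the real tree recursion admits a neutral fixed point: the Jacobian has spectral radius exactly $1$, so the contraction margin vanishes and no open complex neighborhood of the real interval can be forward-invariant. This is precisely why the factor $(1-\eps)$ is unavoidable. The region $U_{\lambda_0}$ should therefore shrink in its imaginary extent as $\lambda_0\to\lambda_c(\Delta)$, and invariance must be checked by a computation that promotes Weitz's real strict contraction, established via the potential $\Phi(x) = \log x / \sqrt{x(1+x)}$ or an equivalent, to a complex contraction preserving the argument constraints defining $U_{\lambda_0}$. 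This complex extension of the real contraction argument is the technical heart of the theorem and the principal obstacle to making the plan rigorous.
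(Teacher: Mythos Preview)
The paper does not contain a proof of this theorem. It is stated in the concluding section as a result due to Peters and the second author in \cite{PR17}, quoted here only to explain how, combined with the methods of Section~\ref{subsec:clawfree}, one obtains an alternative proof of Weitz's result. There is therefore no proof in the paper to compare your proposal against.

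That said, your outline is the right shape for the argument carried out in \cite{PR17}: reduce to trees via Weitz's self-avoiding walk construction, analyze the complex tree recursion $R_v=\lambda_v\prod_i(1+R_{u_i})^{-1}$, and produce a forward-invariant region avoiding $-1$, with uniformity in $\lambda$ obtained by compactness over $[0,(1-\eps)\lambda_c]$. You also correctly isolate the genuine difficulty, namely that at $\lambda_c$ the recursion has a neutral fixed point, so any invariant complex neighbourhood must degenerate as $\lambda_0\to\lambda_c$, which is why the $(1-\eps)$ is essential. One point to be careful about: the SAW-tree identity equates occupation \emph{ratios}, and to deduce nonvanishing of $Z(G)$ from control of these ratios you implicitly need that the ratios are well defined (denominators nonzero) throughout the peeling; this is usually handled by an induction on $|V|$ together with the invariant-region statement, and you should make that induction explicit. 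Beyond that, your plan is a faithful sketch of the approach, but the actual construction and verification of the invariant region is the entire content of \cite{PR17} and is not reproduced in the present paper.
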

Now combining this result with the approach in Section \ref{subsec:clawfree}, it follows that with the methods in this paper we can efficiently approximate the independence polynomial at $\lambda$ for $\lambda <\lambda_c$, thereby giving a different proof of Weitz's result.

Let us restate another conjecture of Sokal \cite[Conjecture 21]{J06}, which, if true, would by the methods of the present paper imply that we have an efficient algorithm for approximately counting the number of $(\Delta+1)$-colorings in any graph of maximum degree at most $\Delta$.
\begin{ques}
Let $\Delta\in \N$. Is it true that $Z_T(G)(-1,q)\neq0$ for every $q\in \C$ satisfying $\Re(q)>\Delta$ and every graph $G$ of maximum degree at most $\Delta$?
\end{ques}

This connection between absence of complex roots and efficient approximation algorithms naturally leads to the question of how hard it is to approximate evaluations of these graph polynomials close to (complex) roots. 
In light of this we remark that some progress on this question has been made.
As mentioned in the introduction, there exists a sequence of trees $T_n$ of maximum degree at most $\Delta$ and $\lambda_n<-\lambda^*$ with $\lambda_n\to -\lambda^*$ such that $Z(T_n,\lambda_n)=0$. This was utilized by Galanis, Goldberg and \v{S}tefankovi\v{c}, to show that it is NP hard to approximate $Z_G(\lambda)$ when $\lambda<-\lambda^*(\Delta)$.

Another question that arises naturally is the following. Barvinok \cite{B14a,B16} found quasi-polynomial-time approximation algorithms for computing the permanent of certain matrices, based on absence of zeros. 
Our method for computing inverse power sums of BIGCPs on bounded degree graphs presented in Section \ref{sec:coef} does not seem to apply to permanents of general matrices. 
It would be very interesting to find a more general method that also applies to permanents.

Our algorithmic results in Section~\ref{sec:coef} can be interpreted as giving a fixed parameter tractability result for determining $\ind(H,G)$ for certain graphs $H$. 
If $G$ has bounded degree, the algorithm runs in time $2^{O(|V(H)|}|V(G)|^{O(1)}$.
However, the algorithm only works for graphs $H$ for which $\ind(H,\cdot)$ are coefficients of a multiplicative graph polynomial. Very recently, we were able to extend the algorithm to all graphs $H$; see \cite{PaR17}.
A natural question is whether our approach can be extended to other classes of graphs such as planar graphs for example. More concretely, let us state the following question.
\begin{ques}
Is there an algorithm, which given a planar (or more generally, bounded genus) graph $G$ and $k\in \N$, returns the number of independent sets of size $k$ of $G$ in time bounded by $2^{O(k)}|V(G)|^{O(1)}$? 
\end{ques}

\section*{Acknowledgements}
We thank Alexander Barvinok for stimulating discussions, useful remarks and for sharing the results in \cite{B16PC} with us.
We thank Andreas Galanis for informing us about \cite{Sr15}. 
We are grateful to Pinyan Lu for some useful remarks on an earlier version of this paper.

We moreover thank the anonymous referees for helpful comments and suggestions, improving the presentation of the paper.


\begin{thebibliography}{50}
\bibitem{BGKNT7}M. Bayati, D. Gamarnik, D. Katz, C. Nair and P. Tetali, Simple deterministic approximation algorithms for counting matchings. In {\sl Proceedings of the thirty-ninth annual ACM symposium on Theory of computing} (pp. 122--127), ACM, 2007.

\bibitem{B14a}A. Barvinok, Computing the permanent of (some) complex matrices, {\sl Foundations of Computational Mathematics} (2014) 1--14.

\bibitem{B14b}A. Barvinok, Computing the partition function for cliques in a graph, {\sl Theory of Computing} {\bf 11} (2015), Article 13 pp. 339--355

\bibitem{B15}A. Barvinok, Computing the partition function of a polynomial on the Boolean cube, arXiv preprint, {\sl arXiv:1503.07463} (2015).

\bibitem{B16}A. Barvinok, Approximating permanents and hafnians, {\sl Discrete Analysis}, 2017:2, 34 pp.

\bibitem{B16PC}A. Barvinok, Personal communication (2016).

\bibitem{Babook}A. Barvinok, {\sl Combinatorics and Complexity of Partition Functions} Vol. 30 of Algorithms and Combinatorics, Springer, 2017.

\bibitem{BS14a}A. Barvinok and P. Sober\'on, Computing the partition function for graph homomorphisms, {\sl Combinatorica} (2016). doi:10.1007/s00493-016-3357-2.

\bibitem{BS14b}A. Barvinok and P. Sober\'on, Computing the partition function for graph homomorphisms with multiplicities, {\sl Journal of Combinatorial Theory, Series A} {\bf 137} (2016) 1--26.


\bibitem{BCKL12}C. Borgs, J. Chayes, J. Kahn and L. Lov\'asz, Left and right convergence of graphs with bounded degree, {\sl Random Structures and  Algorithms} {\bf 42} (2013) 1--28.

\bibitem{BDGJ99}R. Bubley, M. Dyer, C. Greenhill and M. Jerrum: On approximately counting colorings of small degree graphs, {\sl SIAM Journal on Computing} {\bf29} (1999) 387--400.

\bibitem{BG05}A. Bulatov and M. Grohe, The complexity of partition functions, {\sl Theoretical Computer Science} {\bf 348} (2005) 148--186.

\bibitem{CCL13}J. Cai, X. Chen and P. Lu, Graph homomorphisms with complex values: A dichotomy theorem, {\sl SIAM Journal on Computing} {\bf 42} (2013) 924--1029.

\bibitem{CGW13}J. Cai, H. Guo and T. Williams, A complete dichotomy rises from the capture of vanishing signatures, In: {\sl Proceedings of the forty-fifth annual ACM symposium on Theory of computing}, pp. 635--644. ACM, 2013.

\bibitem{CHL10}J. Cai, S. Huang and P. Lu, From Holant to \#CSP and Back: Dichotomy for Holant{$^c$} Problems. In ISAAC, pages 253--265, 2010.


\bibitem{CLX11}J. Cai, P. Lu and M. Xia, Computational complexity of Holant problems, {\sl SIAM Journal on Computing} {\bf40} (2011) 1101--1132.

\bibitem{CS7}M. Chudnovsky and P. Seymour, The roots of the independence polynomial of a clawfree graph, {\sl Journal of Combinatorial Theory, Series B} {\bf 97} (2007) 350--357.

\bibitem{CF12}P. Csikv\'ari and P. E. Frenkel, Benjamini--Schramm continuity of root moments of graph polynomials, {\sl European Journal of Combinatorics} {\bf 52} (2016) 302--320.

\bibitem{HJ}P. de la Harpe, and V.F.R. Jones, Graph invariants related to statistical mechanical models:
examples and problems, {\sl Journal of Combinatorial Theory}, Series B {\bf 57} (1993) 207--227.


\bibitem{DG00a}M. Dyer and C. Greenhill, On Markov chains for independent sets, {\sl Journal of Algorithms} {\bf35} (2000) 17--49.


\bibitem{DG00} M. Dyer and C. Greenhill, The complexity of counting graph homomorphisms, {\sl Random Structures and Algorithms} {\bf 17} (2000) 260--289.


\bibitem{GGS16}A. Galanis, L.A. Goldberg, D. \v{S}tefankovi\v{c}, Inapproximability of the independent set polynomial below the Shearer threshold, arXiv preprint, arXiv:1612.05832 [cs.CC], 2016.

\bibitem{GSVY14}{A. Galanis, D. \v{S}tefankovi\v{c}, E. Vigoda and L. Yang: Ferromagnetic Potts model, Refined \#BIS-hardness and related results. In RANDOM 2014, LNCS 6845, pp. 677--691 2014. Full version available at \texttt{http://arxiv.orgabs/1311.4839}.}

\bibitem{GK12}D. Gamarnik and D. Katz, Correlation decay and deterministic FPTAS for counting list-colorings of a graph, {\sl Journal of Discrete Algorithms} {\bf12} (2012) 29--47.

\bibitem{GGN14}D. Garijo, A. Goodall and J. Ne{\v{s}}et{\v{r}}il, Jaroslav, On the number of $B$-flows of a graph, {\sl European Journal of Combinatorics} {\bf 35} (2014) 273--285.

\bibitem{GG14}L.A. Goldberg and H. Guo, The complexity of approximating complex-valued Ising and Tutte partition functions, arXiv preprint arXiv:1409.5627 (2014).

\bibitem{GJ12}L.A. Goldberg  and M. Jerrum, Approximating the partition function of the ferromagnetic Potts model, {\sl Journal of the ACM} {\bf 59} (2012) 1--25.

\bibitem{GJ12a}L.A. Goldberg, and M. Jerrum, The complexity of computing the sign of the Tutte polynomial (and consequent \#P-hardness of approximation) In {\sl Automata, Languages, and Programming}, pp. 399--410, Springer Berlin Heidelberg, 2012.

\bibitem{HSV16}N.J. Harvey, P. Srivastava and J. Vondr\'ak, Computing the independence polynomial in Shearer's region for the LLL, arXiv preprint, arXiv:1608.02282, 2016.

\bibitem{J06}B. Jackson, Zeros of chromatic and flow polynomials of graphs, Journal of Geometry {\bf 76} (2003) 76--95.

\bibitem{JPS13}B. Jackson, A. Procacci and A.D. Sokal, Complex zero-free regions at large $|q|$ for multivariate Tutte polynomials (alias Potts-model partition functions) with general complex edge weights, {\sl Journal of Combinatorial Theory, Series B} {\bf 103} (2013) 21--45.

\bibitem{J95}M. Jerrum, 
A very simple algorithm for estimating the number of $k$-colorings of a low-degree graph, 
{\sl Random Structures and Algorithms} {\bf 7} (1995) 157--165. 

\bibitem{JS93}M. Jerrum and A. Sinclair, Polynomial-time approximation algorithms for the Ising model, {\sl SIAM Journal on computing} {\bf 22} (1993) 1087--1116.

\bibitem{LY52}T. Lee and T. Yang, Statistical theory of equations of state and phase transitions. I. Theory of condensation, {\sl Physical Review} {\bf 87} (1952): 404.



\bibitem{LLL14}C. Lin, J. Liu and P. Lu, A simple FPTAS for counting edge covers, in: {\sl Proceedings of the Twenty-Fifth Annual ACM-SIAM Symposium on Discrete Algorithms}, pp341--348, SIAM, 2014.

\bibitem{LY13}{P. Lu and Y. Yin, Improved FPTAS for multi-spin systems, in: {\sl  Approximation, Randomization, and Combinatorial Optimization. Algorithms and Techniques}, pp 639--654, Springer Berlin Heidelberg, 2013.}


\bibitem{PaR17}V. Patel, G. Regts, Computing the number of induced copies of a fixed graph in a bounded degree graph, arXiv preprint, arXiv:1707.05186 [cs.DS], 2017.

\bibitem{PR17}H. Peters, G. Regts, On a conjecture of Sokal concerning roots of the independence polynomial, arXiv preprint, arXiv:1701.08049[math.CO], 2017.

\bibitem{R13}G. Regts, {\sl Graph Parameters and Invariants of the Orthogonal Group}, PhD thesis, University of Amsterdam, 2013.

\bibitem{R15}G. Regts, Zero-free regions of partition functions with applications to algorithms and graph limits, {\sl Combinatorica} (2017). doi:10.1007/s00493-016-3506-7.

\bibitem{SS05}A.D. Scott and A.D. Sokal, The repulsive lattice gas, the independent-set polynomial, and the Lov\'asz local lemma, {\sl Journal of Statistical Physics} {\bf118} (2005) 1151--1261.

\bibitem{S98}J.B. Shearer, On a problem of Spencer, {\sl Combinatorica} {\bf 5} (1998) 241--245.

\bibitem{SST14}A. Sinclair, P. Srivastava and M. Thurley, Approximation algorithms for two-state anti-ferromagnetic spin systems on bounded degree graphs, {\sl  Journal of Statistical Physics} {\bf 155} (2014) 666--686.

\bibitem{SS12}A. Sly and N. Sun, The computational hardness of counting in two-spin models on d-regular graphs, in {\sl Proceedings of the 53rd Annual Symposium on Foundations of Computer Science} (FOCS), 2012 IEEE, pp. 361--369. IEEE, 2012.

\bibitem{S1}A. Sokal, A personal list of unsolved problems concerning lattice gases and antiferromagnetic Potts models, {\sl Markov Processes And Related Fields} {\bf 7} (2001) 21--38.

\bibitem{Sr15}P. Srivastava, Approximating the hard core partition function with negative activities. Available at \texttt{http://www.its.caltech.edu/~piyushs/docs/approx.pdf}

\bibitem{S7}B. Szegedy, Edge-coloring models and reflection positivity, {\sl Journal of the American Mathematical Society} {\bf 20} (2007) 969--988.

\bibitem{S10}B. Szegedy, Edge coloring models as singular vertex-coloring models, in: {\sl Fete of Combinatorics and Computer Science} (G.O.H. Katona, A. Schrijver, T.Sz\"onyi, editors), Springer, Heidelberg and J\'anos Bolyai Mathematical Society, Budapest (2010) 327--336.

\bibitem{V00}E. Vigoda,
Improved bounds for sampling colorings, 
{\sl Journal of Mathematical Physics} {\bf 41} (2000), 1555--1569. 


\bibitem{W6}D. Weitz, Counting independent sets up to the tree threshold, in {\sl Proceedings of the
thirty-eighth annual ACM symposium on Theory of computing}, STOC 06, pages 140--149,
New York, NY, USA, 2006. ACM.
\end{thebibliography}
\end{document}